\documentclass{amsart}
\usepackage{amsmath}
\usepackage{amssymb}
\usepackage{color}
\usepackage{mathdots}

\usepackage{graphicx}
\usepackage{epsfig}
\usepackage{multirow}

\definecolor{lightgray}{gray}{0.9}

\DeclareMathOperator{\im}{Im}

\DeclareMathOperator{\E}{E}

\DeclareMathOperator{\Char}{char}

\DeclareMathOperator{\diag}{diag}

\DeclareMathOperator{\tr}{tr}
\DeclareMathOperator{\rk}{rk}

\DeclareMathOperator{\N}{\mathbb{N}}
\DeclareMathOperator{\Z}{\mathbb{Z}}
\DeclareMathOperator{\F}{\mathbb{F}}

\newtheorem{theorem}{Theorem}[section]
\newtheorem{lemma}[theorem]{Lemma}
\newtheorem{proposition}[theorem]{Proposition}
\newtheorem{corollary}[theorem]{Corollary}

\theoremstyle{definition}
\newtheorem{definition}[theorem]{Definition}
\newtheorem{example}[theorem]{Example}
\newtheorem{problem}[theorem]{Problem}

\theoremstyle{remark}
\newtheorem{remark}[theorem]{Remark}
\numberwithin{equation}{section}

\def\separa{\hbox to 14 truecm{\hrulefill}}

\date\today

\author[A. N. Abyzov]{Adel N. Abyzov}
\address{Kazan (Volga Region) Federal University, Kazan, Russia}
\email{Adel.Abyzov@kpfu.ru}

\author[A. R. Chekhlov]{Andrey R. Chekhlov}
\address{Faculty of Mathematics and Mechanics, Tomsk State University, Tomsk, Russia}
\email{cheklov@math.tsu.ru}

\author[P. V. Danchev]{Peter V. Danchev}
\address{Institute of Mathematics and Informatics, Bulgarian Academy of Sciences, Sofia, Bulgaria}
\email{danchev@math.bas.bg}

\author[D. T. Tapkin]{Danil T. Tapkin}
\address{Kazan (Volga Region) Federal University, Kazan, Russia}
\email{danil.tapkin@yandex.ru}

\begin{document}

\title[Rings with Clean-Like Properties ...]{Rings with Clean-Like Properties: \\ Endomorphism, Matrix and Structural Theorems}

\maketitle

\begin{abstract} We investigate three clean-like properties for arbitrary rings, for endomorphism rings of abelian groups and for matrix rings over finite fields. Specifically, we study and establish when a ring is weakly strongly $k$-nil-clean for some fixed natural number $k\geq 2$, when the matrix ring is either quasi $2$-nil-clean or quasi $3$-nil-clean, and when the endomorphism ring is weakly clean.

Our theorems improve substantially on some results due to Goldsmith-V\'amos in Rend. Sem. Mat. Univ. Padova (2007), Breaz {\it et al}. in Linear Algebra \& Appl. (2013), Ko\c{s}an-Zhou in Front. Math. China (2016), Su {\it et al}. in J. Algebra \& Appl. (2027), and some other existing results in this current topic.
\end{abstract}


\bigskip

{\footnotesize \textit{Key words}: quasi clean rings, quasi nil-clean rings, weakly strongly $k$-nil-clean rings, weakly clean rings, endomorphism rings, matrix rings, abelian groups, fields.}

{\footnotesize \textit{2020 Mathematics Subject Classification}: 16S34, 16U60, 20K21.}


\section{Introduction and Principal Facts}

Throughout the text, all rings are associative and unital. As usual, for any such a ring $R$, the symbol $U(R)$ denotes the {\it unit group} of $R$, the symbol $Id(R)$ denotes the set of idempotent elements of $R$, the symbol $nil(R)$ denotes the set of nilpotent elements of $R$, and the symbol $C(R)$ denotes the {\it center} of $R$. Traditionally, the symbol $M_n(R)$ stands for the full {\it matrix ring} over $R$ of natural size $n\geq 2$.

Our main motivating tools in writing up the present article are the following four papers, namely \cite{BCDM13}, \cite{GV}, \cite{Su} and \cite{quasi-clean}. In fact, certain brief historical facts show that in \cite{GV} the authors examine when endomorphism rings of torsion Abelian groups are clean in the sense that every element is a sum of a unit and an idempotent (for some other results of this type in which are concerned mixed group and torsion-free groups, we also suggest sources \cite{C} and \cite{S}). Later on, in \cite{BCDM13}, the authors initiated the study of when a matrix ring over a field is nil-clean in the sense that every element is a sum of a nilpotent and an idempotent, and then apply their main result to study when the endomorphism ring of a finite rank Abelian group is nil-clean (notice that the problem for the case of infinite rank is still open) as well as of an arbitrary Abelian group is strongly nil-clean in the sense that the existing nilpotent and idempotent elements commute with one another (for a publication of this kind, we also propose source \cite{ABD}).

In the other vein, in \cite{quasi-clean} the authors introduced the concept of a {\it quasi-idempotent} in a ring $R$ as being an element $t\in R$ such that $t^2=tv$ with $v\in U(R)\cap C(R)$, and after that the notions of a {\it (strongly) quasi-clean} ring as those rings whose elements are a sum of a unit and a quasi-idempotent (that commute with one another). Some interesting results about the matrix ring are obtained there. Very recently, motivated by this definition and the establishments from \cite{BCDM13}, in \cite{Su} the authors looked at the "nil case" of quasi-cleanness by studying the so-called {\it nil-quasi-clean} matrix rings over fields refining the chief result of \cite{BCDM13} in a new light.

Very recently, in \cite{DM26} was defined the notion of {\it weakly strongly 2-nil-clean} rings as a common expansion of {\it strongly nil-clean} rings, where the latter were completely characterized, respectively, in \cite{BDZ} and \cite{KZ16}, as in the first source only the abelian case was described. Concretely, a weakly strongly $2$-nil-clean ring is the one for which each element is the sum of the difference of two idempotents plus a nilpotent, and it was established in \cite{DM26} that a ring $R$ is weakly strongly $2$-nil-clean if, and only if, $R\cong R_1\times R_2$, where $R_1$ is strongly $2$-nil-clean, and either $R_2=\{0\}$ or $J(R_2)$ is nil and $R_2/J(R_2)\cong \F_5$.

\medskip

Thus, inspired by all of the presented above, we are managing here to explore in-depth when an arbitrary ring is weakly strongly $k$-nil-clean for any $k\geq 2$, as well as when endomorphism and matrix rings are either weakly clean and quasi nil-clean by trying to characterize the corresponding rings of these three directions comprehensively and significantly. In this way, our work is organized into three basic sections, and the major results are arranged as follows: In the second section, we try to discover the structure of weakly clean abelian groups adapting some ring-theoretic techniques (see Theorem~\ref{redcomp} and Proposition~\ref{2-groups}). In the third section, we attack in two subsections how the property quasi nil-cleanness is structured in the case of matrix rings over a former finite field (see Theorems~\ref{lambda-2-N} and \ref{lambda-3-N}, respectively). In the fourth section, we try to look at the more general situation of several proper generalizations of strong nil-cleanness (see Theorem~\ref{tha2}). In the final fifth section, consulting with the established thus far results, we discuss some relevant material that could unify and stimulate a further extensive exploration on the considered three themes.

\section{Weakly Clean Endomorphism Rings of Abelian Groups}

Imitating the terminology from \cite{GV}, we shall say that an abelian group $G$ is {\it weakly clean} if its corresponding endomorphism ring $\E(G)$ is weakly clean in the sense that each element is a sum or a difference of a unit and an idempotent (compare with \cite{AA} and \cite{D} for the corresponding purely ring-theoretic definition of weak cleanness as the firstly cited article is devoted to the commutative case only), that is, each endomorphism of $G$ is a sum or a difference of an invertible endomorphism (= automorphism) of $G$ and an idempotent endomorphism (= projection) of $G$.

\medskip

The following technicality extends the corresponding \cite[Lemma 1]{GV}.

\begin{lemma}\label{direct}
If $A$ is a weakly clean group and $B$ is a clean group, then $A\oplus B$ is a weakly clean group.
\end{lemma}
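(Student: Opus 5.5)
The plan is to imitate the Han--Nicholson corner argument that underlies \cite[Lemma 1]{GV}, keeping track of the sign in the weakly clean decomposition. Put $R=\E(A\oplus B)$ and let $e\in R$ be the projection onto $A$ along $B$. Then $eRe\cong\E(A)$ is weakly clean, $(1-e)R(1-e)\cong\E(B)$ is clean, and every $x\in R$ is written as a Peirce matrix
$$x=\begin{pmatrix} a & b\\ c & d\end{pmatrix},\qquad a\in eRe,\ b\in eR(1-e),\ c\in(1-e)Re,\ d\in(1-e)R(1-e).$$

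The first step is the standard Schur-complement fact, which I would state and check directly. If $u\in U(eRe)$ and $d'-cu^{-1}b\in U((1-e)R(1-e))$, where $u^{-1}$ is the inverse in the corner ring $eRe$, then $\begin{pmatrix} u & b\\ c & d'\end{pmatrix}$ is a unit of $R$. To see this, factor it as
$$\begin{pmatrix} e & 0\\ cu^{-1} & 1-e\end{pmatrix}\begin{pmatrix} u & 0\\ 0 & d'-cu^{-1}b\end{pmatrix}\begin{pmatrix} e & u^{-1}b\\ 0 & 1-e\end{pmatrix}.$$
The two outer factors are unipotent, hence invertible, and the middle factor is a diagonal unit.

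Next, since $eRe$ is weakly clean, $a=u+f$ or $a=u-f$ for some $u\in U(eRe)$ and $f\in Id(eRe)$. In both cases we then choose $g\in Id((1-e)R(1-e))$ and set $E=f+g$. Because $fg=gf=0$, this $E$ is an idempotent of $R$.

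Case $a=u+f$. Write the clean decomposition $d-cu^{-1}b=v+g$ in $(1-e)R(1-e)$. Then
$$x-E=\begin{pmatrix} u & b\\ c & d-g\end{pmatrix},$$
and its Schur complement is $d-g-cu^{-1}b=v$, a unit. By the first step $x-E\in U(R)$, so $x$ is a sum of a unit and an idempotent.

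Case $a=u-f$. Here we use cleanness of the element $-(d-cu^{-1}b)$, writing $-(d-cu^{-1}b)=w+g$. Then $d-cu^{-1}b=-w-g$. Now
$$x+E=\begin{pmatrix} u & b\\ c & d+g\end{pmatrix}$$
has Schur complement $d+g-cu^{-1}b=-w$, again a unit. Hence $x=U-E$ with $U\in U(R)$.

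The only genuinely new point compared with \cite[Lemma 1]{GV} is this second case: the sign forced on $a$ must be matched in the corner for $B$. This is exactly why $B$ must be clean and not merely weakly clean, since cleanness of $(1-e)R(1-e)$ is applied to the negated Schur complement. The rest is a routine verification that the factorisation above is correct in the Peirce calculus, which I expect to cause no difficulty.
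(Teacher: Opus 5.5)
Your proof is correct and is essentially the paper's argument. You write the endomorphism as a $2\times 2$ Peirce matrix, split the corner entry as $a=u\pm f$, and apply cleanness of $\E(B)$ to the Schur complement $d-cu^{-1}b$ with the sign matched. The resulting matrix is then shown to be a unit by a triangular factorisation. Your step of applying cleanness to $-(d-cu^{-1}b)$ is exactly the justification for the fact the paper quotes: in a clean ring every element can be written both as $u+e$ and as $u'-e'$.
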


\begin{proof} It is well known that the endomorphisms of the direct sum $A\oplus B$ may be regarded as matrices of the form $m=\begin{pmatrix} x & y \\ z & w\end{pmatrix}$, where $x\in\mathrm{End}(A)$, $w\in\mathrm{End}(B)$, $y\in\mathrm{Hom}(B, A)$ and $z\in\mathrm{Hom}(A, B)$ (cf. \cite{F1}).

Now, as $A$ is weakly clean, we can write $x = u\pm c$, where $u$ is a unit and $c$ is an idempotent. So, one checks that $w-zu^{-1}y\in\mathrm{End}(B)$. But, since $B$ is clean, we can also write $w-zu^{-1}y=v\pm d$ with $v$ a unit,
$d$ an idempotent, and the sign on $\pm d$ is aligned with the sign on $\pm c$ (this can be done, because in a clean ring every of its elements can simultaneously be written as $u+e$ and as $u'-e'$, where $u,u'$ are units and $e,e'$ are idempotents).

Now, one writes that
$$m=\begin{pmatrix} u & y \\ z & v+zu^{-1}y\end{pmatrix}\pm\begin{pmatrix} c & 0 \\ 0 & d\end{pmatrix},$$
where the second term is obviously an idempotent. We note that the first term is a unit: in fact, pre-multiplying it by $p=\begin{pmatrix} 1 & 0 \\ -zu^{-1} & 1\end{pmatrix}$ and post-multiplying it by
$q=\begin{pmatrix} u^{-1} & -u^{-1}yv^{-1} \\ 0 & v^{-1}\end{pmatrix}$ leads to the identity matrix and, therefore, both $p$, $q$ are invertible, as required.
\end{proof}

However, the direct sum of two weakly clean groups need {\it not} be weakly clean too. In this aspect, the next consequence is worthy of documentation.

\begin{corollary}\label{fullinv}
If $G=A\oplus B$, where $B$ is fully invariant in $G$ and is clean, then $G$ is weakly clean if, and only if, $A$ is weakly clean.
\end{corollary}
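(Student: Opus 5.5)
The sufficiency direction is immediate from Lemma~\ref{direct}: if $A$ is weakly clean and $B$ is clean, then $G=A\oplus B$ is weakly clean. Full invariance of $B$ plays no role here.

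For necessity, I would use full invariance to make the matrix representation triangular. Write endomorphisms of $G=A\oplus B$ as matrices $\begin{pmatrix} x & y \\ z & w\end{pmatrix}$, as in the proof of Lemma~\ref{direct}, with $x\in \E(A)$, $y\in \mathrm{Hom}(B,A)$, $z\in\mathrm{Hom}(A,B)$ and $w\in\E(B)$. Since $B$ is fully invariant in $G$, every endomorphism maps $B$ into $B$, so the component $B\to A$ vanishes, i.e. $y=0$. Thus $\E(G)$ consists of lower triangular matrices $\begin{pmatrix} x & 0 \\ z & w\end{pmatrix}$. The map $\pi:\E(G)\to\E(A)$, $\begin{pmatrix} x & 0 \\ z & w\end{pmatrix}\mapsto x$, is then a ring homomorphism: the top-left entry of a product of lower triangular matrices is the product of the top-left entries. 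It is also surjective, since $x\mapsto \begin{pmatrix} x&0\\0&0\end{pmatrix}$ is a section.

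Now take any $x\in\E(A)$ and lift it to $m=\begin{pmatrix} x&0\\0&0\end{pmatrix}$. Because $G$ is weakly clean, $m=u\pm e$ with $u\in U(\E(G))$ and $e\in Id(\E(G))$. Applying $\pi$ gives $x=\pi(u)\pm\pi(e)$. Ring homomorphisms send units to units and idempotents to idempotents, so $\pi(u)$ is an automorphism of $A$ and $\pi(e)$ is a projection of $A$. Hence $A$ is weakly clean.

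The only real point to check is that $\pi$ is genuinely multiplicative. This rests entirely on $y=0$, which is exactly what full invariance of $B$ provides. Without it, the top-left entry of a product would pick up the extra term $y z'$, and the argument would fail. The rest is routine.
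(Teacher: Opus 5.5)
Your proof is correct and follows the same route as the paper. The paper also uses full invariance of $B$ to see that $\E(A)$ (and $\E(B)$) is a ring-homomorphic image of $\E(G)$, so weak cleanness passes down to $A$, with the converse coming from Lemma~\ref{direct}; your lower-triangular matrix description simply makes explicit the homomorphism $\E(G)\to\E(A)$ that the paper leaves implicit.
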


\begin{proof} It easily follows that $E(A)$ and $E(B)$ both are endomorphic images of $E(G)$, giving the assertion.
\end{proof}

Since all divisible groups are always clean in virtue of \cite[Lemma 3]{GV}, then we obtain from Lemma~\ref{direct} the following.

\begin{corollary}
A group is weakly clean if, and only if, its reduced part is weakly clean.
\end{corollary}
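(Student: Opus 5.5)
We plan to write an arbitrary abelian group as $G=D\oplus R$, where $D$ is the maximal divisible subgroup of $G$ and $R$ is a reduced complement. This decomposition always exists and is standard (cf. \cite{F1}); any reduced complement is isomorphic to $G/D$, so "the reduced part" is well defined up to isomorphism. The point to keep in mind is that $D$ is fully invariant in $G$, because endomorphic images of divisible groups are divisible. Also, $D$ is clean by \cite[Lemma 3]{GV}.

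For sufficiency, suppose $R$ is weakly clean. Lemma~\ref{direct}, applied with $A=R$ and $B=D$, gives at once that $R\oplus D\cong G$ is weakly clean. For necessity, we use Corollary~\ref{fullinv} with $A=R$ and $B=D$. Its hypotheses hold: $D$ is fully invariant and clean. It then yields that $G$ is weakly clean if and only if $R$ is, which covers both directions at once.

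To make the necessity self-contained rather than resting on the terse proof of Corollary~\ref{fullinv}, we spell out why $\E(R)$ is a homomorphic image of $\E(G)$. Since $D$ is fully invariant, every $\phi\in\E(G)$ has matrix form $\begin{pmatrix}\alpha&0\\ \gamma&\delta\end{pmatrix}$ with respect to $G=R\oplus D$, where $\alpha\in\E(R)$. The map $\phi\mapsto\alpha$ is a surjective unital ring homomorphism; it is just the induced endomorphism of $G/D\cong R$. A unital surjective ring homomorphism sends units to units and idempotents to idempotents. Therefore, if $\phi=u\pm e$ is a weakly clean decomposition in $\E(G)$, applying the map gives a weakly clean decomposition of $\alpha$ in $\E(R)$. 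Surjectivity then shows every element of $\E(R)$ decomposes this way.

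The only step needing any care is this full-invariance and triangular-form observation. It is exactly what makes the quotient argument work, and it is standard. Everything else follows immediately from Lemma~\ref{direct} and \cite[Lemma 3]{GV}.
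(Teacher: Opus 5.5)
Your proposal is correct and follows the paper's route. Sufficiency comes from Lemma~\ref{direct} with $B=D$, which is clean by \cite[Lemma 3]{GV}, and necessity comes from Corollary~\ref{fullinv}: $\E(R)$ is a homomorphic image of $\E(G)$ because $D$ is fully invariant. Your explicit triangular-matrix check of that homomorphism spells out a step the paper leaves implicit.
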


That is why, we henceforth will consider only {\bf reduced} weakly clean groups.

\medskip

Let $G$ be a $p$-group. Recall that the two-sided ideal
$$H(G)=\{\varphi\in E(G)\, |\, {\rm height}(x)<\infty\ \Rightarrow\ {\rm height}(\varphi(x)) > {\rm height}(x)\ \forall \ x\in G[p]\}$$
is often referred to as the \emph{Pierce radical} of $G$. For a reduced $p$-group $G$, it is always true that $J(E(G))\leq H(G)$ (cf. \cite{P63}, \cite{KMT}), where $J(R)$ stands for the Jacobson radical of a ring $R$.

\medskip

Now, we set
\[
J_r'(R):=\{a\in R\,|\, \mathrm{for\, each}\ r\in R\
\mathrm{the\, element}\ 1-ar \ \mathrm{or}\ 1+ar
\ \mathrm{is\, a\, unit\, of\, ring}\ R\}.
\]
and, similarly,
\[
J_l'(R):=\{a\in R\,|\, \mathrm{for\, each}\ r\in R\
\mathrm{the\, element}\ 1-ra \ \mathrm{or}\ 1+ra
\ \mathrm{is\, a\, unit\, of\, ring}\ R\}.
\]

It is quite clear that $J_r'(R)R\subseteq J_r'(R)$, $RJ_l'(R)\subseteq J_l'(R)$ and $J(R)\subseteq J'(R):=J_r'(R)\cap J'_l(R)$, as well as that $f(J'(R))\subseteq J'(S)$ for any homomorphism $f: R\to S$ of rings $R, S$.

\medskip

The following technicalities are critical keys for our further presentation.

\begin{lemma}
If $G$ is a reduced weakly clean $p$-group, then $H(G)\subseteq J'(E(G))$.
\end{lemma}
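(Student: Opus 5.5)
The plan is to reduce the statement to the single assertion that for every $\psi\in H(G)$ at least one of $1-\psi$ and $1+\psi$ is a unit of $E(G)$, and then to exploit weak cleanness of $G$ modulo the ideal $H(G)$.

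\emph{Reduction.} Since $H(G)$ is a two-sided ideal of $E(G)$, for $\varphi\in H(G)$ and any $r\in E(G)$ both $\varphi r$ and $r\varphi$ lie in $H(G)$. Hence, once the assertion is proved for arbitrary $\psi\in H(G)$, taking $\psi=\varphi r$ gives $\varphi\in J_r'(E(G))$ and taking $\psi=r\varphi$ gives $\varphi\in J_l'(E(G))$. Therefore $\varphi\in J'(E(G))$.

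\emph{Step 1: $H(G)$ contains no nonzero idempotent.} Let $e\in H(G)$ be an idempotent. Then $eG$ is a direct summand of $G$, so heights computed in $eG$ coincide with heights computed in $G$. Take $x\in (eG)[p]$ of finite height. Then $e(x)=x$, and $x$ has the same height as itself, which contradicts the defining property of $H(G)$. So every element of $(eG)[p]$ has infinite height in $eG$. But $eG$ is a reduced $p$-group, being a summand of the reduced group $G$, and a nonzero reduced $p$-group always has socle elements of finite height. Hence $eG=0$, that is, $e=0$.

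\emph{Step 2: apply weak cleanness.} Write $\psi=u+e$ or $\psi=u-e$ with $u\in U(E(G))$ and $e\in Id(E(G))$. Pass to $\bar R=E(G)/H(G)$, where $\bar\psi=0$.
\begin{itemize}
\item In the first case $\bar e=-\bar u$; in the second $\bar e=\bar u$. Either way $\bar e$ is a unit of $\bar R$.
\item From $\bar e(\bar 1-\bar e)=0$ and the invertibility of $\bar e$ we get $\bar 1-\bar e=0$, so $1-e\in H(G)$.
\item The element $1-e$ is an idempotent, so Step 1 gives $1-e=0$, i.e. $e=1$.
\end{itemize}
Consequently either $\psi=u+1$, so that $1-\psi=-u$ is a unit, or $\psi=u-1$, so that $1+\psi=u$ is a unit. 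This proves the assertion.

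The only genuinely group-theoretic step, and the one I expect to need care, is Step 1. It rests on two facts: heights are preserved when passing to a direct summand, and a nonzero reduced $p$-group cannot have all of its socle of infinite height (a standard fact from \cite{F1}). Everything else is formal ring theory.
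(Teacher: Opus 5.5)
Your proof is correct and follows the paper's route: write $\psi=u\pm e$, show $e=1$ so that one of $1\pm\psi$ is a unit, then use that $H(G)$ is a two-sided ideal to pass to $\varphi r$ and $r\varphi$. The only difference is how $e=1$ is obtained. The paper cites \cite[Lemma 5]{GV}, whereas you go through $E(G)/H(G)$ and show that $H(G)$ contains no nonzero idempotent; both rest on the fact that a nonzero reduced summand $(1-e)G$ has a socle element of finite height.
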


\begin{proof} Choose $\varphi\in H(G)$ and write $\varphi=u \pm \pi$, where $\pi^2=\pi$. Same as \cite[Lemma 5]{GV}, we must have $\pi =1$, so that $1\pm\varphi\in U(E(G))$, and since $H(G)$ is ideal, it follows that $\varphi\in J'(E(G))$, as requested.
\end{proof}

\begin{lemma}\label{threepoints}
(1) Let $G=A\oplus B$. If $\alpha'\in J'(E(A))$, then $\alpha\in J'(E(G))$, where $\alpha\!\upharpoonright\!A=\alpha'$ and $\alpha(B)=\{0\}$. If $\alpha\in J'(E(G))$, then $\pi\alpha i\in J'(E(A))$, where $\pi: G\to A$ is a projection and $i:A\to G$ is a natural embedding.

(2) If $G=\bigoplus_{n\geq 1}\langle a_n\rangle$, where ${\rm order}(a_n)=p^{r_n}$, $r_1<r_2<\dots$, then
$H(G)\nsubseteq J'(E(G))$.

(3) If a reduced $p$-group $G$ is weakly clean, then $G$ does not have a direct summand which is an unbounded direct sum of cyclic groups.
\end{lemma}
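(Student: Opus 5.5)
For (1), I will identify $E(G)$ with $2\times 2$ matrices as in the proof of Lemma~\ref{direct}, so that $\alpha$ (with $\alpha\!\upharpoonright\!A=\alpha'$, $\alpha(B)=0$) becomes $\begin{pmatrix}\alpha' & 0\\ 0&0\end{pmatrix}$. For $\rho=\begin{pmatrix} x& y\\ z& w\end{pmatrix}\in E(G)$ we get
$$1-\alpha\rho=\begin{pmatrix}1-\alpha' x & -\alpha' y\\ 0 & 1\end{pmatrix}.$$
Since $\alpha'\in J'_r(E(A))$, one of $1\mp\alpha'x$ is a unit of $E(A)$, and then the corresponding block upper triangular matrix is invertible. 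For $1\mp\rho\alpha$ I will use the same reasoning with the lower triangular form, relying on $\alpha'\in J'_l(E(A))$. This gives $\alpha\in J'(E(G))$.

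For the second half of (1), I will show that $\pi\alpha i\in J'_r$. Let $r\in E(A)$ and put $\rho=i r\pi\in E(G)$. Then one of $1\mp\alpha\rho$ is a unit of $E(G)$. Writing $e=i\pi$, the element $\alpha\rho$ equals $\alpha\rho e$, and
$$e(1\mp\alpha\rho)e=e\mp e\alpha\rho e.$$
By the standard corner argument (if $1-ab$ is a unit then so is $1-ba$, and Jacobson-type identities), $1\mp \pi\alpha i r$ is then a unit of $E(A)\cong eE(G)e$. Concretely, $1-(\alpha\rho e)$ is a unit, hence so is $1-e\alpha\rho$, and this element has the form $\begin{pmatrix}1-\pi\alpha i r & 0\\ * & 1\end{pmatrix}$. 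Block-triangular invertibility then forces the corner to be invertible. The left version is symmetric.

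For (2), I will build an explicit $\varphi\in H(G)$ for which both $1-\varphi\rho$ and $1+\varphi\rho$ fail to be units for a suitable $\rho$. I take $\varphi(a_n)=p^{r_{n+1}-r_n}a_{n+1}$; this is well defined since orders match, and it raises heights on socle elements because $r_{n+1}>r_n$ gives $p^{r_{n+1}-r_n}a_{n+1}$ height $\ge$ that needed. More precisely, I will check $\varphi\in H(G)$ on $G[p]$ directly. Next I take $\rho$ sending $a_{n+1}\mapsto a_n$ on every other index, so that $\varphi\rho$ acts as a "shift-like" map. The goal is to arrange that on the socle, $1\pm\varphi\rho$ is injective but not surjective, for example by forcing the would-be preimage of $p^{r_1-1}a_1$ to be an infinite sum $\sum \pm p^{r_n-1}a_n$, which is not in the direct sum. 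Tuning the signs uniformly for both $+$ and $-$ (using $\rho$ and $-\rho$ symmetrically, or composing with a suitable unit) is the main obstacle. My first attempt will be to find a $\rho$ such that $\varphi\rho$ is a shift $s$ with $s(p^{r_n-1}a_n)=p^{r_{n+1}-1}a_{n+1}$ on socles, for which neither $1-s$ nor $1+s$ is surjective on $G[p]$ because the geometric series does not terminate.

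For (3), suppose $G=A\oplus B$ with $A$ an unbounded direct sum of cyclic groups. Then $A$ has a summand $A_0$ of the form in (2), obtained by choosing one cyclic summand of each strictly increasing order, and we may replace $A$ by $A_0$, absorbing the rest into $B$. By (2) there is $\varphi'\in H(A_0)\setminus J'(E(A_0))$. Extending $\varphi'$ by zero on $B$ gives $\varphi\in H(G)$, since heights in a summand agree with heights in $G$. By the preceding lemma, $\varphi\in J'(E(G))$, and then the second half of (1) gives $\varphi'=\pi\varphi i\in J'(E(A_0))$, a contradiction.
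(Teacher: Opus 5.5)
Your arguments for (1) and (3) are correct and are essentially the paper's. Everything in (1) is read off block-triangular $2\times 2$ matrices. Part (3) is exactly the combination of the preceding lemma ($H(G)\subseteq J'(E(G))$) with the second half of (1) and with (2). In the second half of (1) the Jacobson-lemma detour is unnecessary: with $\rho=ir\pi$, the element $1\mp\alpha\rho$ is already lower block triangular, with corner $1\mp\pi\alpha i r$ and identity in the $B$-slot. This is how the paper argues.

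The gap is in (2). You never commit to a $\rho$ for which both $1-\varphi\rho$ and $1+\varphi\rho$ are non-units, and the one concrete candidate you describe fails. Suppose $\rho$ sends $a_{n+1}\mapsto a_n$, for all $n$ or for every other $n$. Then $\varphi\rho$ sends $a_{n+1}\mapsto p^{r_{n+1}-r_n}a_{n+1}$ and kills the remaining generators. So $\varphi\rho$ is diagonal with entries divisible by $p$, and $1\pm\varphi\rho$ acts on each cyclic summand as multiplication by a unit. Hence it is an automorphism for both signs: composing the upward shift $\varphi$ with a downward map just cancels the shift.

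The ``sign-tuning obstacle'' you anticipate does not exist either: take $\rho=1$. Your $\varphi$ is already the shift you want, since $\varphi(p^{r_n-1}a_n)=p^{r_{n+1}-1}a_{n+1}$. For either sign, the equation $(1\pm\varphi)(\sum_n c_na_n)=a_1$ forces $c_1\equiv 1\pmod{p^{r_1}}$ and $c_{n+1}\equiv \mp p^{r_{n+1}-r_n}c_n\pmod{p^{r_{n+1}}}$. Hence $c_n\equiv(\mp1)^{n-1}p^{r_n-r_1}\not\equiv 0\pmod{p^{r_n}}$ for every $n$, which is impossible in a direct sum. So $a_1\notin\im(1\pm\varphi)$, neither $1-\varphi$ nor $1+\varphi$ is a unit, and $\varphi\in H(G)\setminus J'_r(E(G))$. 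This is precisely the paper's proof. Your socle formulation also works once $\rho=1$, because an automorphism maps $G[p]$ onto $G[p]$.
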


\begin{proof} (1) We know that the ring $E(G)$ is isomorphic to the ring $R$ of all matrices of size 2, say
$(\varphi_{ij})$, where $\varphi_{11}\in E(A)$. We have to prove that $(\lambda_{ij})\in J'(R)$, where
$\lambda_{11}\in J'(E(A))$ and $\lambda_{12}=\lambda_{21}=\lambda_{22}=0$. To that end, suppose $(g_{ij})\in R$
and $1\pm \lambda_{11}g_{11}=u_{11}\in U(E(A))$; then, $1\pm (\lambda_{ij})(g_{ij})=(v_{ij})$, where
$v_{11}=u_{11}$, $v_{12}=\pm\lambda_{11}g_{12}$, $v_{21}=0$, $v_{22}=1$ noticing that $(v_{ij})\in U(R)$.

Similarly, it can be shown that $1\pm (g_{ij})(\lambda_{ij})\in U(R)$ for every $(g_{ij})\in R$. So, what remains to establish is that, if $(\lambda_{ij})\in J'(R)$, then $\lambda_{11}\in J'(E(A))$. Choose $g_{11}\in E(A)$ and take
$(g_{ij})\in R$ such that $g_{12}=g_{21}=g_{22}=0$. But, by assumption, $1\pm (\lambda_{ij})(g_{ij})\in U(R)$ and
$1\pm (g_{ij})(\lambda_{ij})\in U(R)$ and, consequently, $1\pm \lambda_{11}g_{11}\in U(E(A))$ and $1\pm g_{11}\lambda_{11}\in U(E(A))$.

(2) Choosing $\alpha\in E(G)$ such that $\alpha(a_n)=p^{r_{n+1}-r_n}a_{n+1}$, it is rather evident that $\alpha\in H(G)$. But, $1\pm \alpha\notin U(E(G))$ since $a_1\notin\im(1\pm\alpha)$, and thus $\alpha\notin J'(E(G))$.

(3) It follows immediately from (1) and (2).
\end{proof}

In \cite[Lemma 8]{GV} is shown that all bounded $p$-groups are clean. Since each reduced unbounded countable $p$-group must have a direct summand which is a direct sum of cyclic groups whose orders are also unbounded, then we quickly derive the following consequence.

\begin{corollary}
A countable weakly clean $p$-group $G$ is bounded.
\end{corollary}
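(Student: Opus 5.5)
I will argue by contradiction: assume $G$ is a countable weakly clean $p$-group that is unbounded, and produce a direct summand of $G$ that is an unbounded direct sum of cyclic groups. This is exactly what Lemma~\ref{threepoints}(3) forbids. First, $G$ may be taken reduced. By the corollary reducing weak cleanness to the reduced part, and since divisible groups are clean and split off as summands, $G$ is weakly clean if and only if its reduced part is. If the reduced part is bounded, I would need to check the intended reading. As stated, a divisible countable $p$-group such as $\mathbb{Z}(p^\infty)$ is clean, hence weakly clean, yet unbounded. So the statement must be read for reduced $G$, in line with the standing convention ``we henceforth will consider only reduced weakly clean groups''. I will adopt that convention explicitly.

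So let $G$ be reduced, countable and unbounded. The key structural input is the standard fact quoted just before the corollary: such a group has an unbounded direct summand that is a direct sum of cyclic groups. I would justify it as follows. Take a basic subgroup $B$ of $G$. If $B$ is bounded, then $B$ is a summand of $G$ and $G = B\oplus D$ with $D\cong G/B$ divisible. Reducedness forces $D=0$, so $G=B$ is bounded, a contradiction. Hence $B=\bigoplus_n B_n$ with infinitely many nonzero homogeneous components $B_n\cong\bigoplus \mathbb{Z}(p^n)$.

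Next I choose orders $r_1<r_2<\cdots$ with $B_{r_k}\neq 0$, and in each chosen component pick a cyclic summand $\langle a_k\rangle$ of order $p^{r_k}$. Each finite sum $\bigoplus_{k\le m}\langle a_k\rangle$ is a bounded pure subgroup, hence a summand of $G$. Countability is what allows the passage to the infinite sum $C=\bigoplus_k\langle a_k\rangle$ being a summand. For instance, Kulikov/Prüfer theory for countable groups gives this, or one can take a basis of $G$ built along a countable height-respecting enumeration. An alternative is to use the well-known theorem that every unbounded reduced $p$-group (countability not needed) has an unbounded direct sum of cyclic groups as a summand, namely a suitably chosen subsum of a basic subgroup with strictly increasing orders.

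Finally, with $C=\bigoplus_{k}\langle a_k\rangle$, $r_1<r_2<\cdots$, a summand of the weakly clean reduced group $G$, Lemma~\ref{threepoints}(3) gives the contradiction. Therefore $G$ is bounded. The main obstacle is the summand-extraction step. Selecting the $\langle a_k\rangle$ from a basic subgroup is easy, but showing that their infinite sum is a direct summand of $G$, and not merely pure, needs a genuine argument. I would cite the classical result (Fuchs, Infinite Abelian Groups) rather than reprove it.
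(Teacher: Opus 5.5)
Your skeleton is the paper's: pass to reduced $G$, find a summand $\bigoplus_k\langle a_k\rangle$ with strictly increasing orders, and apply Lemma~\ref{threepoints}(3). You are right that the statement must be read for reduced groups, since $\mathbb{Z}(p^\infty)$ is clean and unbounded. The paper simply asserts the extraction fact for countable reduced groups. The gap is that neither of your justifications for that step works.

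\emph{The ``alternative'' theorem is false.} You claim every unbounded reduced $p$-group has an unbounded direct sum of cyclic groups as a summand, with no countability needed. An unbounded torsion-complete group, such as the torsion part of $\prod_n\mathbb{Z}(p^n)$, has no such summand, because summands of torsion-complete groups are torsion-complete. The claim also contradicts the paper itself. Such groups are clean by Goldsmith--V\'amos, Lemma~8, so Lemma~\ref{threepoints}(3) forbids them such a summand. If your theorem held, the countability hypothesis would be pointless.

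\emph{The countability argument also fails.} An ascending union of bounded summands need not be a summand, even in a countable group. Suppose $G$ is reduced and $p^\omega G\neq 0$. A basic subgroup $B$ is then never a summand: a complement would be $\cong G/B$, divisible, hence $0$, while $B$ has no nonzero elements of infinite height. Yet $B$ is the union of the bounded summands $B_1\oplus\cdots\oplus B_m$. For the countable generalized Pr\"ufer group $H_{\omega+1}$, every $B_n$ is cyclic. So your procedure with $r_k=k$ returns $C=B$, which is not a summand. Your ``basis built by a height-respecting enumeration'' exists only when $p^\omega G=0$, by Pr\"ufer's theorem.

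\emph{How to repair it.} Split into two cases.

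If $p^\omega G=0$, Pr\"ufer's theorem makes $G$ itself an unbounded direct sum of cyclic groups. Taking one cyclic summand of each of infinitely many distinct orders from that decomposition gives a summand as in Lemma~\ref{threepoints}(2).

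If $p^\omega G\neq0$, you need genuine structure theory. Countable reduced $p$-groups are totally projective, so Theorem~10 of Goldsmith--V\'amos applies; this is exactly what the paper uses for the next corollary. Alternatively, use Ulm and Zippin:
\begin{enumerate}
\item Choose an infinite set $S$ of indices $n$ with $f_n(G)\neq 0$ such that infinitely many such indices lie outside $S$.
\item Zippin's existence theorem gives a countable reduced $K$ whose Ulm invariants are those of $G$ lowered by one at each $n\in S$.
\item Ulm's theorem then yields $G\cong K\oplus\bigoplus_{n\in S}\mathbb{Z}(p^{n+1})$.
\end{enumerate}
This gives an abstract decomposition; it does not show that your particular $C$ is a summand.
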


Likewise, in \cite[Theorem 10]{GV} is proven that an unbounded totally projective $p$-group must have a direct summand which is an unbounded direct sum of cyclic groups, and so we automatically extract the following more general consequence.

\begin{corollary}
A totally projective weakly clean $p$-group is bounded.
\end{corollary}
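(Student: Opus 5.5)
The statement to prove is the last corollary: a totally projective weakly clean $p$-group is bounded. I will argue by contraposition. Suppose $G$ is a totally projective $p$-group that is weakly clean but unbounded.

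First I reduce to the reduced case. By the corollary saying a group is weakly clean iff its reduced part is weakly clean, together with the splitting $G=D\oplus R$ into divisible and reduced parts, I may pass to the reduced part $R$.
\begin{itemize}
\item Totally projective groups are reduced by definition in the standard usage, and this is the convention I will adopt.
\item If one allows a divisible part, then $G$ unbounded with $R$ bounded would force $D\neq 0$. In that case the claim would need the divisible part to vanish, so I will simply state that totally projective groups are taken to be reduced, as in \cite{GV}.
\end{itemize}

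So let $G$ be a reduced, unbounded, totally projective $p$-group. By \cite[Theorem 10]{GV}, $G$ has a direct summand $B=\bigoplus_{i}\langle b_i\rangle$ that is an unbounded direct sum of cyclic groups. This is the only nontrivial structural input. Note that Lemma~\ref{threepoints}(3) then applies directly, since $G$ is a reduced weakly clean $p$-group possessing such a summand, and this is a contradiction.

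For completeness, I will recall how (3) follows, since that is the real content. Because the orders of the $b_i$ are unbounded, I can pick a subsequence with strictly increasing orders $p^{r_1}<p^{r_2}<\cdots$. Then $B=C\oplus B'$ with $C=\bigoplus_n\langle a_n\rangle$ of the form in Lemma~\ref{threepoints}(2), and $C$ is again a summand of $G$.

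By (2), there is an $\alpha'\in H(C)$ with $\alpha'\notin J'(E(C))$. Extend $\alpha'$ by zero on a complement of $C$ to get $\alpha\in E(G)$.
\begin{itemize}
\item Heights of elements of $C$ computed in $C$ agree with heights computed in $G$, because $C$ is a summand. Hence $\alpha\in H(G)$.
\item By the preceding lemma, $H(G)\subseteq J'(E(G))$, so $\alpha\in J'(E(G))$.
\item Lemma~\ref{threepoints}(1) now gives $\pi\alpha i=\alpha'\in J'(E(C))$, which contradicts the choice of $\alpha'$.
\end{itemize}

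Hence $G$ is bounded. The main obstacle is the existence of the unbounded $\Sigma$-cyclic summand. I simply cite this from \cite{GV}, which in turn relies on total projectivity (via Hill's criterion or Nunke's results, e.g. that $G/p^\omega G$ is $\Sigma$-cyclic for totally projective $G$ of length $>\omega$, giving a $\Sigma$-cyclic unbounded summand). The remaining steps are routine checks of the height condition for the extension-by-zero.
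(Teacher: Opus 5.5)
Your proposal is correct and follows the paper's own route: you invoke \cite[Theorem 10]{GV} to get an unbounded $\Sigma$-cyclic summand and then apply Lemma~\ref{threepoints}(3), and you correctly reconstruct its derivation from parts (1), (2) and the inclusion $H(G)\subseteq J'(E(G))$ (in fact the $\alpha'$ of part (2) kills $C[p]$, so its zero-extension lies in $H(G)$ trivially). The one loose point is your closing parenthetical on why \cite[Theorem 10]{GV} holds: an unbounded $\Sigma$-cyclic quotient $G/p^\omega G$ does not by itself produce a summand of $G$ (something like the Ulm-invariant classification of totally projective groups is needed), but your argument rests on the citation, not on that sketch.
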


Besides, it was established in \cite[Lemma 8]{GV} that even torsion-complete $p$-groups are always clean. So, with Lemma~\ref{threepoints} at hand, we detect the following consequence.

\begin{corollary}
A direct sum of torsion-complete $p$-groups is weakly clean if, and only if, it is torsion-complete (and hence clean).
\end{corollary}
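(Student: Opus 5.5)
The "if" direction is immediate. If $G=\bigoplus_{i\in I}T_i$ is itself torsion-complete, then $G$ is clean by \cite[Lemma 8]{GV}. A clean ring is weakly clean, since every element $u+e$ is already of the required form. So $G$ is weakly clean.

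For the "only if" direction, let $G=\bigoplus_{i\in I}T_i$ with each $T_i$ a torsion-complete $p$-group, and suppose $G$ is weakly clean. I will use the classical fact that an unbounded torsion-complete group $T$ has an unbounded basic subgroup $B=\bigoplus\langle b_n\rangle$. The plan has three steps.

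\textbf{Step 1: reduce to the case where almost all summands are bounded.} Suppose infinitely many $T_i$ are nonzero and the exponents of the $T_i$ are unbounded. Each torsion-complete $T_i$ has cyclic direct summands of arbitrarily large order up to its exponent, or of unbounded order if $T_i$ is itself unbounded; this follows because a bounded pure subgroup is a summand and cyclic summands of a basic subgroup are summands of $T_i$. Choose, in distinct summands $T_{i_n}$, cyclic summands $\langle a_n\rangle$ of orders $p^{r_n}$ with $r_1<r_2<\cdots$. Then $\bigoplus_n\langle a_n\rangle$ is a direct summand of $G$, and it is an unbounded direct sum of cyclic groups. This contradicts Lemma~\ref{threepoints}(3).

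\textbf{Step 2: conclude that $G$ is torsion-complete.} By Step 1, either $\sup_i \exp(T_i)<\infty$, or only finitely many summands fail to be bounded by some fixed $p^m$. I need care here, because the negation of "unbounded with infinitely many summands" must be phrased correctly. Precisely: if the orders of cyclic summands taken from pairwise distinct $T_i$ cannot form a strictly increasing sequence, then there exist $m$ and a finite set $F\subseteq I$ such that $p^mT_i=0$ for all $i\notin F$. Hence $G=\bigl(\bigoplus_{i\in F}T_i\bigr)\oplus C$ with $C$ bounded. A finite direct sum of torsion-complete groups is torsion-complete, and a bounded group is torsion-complete. The direct sum of these two torsion-complete pieces is again torsion-complete, so $G$ is torsion-complete and hence clean by \cite[Lemma 8]{GV}.

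\textbf{Main obstacle.} The delicate point is the extraction in Step 1. One must justify that cyclic summands of the required large orders exist inside each unbounded or large-exponent $T_i$, and that a selection from distinct indices yields a direct summand of $G$. The second part is clear, since summands of summands in distinct coordinates assemble into a summand. The first part I would handle as follows: every $p$-group of exponent at least $p^r$ has a cyclic summand of order $p^r$ if it has an element of order $p^r$ and height $0$. Torsion-complete groups have basic subgroups with the same final rank structure, so $T_i$ contains basic generators of every order appearing in its basic subgroup, and these generate bounded pure, hence summand, subgroups. If instead a single $T_i$ is unbounded, Lemma~\ref{threepoints}(3) is not needed for it, since it is already allowed in the finite set $F$.
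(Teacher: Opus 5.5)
Your proof is correct and follows the paper's route. Lemma~\ref{threepoints}(3) forbids choosing cyclic summands of strictly increasing orders from pairwise distinct $T_i$. Via basic subgroups, this forces $p^mT_i=0$ for all $i$ outside a finite set, so $G$ is torsion-complete, and the converse is the Goldsmith--V\'amos fact that torsion-complete groups are clean.

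One side claim in your last paragraph is false: an element of order $p^r$ and height $0$ need not yield a cyclic summand of order $p^r$. For instance, take $x=a+pb$ in $\langle a\rangle\oplus\langle b\rangle$ with $o(a)=p$, $o(b)=p^3$, $r=2$; what is actually needed is that $p^{r-1}x$ have height exactly $r-1$. Nothing depends on this claim, since your basic-subgroup argument already supplies the required summands.
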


We now focus on the torsion-free case that has {\it not} been considered in \cite{GV}.

\medskip

If $G$ is a torsion-free group, then let $\Pi(G):=\{p\in\mathbb{P}\,|\,pG\neq G\}$. Such a group $G$ is said to be {\it quasi-homogeneous}, provided that $\Pi(G)=\Pi(H)$ for each non-zero pure subgroup $H\leq G$.

\medskip

We also recall that

\medskip

\[
H(G)=
\left\{\varphi\in E(G)\,
\Bigg|
\begin{array}{l}
\,x\in D \Rightarrow \varphi(x)=0,\\
 x\in G\setminus D, {\rm height}_p(x)<\infty \Rightarrow {\rm height}_p(x)<{\rm height}_p(\varphi(x))\, \mathrm{for\, each}\, p\in\mathbb{P}
\end{array}
\right\}.
\]

\medskip

Thus, if $G$ is a torsion-free group of finite rank, then $H(G)\leq J(E(G))$ (see \cite[Lemma 21.1]{KMT} and \cite{K}).

\medskip

We, thereby, come to the following.

\begin{lemma}
If $G$ is a torsion-free weakly clean group, then either $1+f\in U(R)$ or $1-f\in U(R)$ for every $f\in H(G)$.
\end{lemma}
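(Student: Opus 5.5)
We follow the pattern of the $p$-group lemma above, i.e.\ of \cite[Lemma 5]{GV}: take $f\in H(G)$, use weak cleanness to write $f=u\pm\pi$ with $u\in U(E(G))$ and $\pi^2=\pi$, and show that necessarily $\pi=1$. Then $f=u\pm 1$, so $u=f\mp 1$ is a unit. Hence $1-f$ or $1+f$ lies in $U(E(G))=U(R)$, which is the claim.

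The key step is $\pi=1$. Consider the idempotent $1-\pi$. Since $H(G)$ is a two-sided ideal, $(1-\pi)f(1-\pi)\in H(G)$. On the other hand,
$$(1-\pi)f(1-\pi)=(1-\pi)u(1-\pi),$$
because $(1-\pi)\pi=0$. Set $K=(1-\pi)G$, a direct summand of $G$. Restricted to $K$, the map $(1-\pi)u$ is a monomorphism (since $u$ is injective and $u^{-1}$ is available). Following \cite{GV}, we would rather argue as follows.

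Write $u=f\mp\pi$ and look at the equation $u^{-1}u=1$ on $K$. For $x\in K$ we have $u(x)=f(x)$, as $\pi x=0$. Hence $x=u^{-1}f(x)$ for all $x\in K$, so $1_K=(1-\pi)u^{-1}f|_K$. The composite $g=(1-\pi)u^{-1}f i_K$ lies in $H(K)$, since the Pierce-type condition is preserved by composing with arbitrary endomorphisms and homomorphisms between summands. So $1_K\in H(K)$. Now $G$ is torsion-free and reduced, so $K$ is too. If $K\neq 0$, pick $0\ne x\in K$ with some finite $p$-height; this is possible because $x\notin D$, since the divisible part of a reduced group is zero. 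Then $1_K$ must strictly raise that $p$-height, which is absurd. Hence $K=0$, i.e.\ $\pi=1$.

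The main obstacle is justifying that membership in $H$ passes to summands: if $h\in H(G)$, then $(1-\pi)h|_K\in H(K)$. The subtle point is that heights in $K$ and in $G$ agree, because $K$ is a summand, hence pure. The defining condition for $H(G)$ must be applied to the element $x\in K\subseteq G$ and then transported by the projection $1-\pi$, which does not decrease heights. A second subtlety is the existence of an element of finite $p$-height for some prime $p$. Such an element exists because a nonzero torsion-free group in which every element has infinite height at every prime is divisible, and $G$, hence $K$, may be taken reduced by the preceding corollary. If this turns out delicate, we instead invoke the clause ``$x\in D\Rightarrow\varphi(x)=0$'', which makes $1_K$ vanish on $D\cap K$ and so also forces $D\cap K=0$.
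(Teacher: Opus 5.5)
Your argument is correct and is essentially the paper's. The paper notes that for $x\in\ker e$ one has $(u-f)(x)=\mp e(x)=0$, i.e.\ $u(x)=f(x)$, which is impossible for $x\neq 0$ because $u$ preserves $p$-heights while $f\in H(G)$ kills $D$ and strictly raises finite $p$-heights; so $\ker e=0$ and $e=1$. This is exactly your identity $x=u^{-1}f(x)$ on $\ker\pi$, and you could read the contradiction off directly in $G$ without compressing to $H(K)$.

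Drop the abandoned aside that $(1-\pi)u|_K$ is a monomorphism, since that does not follow from injectivity of $u$. Also rely on your $D$-clause fallback rather than on taking $G$ reduced via the corollary: the lemma concerns $E(G)$ itself, and the fallback already gives $D\cap K=0$.
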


\begin{proof} Take $f\in H(G)$ and write $f=u \pm e$, where $u$ is a unit and $e$ is an idempotent. For $x\in G$
and $p\in\Pi(G)$ with ${\rm height}_p(x)<\infty$, as ${\rm height}_p(x)<{\rm height}_p(f(x))$, we get ${\rm height}_p((u-f)(x))={\rm height}_p(x)$. So, the difference $u-f$ preserves the $p$-height of each element in
$G$ for all primes $p$. We also claim that $\ker(u-f)=\{0\}$. Indeed, if $0\neq x\in D$, then $(u-f)(x)=u(x)\neq 0$,
and if $x\not\in D$, then ${\rm height}_p(x)<\infty$ for some $p$. Thus, ${\rm height}_p((u-f)(x))={\rm height}_p(x)$ and so $(u-f)(x)\neq 0$. Since $u-f=\mp e$, it follows that $\ker(e)=\{0\}$, and since $e$ is an idempotent, we infer that $e=1$, as wanted.
\end{proof}

We proceed by recalling the following.

\begin{lemma} \cite[Lemma 2.1]{LZ}
Let $R$ be a commutative indecomposable ring. Then:

(1) $R$ is clean if, and only if, $R$ is local.

(2) $R$ is weakly clean but not clean if, and only if, $R$ has exactly two maximal ideals and $2\in U(R)$.

(3) If $2\not\in U(R)$, $R$ is weakly clean if, and only if, $R$ is clean.
\end{lemma}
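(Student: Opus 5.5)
The lemma is quoted from \cite[Lemma 2.1]{LZ}, but a self-contained proof can be sketched as follows. The key structural input is that in a commutative indecomposable ring $R$ the only idempotents are $0$ and $1$. Hence $R$ is clean iff every $a\in R$ satisfies $a\in U(R)$ or $a-1\in U(R)$. Likewise, $R$ is weakly clean iff every $a$ satisfies $a\in U(R)$, $a-1\in U(R)$ or $a+1\in U(R)$. Everything then reduces to counting maximal ideals.

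For (1), the plan runs in both directions. If $R$ is local, then any $a$ lies outside $\mathfrak m$ or $1-a$ does, so $R$ is clean. Conversely, suppose $R$ has two distinct maximal ideals $\mathfrak m_1\neq\mathfrak m_2$. Choose $a\in\mathfrak m_1$ with $a-1\in\mathfrak m_2$, which is possible because $\mathfrak m_1+\mathfrak m_2=R$. Then neither $a$ nor $a-1$ is a unit, so $R$ is not clean.

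For (2), first assume $R$ is weakly clean but not clean, so $R$ is not local by (1).
\begin{itemize}
\item \emph{At most two maximal ideals.} Suppose there are three distinct maximal ideals $\mathfrak m_1,\mathfrak m_2,\mathfrak m_3$. By the Chinese remainder theorem, pick $a\equiv 0 \pmod{\mathfrak m_1}$, $a\equiv 1 \pmod{\mathfrak m_2}$ and $a\equiv -1 \pmod{\mathfrak m_3}$. Then none of $a$, $a-1$, $a+1$ is a unit, a contradiction.
\item \emph{$2$ is a unit.} Suppose $2\in\mathfrak m$ for one of the two maximal ideals. Then $1\equiv -1 \pmod{\mathfrak m}$. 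Choose $a\equiv 1 \pmod{\mathfrak m}$ and $a\equiv 0 \pmod{\mathfrak m'}$. Then $a\in\mathfrak m'$, $a-1\in\mathfrak m$, and $a+1\in\mathfrak m$, so again none of the three is a unit.
\end{itemize}
For the converse, assume $R$ has exactly two maximal ideals and $2\in U(R)$. Then $R$ is not clean by (1). Given $a$, suppose both $a$ and $a-1$ are non-units. Since $a$ and $a-1$ cannot lie in a common maximal ideal, we may take $a\in\mathfrak m_1$ and $a-1\in\mathfrak m_2$. Then $a+1\equiv 1 \pmod{\mathfrak m_1}$, and $a+1\equiv 2\not\equiv 0 \pmod{\mathfrak m_2}$ because $2\in U(R)$. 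So $a+1$ lies in no maximal ideal and is a unit, which shows $R$ is weakly clean.

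Part (3) then follows directly from (2). If $2\notin U(R)$, then $R$ cannot be weakly clean without being clean, and clean always implies weakly clean.

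The only delicate point is the CRT step. Each congruence above should be imposed only modulo the maximal ideals explicitly named, since these are pairwise comaximal, and membership in any other maximal ideal is irrelevant to the argument.
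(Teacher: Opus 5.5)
Your proof is correct and complete. The paper gives no argument of its own here; it only quotes the statement from \cite{LZ}, so there is no paper proof to compare with. Your route is the standard one for this fact, and every step checks:
\begin{itemize}
\item Since the only idempotents are $0$ and $1$, cleanness becomes ``$a$ or $a-1$ is a unit'', and weak cleanness becomes ``$a$, $a-1$ or $a+1$ is a unit''.
\item Everything else is a count of maximal ideals using the Chinese remainder theorem. Your choice of $a$ with residues $0,1,-1$ modulo three maximal ideals rules out a third one. Your choice of $a\equiv 1 \pmod{\mathfrak m}$, $a\equiv 0 \pmod{\mathfrak m'}$, together with $a+1=(a-1)+2$, rules out $2\in\mathfrak m$.
\item In the converse of (2), the observation that $a$ and $a-1$ cannot share a maximal ideal is exactly what is needed.
\end{itemize}
The one implicit assumption is $R\neq 0$, which is part of the usual meaning of ``indecomposable''. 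It is what lets you read ``not local'' as ``has two distinct maximal ideals''. Without it the zero ring would be a clean, non-local counterexample to (1).
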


The next comments are worthwhile.

\begin{remark}\label{comments}
From previous statement, it follows that:

(1) If $G$ is a reduced torsion-free clean group of rank 1, then $E(G)$ is local, i.e., $E(G)\cong\mathbb{Z}_p$
for some prime $p$, where $\mathbb{Z}_p$ is the ring of all rational numbers with denominators that are mutually prime with $p$.

(2) If $G$ is a torsion-free weakly clean non-clean group of rank 1, then $E(G)\cong\mathbb{Z}_{p,q}$ for some odd primes $p$, $q$, where $\mathbb{Z}_{p,q}$ is the ring of all rational numbers with denominators that are mutually prime with $p$, $q$, and $2G=G$.

(3) (see \cite{S}) A reduced completely decomposable torsion-free group $G$ is clean if, and only if, $G=\bigoplus_p G_p$, where $G_p$ are of finite rank the homogeneous components of $G$, and $qG_p=G_p$ for any prime $q\neq p$.
\end{remark}

Note that if for a ring $R$ we have $a\not\in J'(R)$ $\Rightarrow$ $a\in U(R)$, then $R$ is weakly clean.

\begin{lemma}
Let $R$ be a weakly clean ring. Then:

(1) Each right ideal $I\nsubseteq J_r'(R)$ contains a non-zero idempotent.

(2) Each left ideal $I\nsubseteq J_l'(R)$ contains a non-zero idempotent.

(3) If the idempotent $e$ is primitive, the corner ring $eRe$ is weakly clean.
\end{lemma}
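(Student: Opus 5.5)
For part (1), take a right ideal $I\nsubseteq J_r'(R)$ and an element $a\in I\setminus J_r'(R)$. Failure of membership gives some $r\in R$ with both $1-ar$ and $1+ar$ non-units. Set $x=ar\in I$ and write it weakly cleanly as $x=u+e$ or $x=u-e$, with $u$ a unit and $e^2=e$. The idea is to pass to the complementary idempotent, so that one of $1\pm x$ becomes comparable to a unit.

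In the case $x=u+e$, we have
$$1-x=(1-e)-u=-u\bigl(1-u^{-1}(1-e)\bigr).$$
Here $1-x$ is not a unit, so $1-u^{-1}(1-e)$ is not a unit, and hence $1-(1-e)u^{-1}$ is not a unit either, by the standard fact that $1-ab\in U(R)$ iff $1-ba\in U(R)$. Put $f=1-e$, so that $f\neq 0$. I want an idempotent lying inside $xR\subseteq I$, so I would rather write things in terms of $x$ than $f$: from $x-u=e$ we get $f=1-x+u$. This does not lie in $I$, so a different route is needed.

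The cleaner route uses $xR\ni x$ directly and the known trick from clean rings: if $x=u+e$, then $u^{-1}x=1+u^{-1}e$. Instead I would use the left-multiplied form $x(\,\cdot\,)$. Consider the element $e'=u(1-e)u^{-1}$, which is again an idempotent. The claim is that suitable idempotents of the form $xs$ exist.

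The key step, and the main obstacle, is converting the non-unit information into an idempotent in $xR$. I would try the standard argument that in a weakly clean (or exchange-like) ring an element $x$ with $1\pm x\notin U(R)$ is not right-quasiregular in either sign. Concretely, I would apply weak cleanness not to $x$ but to elements like $xy$ for suitable $y$, aiming for a decomposition in which the idempotent can be moved into $xR$. Failing that, I would fall back on Nicholson's characterization: if $x-e=u$, then one obtains $e'\in xR$ with $1-e'\in(1-x)R$, and in particular $e'\neq 0$. This holds provided $1-x\notin U(R)$ (otherwise $e'=0$ would force $R=(1-x)R$). The sign case $x=u-e$ is handled symmetrically with $-x$, whose associated non-unit is $1+x$.

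Part (2) is the left–right mirror of part (1).

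For part (3), take $eRe$ with $e$ primitive and $a\in eRe$. Write $a+(1-e)=u\pm g$ in $R$. Primitivity of $e$ means the only idempotents of $eRe$ are $0$ and $e$, and the aim is to show that $a$ or $a\mp e$ is a unit of $eRe$. I would use the Peirce decomposition together with the unit $u$ to deduce that either $a$ is a unit of $eRe$ or $e\mp a$ is. This last step is the one I am least sure of.
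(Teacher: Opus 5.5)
Your parts (1) and (2) reach the paper's argument, though you circle the key line without stating it. If $x=ar=u\pm e$, then $x(1-e)=(u\pm e)(1-e)=u(1-e)$. So $u(1-e)u^{-1}=x(1-e)u^{-1}$ is an idempotent in $xR\subseteq aR\subseteq I$. It is nonzero: it vanishes only when $e=1$, and then $1-x=-u$ (resp.\ $1+x=u$) would be a unit, contrary to the choice of $r$. This is the same idempotent that your appeal to Nicholson's exchange property produces. Your parenthetical reason that it is nonzero ("$e'=0$ would force $R=(1-x)R$") is not enough in a noncommutative ring, since it only makes $1-x$ right invertible; use $e'=0\Leftrightarrow e=1$ instead.

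Part (3) has a genuine gap: you give no argument. The route you sketch, decomposing $a+(1-e)=u\pm g$ in $R$ and reading off Peirce components, does not work as it stands, because $u$ and $g$ need not commute with $e$. Then $ege$ need not be idempotent and $eue$ need not be a unit of $eRe$. The missing idea is to feed part (1) back in and let primitivity do the work. Suppose $a\in eRe$ and none of $a$, $e-a$, $e+a$ is a unit of $eRe$. Since $1\mp a=(e\mp a)+(1-e)$ is Peirce-diagonal, $1-a$ and $1+a$ are non-units of $R$. Taking $r=1$ shows $a\notin J_r'(R)$, so (1) gives a nonzero idempotent $g\in aR\subseteq eR$. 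Then $ge$ is an idempotent of $eRe$ with $(ge)g=g\neq 0$, and primitivity forces $ge=e$. Hence $e\in aR$, say $e=ab$, and $c=ebe$ satisfies $ac=e$. Now $ca$ is an idempotent of $eRe$, and it is nonzero since $e=(ac)(ac)=a(ca)c$. So $ca=e$ and $a\in U(eRe)$, a contradiction. (The paper instead obtains the left inverse from (2), which applies for the same reason with $r=1$.) Consequently every $a\in eRe$ is a unit, or equals $(-v)+e$ with $v=e-a\in U(eRe)$, or equals $v-e$ with $v=e+a\in U(eRe)$. Hence $eRe$ is weakly clean.
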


\begin{proof}
(1) Let $a\in R$, and suppose that $aR$ contains only the zero idempotent. For $r\in R$, if $ar = u + e$, where
$u$ is a unit in $R$ and $e$ is an idempotent in $R$, then
\[
u(1-e)u^{-1} = (ar-e)(1-e)u^{-1} = ar(1-e)u^{-1}\in aR.
\]
Thus, $e=1$ and $1-ar=-u$. From equality $ar=u-e$, we analogously conclude $1+ar=u$. So, $a\in J_r'(R)$.

(2) Same as in (1), it must be that $ra=u\pm 1$, and thus $a\in J_l'(R)$.

(3) Let $f$ be a primitive idempotent, and choose $a\in fRf$ with $a\not\in J'(fRf)$. Thanks to (1), there exists
$0\neq g = g^2\in aR\leq fR$, and since $f$ is chosen primitive, we find that $gR=fR$. Hence, $aR = fR$, so that $f = ab$ for some $b\in R$ and $a$ has right inverse $fbf$ in $fRf$. Similarly, $a$ possesses left inverse, whence $a\in U(fRf)$.

Now, taking into account the fact that the elements from $J'_r(R)$ or $J'_l(R)$ are weakly clean, it follows by what we have proved so far that $fRf$ is weakly clean ring, as desired.
\end{proof}

As a direct consequence, we deduce the following.

\begin{corollary}\label{indecomp}
An indecomposable direct summand of a weakly clean group too is weakly clean.
\end{corollary}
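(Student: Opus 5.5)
The plan is to derive this from part (3) of the preceding lemma, using the standard dictionary between direct summands of a group and idempotents of its endomorphism ring. Let $G$ be weakly clean, so $R=\E(G)$ is a weakly clean ring. Let $A$ be an indecomposable direct summand, $G=A\oplus B$, and let $f\in Id(R)$ be the projection of $G$ onto $A$ along $B$.

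First, I will identify the corner ring with the endomorphism ring of the summand. Writing endomorphisms of $A\oplus B$ as $2\times 2$ matrices, as in the proof of Lemma~\ref{direct}, the corner $fRf$ consists exactly of the matrices $\begin{pmatrix} x & 0\\ 0 & 0\end{pmatrix}$ with $x\in \E(A)$. The assignment $x\mapsto \pi x \pi$, where $\pi$ is the projection onto $A$, is therefore a unital ring isomorphism $\E(A)\cong fRf$, with the identity of $fRf$ being $f$.

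Second, I will check that $f$ is primitive. Suppose $f=g+h$ with $g,h$ orthogonal nonzero idempotents in $R$. Then $g,h\in fRf$, so they correspond to orthogonal idempotents of $\E(A)$ summing to $1_A$. This gives a decomposition $A=g(A)\oplus h(A)$ into two nonzero summands, contradicting the indecomposability of $A$. Equivalently, $\E(A)$ has only the trivial idempotents $0$ and $1$.

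Finally, part (3) of the preceding lemma shows that $fRf$ is weakly clean. Transporting this through the isomorphism, $\E(A)$ is weakly clean, which is precisely the statement that $A$ is a weakly clean group.

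The only point needing care is the correspondence between idempotents of $fRf$ and direct decompositions of $A$. This is routine, so I expect no real obstacle beyond invoking part (3) of the preceding lemma correctly.
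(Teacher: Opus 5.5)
Your argument is correct and follows the paper's route. The paper states this corollary as a direct consequence of part (3) of the preceding lemma, applied to the primitive idempotent $f\in\E(G)$ projecting onto $A$ together with $\E(A)\cong f\E(G)f$, and you simply make explicit the routine identifications it leaves implicit; one small notational fix is that the isomorphism should be written $x\mapsto \iota x\pi$, with $\iota\colon A\to G$ the inclusion, rather than $\pi x\pi$.
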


The following statement is a bit curious.

\begin{lemma}
Let $G=\bigoplus_{n\geq 1}G_n$, where $G_n$ are reduced torsion-free groups of rank 1 with types $t(G_n)\leq t(G_{n+1})$, and $pG_n\neq G_n$ for some prime $p$. Then, $G$ is not a weakly clean group.
\end{lemma}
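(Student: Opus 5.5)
The plan is to imitate Lemma~\ref{threepoints}(2): build a ``shift'' endomorphism $\alpha$ of $G$ whose image lies in $pG$, and show that weak cleanness would force $1+\alpha$ or $1-\alpha$ to be an automorphism, which is impossible. Throughout I read the hypothesis as saying that $pG_n\neq G_n$ holds for one fixed prime $p$ and \emph{all} $n$; the argument depends on this reading. Since each $G_n$ is torsion-free of rank $1$, this means no nonzero element of $G_n$ has infinite $p$-height. Hence no nonzero element of $G$ has infinite $p$-height, so $G$ has no nonzero $p$-divisible subgroup.

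\emph{Construction.} Because $t(G_n)\le t(G_{n+1})$, there is a nonzero homomorphism $\beta_n\colon G_n\to G_{n+1}$. A nonzero homomorphism between torsion-free groups of rank $1$ is injective. Define $\alpha\in E(G)$ by $\alpha\!\upharpoonright\! G_n=p\beta_n$. Then $\alpha$ is injective, $\alpha(G_n)\subseteq G_{n+1}$, and $\alpha(G)\subseteq pG$.

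\emph{Step 1: the right ideal $\alpha E(G)$ contains no nonzero idempotent.} Suppose $\alpha r=e=e^2\neq 0$. Then $K=e(G)$ is a nonzero direct summand of $G$ with $K\subseteq\alpha(G)\subseteq pG$. Since $K$ is a summand, $K\cap pG=pK$, so $K=pK$. Thus $K$ is a nonzero $p$-divisible subgroup of $G$, contradicting the first paragraph.

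\emph{Step 2: $1\pm\alpha$ is a unit.} I repeat the argument of part (1) of the preceding lemma on weakly clean rings. Write $\alpha=u\pm e$ with $u$ a unit and $e$ an idempotent. The computation $u(1-e)u^{-1}\in\alpha E(G)$ (or its analogue with the sign $-e$) produces an idempotent in $\alpha E(G)$. By Step 1 that idempotent is $0$, so $e=1$. Hence $\alpha\mp1$ is a unit, and so $1+\alpha$ or $1-\alpha$ is an automorphism of $G$.

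\emph{Step 3: neither $1+\alpha$ nor $1-\alpha$ is surjective.} Pick $0\ne a\in G_1$ and suppose $(1\pm\alpha)(x)=a$ with $x=\sum_n x_n$, $x_n\in G_n$. Compare components, using $\alpha(G_n)\subseteq G_{n+1}$. In $G_1$ we get $x_1=a$. In $G_{n+1}$ we get $x_{n+1}=\mp\alpha(x_n)$. Since $\alpha$ is injective, induction gives $x_n\neq0$ for every $n$. This is impossible, because $x$ has finite support in the direct sum. So neither $1\pm\alpha$ is surjective, contradicting Step 2, and $G$ is not weakly clean.

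The main obstacle is Step 1, i.e.\ ruling out nonzero idempotents in $\alpha E(G)$. It is exactly where the hypothesis on $p$ enters, through the fact that a summand contained in $pG$ must be $p$-divisible. If the hypothesis only gives $pG_n\neq G_n$ for a single $n$, this step breaks down. I would then first try to pass to a suitable summand $\bigoplus_{n\ge m}G_n$ and rerun the argument there.
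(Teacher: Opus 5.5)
Your proof is correct and follows essentially the same route as the paper. Both use a shift-type endomorphism with image in $pG$, then use weak cleanness together with the fact that no nonzero summand of $G$ can lie inside $pG$ to force the idempotent to be $1$, and finish with the finite-support argument showing $1\pm(\text{shift})$ is not surjective. The differences are cosmetic: the paper uses $f(g_n)=pg_n+p^2g_{n+1}$ and picks $x\in\ker e\setminus pG$ directly rather than going through the right-ideal idempotent lemma, and your Step~1 spells out the purity argument that the paper compresses into ``one inspects''.
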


\begin{proof} Choose $0\neq g_n\in G_n\setminus pG_n$ with characteristics $\chi(g_n)\leq\chi(g_{n+1})$. Define a mapping $f: G\to G$ via $f(g_n)=pg_{n}+p^2g_{n+1}$. By a way of contradiction, we write that $f=u\pm e$, where
$u$ is a unit in $\E(G)$ and $e$ is an idempotent in $\E(G)$. If $e\neq 1$, then one inspects that $e(x)=0$ for some
$x\in G\setminus pG$. Therefore, $px=u(x)$; however, $u(x)\in G\setminus pG$ since $u$ is an automorphism. So, $e=1$.
Thus, $f\mp 1=u$. But, it is readily verified that $g_1\notin\im(f\mp 1)$, a contradiction, as suspected.
\end{proof}

The following construction confirms our observation alluded to above that the direct sum of two weakly clean groups is {\it not} necessarily again weakly clean.

\begin{example}\label{twonon}
If $A$, $B$ are two non-isomorphic weakly clean but not clean torsion-free groups of rank 1, then $A\oplus B$ is {\it not} weakly clean.
\end{example}

\begin{proof}
Utilizing Remark~\ref{comments}, one writes that $R:=E(A)\cong\mathbb{Z}_{p,q}$ and $S:=E(B)\cong\mathbb{Z}_{g,t}$
for some odd primes $p,q,g,t$, where, for instance, $g\neq p,q$ as $A\ncong B$.

Note that, if $\alpha=(2p-q)/(q-p)$, then $\alpha\in U(R)$; but, $\alpha+1=p/(q-p)$, $\alpha+2=q/(q-p)$ ensuring $\alpha+1,\alpha+2\not\in U(R)$.

Moreover, if $\beta=(2g-t)/(g-t)$, then $\beta\in U(S)$; but, $\beta-1=g/(g-t)$, $\beta-2=t/(g-t)$ assuring $\beta-1,\beta-2\not\in U(S)$.

Consequently, $(\alpha+1)\oplus(\beta-1)$ is the asked endomorphism of $A\oplus B$ that is {\it not} weakly clean. In fact, if $(\alpha+1)\oplus(\beta-1)=\delta\pm e$, then we have $\delta=\varphi\oplus\psi$ for some $\varphi\in U(R)$,
$\psi\in U(S)$ and an idempotent $e=e_1\oplus e_2$. Since $$(\alpha+1)=\varphi\pm e_1\not\in U(R), (\beta-1)=\psi\pm e_2\not\in U(S),$$ we get $e_1, e_2\neq 0$, so that $e_1=1$ and $e_2=1$. But, $\alpha+2\not\in U(R)$ and $\beta-2\not\in U(S)$, thus seeing that the equality $(\alpha+1)\oplus(\beta-1)=\delta\pm (1,1)$ is imposable, which really means that the direct sum $A\oplus B$ is {\it not} weakly clean, as pursued.
\end{proof}

Let $G$ be a reduced completely decomposable torsion-free group such that each its direct summand of rank 1 is weakly clean (see Corollary~\ref{indecomp}). Then, $G$ can be written as $G=G_1\oplus G_2$, where $G_1$ is fully invariant in $G$. Indeed, assume that $G_1$ is the direct sum of all clean direct summands of rank 1, and $G_2$ is the direct sum of all weakly clean (but non-clean) direct summands of rank 1. It follows from Remark~\ref{comments} that $G_1$ is fully invariant in $G$. Also, each homogeneous component of $G_2$ is fully invariant in $G_2$.

\medskip

The following statement is crucial for characterizing when a weakly clean group is reduced completely decomposable torsion-free.

\begin{proposition}\label{rankone}
Let $G=A^n$ for some reduced torsion-free weakly clean non-clean group $A$ of rank 1 and some $n\geq 1$. If $G$ is weakly clean, then $n=1$.
\end{proposition}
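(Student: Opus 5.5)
The plan is to pass to matrices and find a single endomorphism of $A^n$ that admits no decomposition $u\pm e$. The decisive tool will be a rank invariant of idempotents that is compatible with reduction modulo $p$ and modulo $q$.

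By Remark~\ref{comments}(2) we have $E(A)\cong R:=\mathbb{Z}_{p,q}$ with $p\neq q$ odd primes, so that $E(G)\cong M_n(R)$. Suppose $n\geq 2$ and that $M_n(R)$ is weakly clean; we aim for a contradiction.

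First, note that reduction of the entries gives surjective ring homomorphisms $\rho_p: M_n(R)\to M_n(\mathbb{F}_p)$ and $\rho_q: M_n(R)\to M_n(\mathbb{F}_q)$. They send units to invertible matrices and idempotents to idempotents.

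Second, for an idempotent $e\in M_n(R)$ we need that $\rho_p(e)$ and $\rho_q(e)$ have the same rank. Since $R$ is a domain of characteristic $0$, $e$ is diagonalizable over $\mathbb{Q}$ with eigenvalues $0,1$. Hence $\tr(e)=\rk(e)=:r\in\{0,\dots,n\}$ is an integer. Reducing, $\tr(\rho_p(e))\equiv r \pmod p$, and the trace of an idempotent over a field equals its rank modulo the characteristic. Thus $\rk\rho_p(e)\equiv r\pmod p$, and both numbers lie in $[0,n]$. If $n<p$ this forces $\rk\rho_p(e)=r$. For arbitrary $n$ I would instead argue that $R$ is a PID, so the projective module $e R^n$ is free of rank $r$ and $e$ is similar over $R$ to $\diag(1,\dots,1,0,\dots,0)$; then both reductions plainly have rank $r$. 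I would use this PID argument to avoid any restriction on $n$. This is the step I expect to need the most care, though it is standard.

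Third, construct the element. By the Chinese remainder theorem pick integers $c,d$ with
\[
c\equiv 0 \pmod p,\quad c\equiv 1\pmod q,\qquad d\equiv 0\pmod p,\quad d\equiv -1 \pmod q,
\]
and set $a=\diag(c,d,\dots,d)\in M_n(R)$. Suppose $a=u+e$ with $u$ a unit and $e$ an idempotent.

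Modulo $p$ we get $\rho_p(a)=0$, so $-\rho_p(e)$ is invertible. Hence $\rho_p(e)=I$ and $r=n$, so $\rho_q(e)$ is an idempotent of full rank, i.e. $\rho_q(e)=I$. Then $\rho_q(u)=\rho_q(a)-I=\diag(0,-2,\dots,-2)$, which is singular. This is a contradiction.

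Suppose instead $a=u-e$. Again modulo $p$ we get $\rho_p(e)=I$, hence $\rho_q(e)=I$. Then $\rho_q(u)=\rho_q(a)+I=\diag(2,0,\dots,0)$, which is singular since $n\geq 2$. This is again a contradiction. Here we use that $q$ is odd, so $2\neq 0$ in $\mathbb{F}_q$, although only the zero entry actually matters.

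Therefore $a$ is not weakly clean, so $M_n(R)$, and hence $G$, is not weakly clean when $n\ge 2$. This forces $n=1$. As a check, for $n=1$ the element $a$ reduces to the scalar $c$, and no contradiction arises, consistent with $A$ itself being weakly clean.
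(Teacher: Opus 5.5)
Your proof is correct and is essentially the paper's argument in matrix language. The paper uses the same test endomorphism $f=\diag(pm,-pm,\dots,-pm)$ with $pm\equiv 1\pmod q$, i.e.\ your $c=pm$, $d=-pm$; it uses $p$-heights to force $e=1$, and then $q$-heights to show that $f\mp 1$ is not an automorphism in either sign case. Your rank-invariance step via the PID structure of $R$ is valid but unnecessary, since $\rho_p(e)=I$ already gives $e=1$ outright: $1-e$ is an idempotent lying in $pM_n(R)$, hence in $\bigcap_k p^kM_n(R)=0$.
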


\begin{proof} Since by assumption $A$ is reduced non-clean, one has that $pA\neq A$ and $qA\neq A$ for some different odd prime $p,q$. Write $pm-qn=1$ for some integers $m,n$, where $n>1$, and suppose $\alpha$ is the multiplication of $A$ on $pm$, $\beta$ is the multiplication of $A^{n-1}$ on $-pm$ by setting $f:=\alpha\oplus\beta$.

Assume now that $f=u\pm e$, where $u$ is a unit and $e$ is an idempotent. If $e(x)=0$ for some $x\in G\setminus pG$, then $f(x)=u(x)$ which is an absurd, because $${\rm height}_p(f(x))>{\rm height}_p(u(x))={\rm height}_p(x),$$ so that $\ker e=\{0\}$ implying $e=1$ as $e$ is invertible in view of finite rank restriction on $G$.

Write now that $f=u+1$. Since $f(A)\leq A$, one sees that $u(A)\leq A$, i.e., $u\!\upharpoonright\!A$ is an  automorphism of $A$. Thus, $$pma-a=(pm-1)a=qna=u(a)$$ for some $0\neq a\in A$ that is wrong, because $${\rm height}_q(qna)>{\rm height}_q(u(a))={\rm height}_q(a).$$

The other equality $f=u-1$ is also impossible, because $$-pmx+x=(-pm+1)x=-qnx$$ for some $x\in A^{n-1}$ yielding $${\rm height}_q((-pm+1)x)={\rm height}_q(qnx)>{\rm height}_q(u(x))={\rm height}_q(x).$$ Therefore, it must be that $n=1$, as expected.
\end{proof}

We now accumulated all the information necessary to establish the following criterion.

\begin{theorem}\label{redcomp}
Let $G$ be a reduced completely decomposable torsion-free group. Then, $G$ is weakly clean if, and only if, $G=G_1\oplus G_2$, where $G_1$ is a completely decomposable clean group and $G_2$ is a weakly clean group of
rank $\leq 1$.
\end{theorem}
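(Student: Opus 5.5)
For sufficiency, assume $G=G_1\oplus G_2$ with $G_1$ a completely decomposable clean group and $G_2$ weakly clean of rank $\le 1$. Lemma~\ref{direct}, applied with $A=G_2$ and $B=G_1$, immediately gives that $G$ is weakly clean. If $G_2=0$, then $G=G_1$ is clean and hence weakly clean.

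For necessity, suppose $G$ is weakly clean and write $G=\bigoplus_{i\in I}A_i$ with each $A_i$ of rank $1$. Each $A_i$ is an indecomposable direct summand, so it is weakly clean by Corollary~\ref{indecomp}. By Remark~\ref{comments}, each $A_i$ is then of one of two kinds.
\begin{itemize}
\item Either $A_i$ is clean, with $E(A_i)\cong\mathbb{Z}_p$; then $A_i$ is $q$-divisible for every prime $q\ne p$.
\item Or $A_i$ is weakly clean but not clean, with $E(A_i)\cong\mathbb{Z}_{p,q}$ for odd primes $p\ne q$ and $2A_i=A_i$.
\end{itemize}
As in the discussion preceding Proposition~\ref{rankone}, let $G_1$ collect the summands of the first kind and $G_2$ those of the second kind. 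Then $G=G_1\oplus G_2$ with $G_1$ fully invariant. Consequently $E(G_2)$ is an endomorphic image of $E(G)$: compress an endomorphism of $G$ to $G_2$ along the projection. Since weak cleanness passes to ring homomorphic images, $G_2$ is weakly clean.

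It remains to show two things: that $G_1$ is clean, and that $G_2$ has rank $\le 1$.

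For the rank of $G_2$, first take two summands of $G_2$ that are non-isomorphic, say $A$ and $B$. Since $G_2$ is completely decomposable and each homogeneous component is fully invariant in $G_2$, the subgroup $A\oplus B$ is a summand whose endomorphism ring is an image of $E(G_2)$, so it would be weakly clean. This contradicts Example~\ref{twonon}. Some care is needed here. If the types of $A$ and $B$ are comparable there may be nonzero homomorphisms between them, so projecting onto $A\oplus B$ need not be a ring map. In that case I would instead run the argument of Example~\ref{twonon} directly: use the diagonal endomorphism $(\alpha+1)\oplus(\beta-1)\oplus 1$ on $G_2$, extended by $1$ on the remaining summands. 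Then use height arguments, as in the proof of Proposition~\ref{rankone}, to force the idempotent to be $1$ whenever the element is not a unit. Once this is done, all summands of $G_2$ are isomorphic, so $G_2\cong A^n$ with $n$ possibly infinite. Proposition~\ref{rankone} handles finite $n$. For infinite $n$, I would restrict to $A^2$, a summand whose endomorphism ring is a corner ring of the form $eE(G_2)e$, and rerun the proof of Proposition~\ref{rankone} with $n=2$ inside $G_2$. The only place that proof uses finite rank is in concluding $e=1$ from $\ker e=0$, and that step holds for any idempotent. Hence $n\le1$.

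For $G_1$, I would use the characterization in Remark~\ref{comments}(3). Write $G_1=\bigoplus_p H_p$, where $H_p$ is the sum of the summands with $E\cong\mathbb{Z}_p$. Each $H_p$ is $q$-divisible for all $q\ne p$, so it is homogeneous, and it is fully invariant. What must be shown is that each $H_p$ has finite rank. Infinite rank of a homogeneous $p$-local group would give a summand $\bigoplus_{n\ge1}G_n$ with $pG_n\ne G_n$ and equal types. The earlier lemma on such sums (the ``curious'' one) says this sum is not weakly clean, and this contradicts the fact that $E(H_p)$ is an image of $E(G)$. With finite rank established, $G_1$ is clean by Remark~\ref{comments}(3).

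The main obstacle is the step showing $G_2$ has rank $\le1$ when the types are comparable, since then the summands are not fully invariant and one must argue inside $E(G_2)$ directly with heights rather than quote Example~\ref{twonon}.
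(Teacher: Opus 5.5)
Your route is the paper's: split $G=G_1\oplus G_2$ via Remark~\ref{comments}, use that the endomorphism ring of a fully invariant summand is a ring image of $E(G)$, apply Proposition~\ref{rankone} to the homogeneous components of $G_2$ and Example~\ref{twonon} to distinct components, and use Lemma~\ref{direct} for sufficiency. You are also more careful than the paper, which does not say why the $H_p$ have finite rank or what happens for infinite $n$. As written, though, the $G_2$ step has a hole. You assert that $E(A\oplus B)$ is an image of $E(G_2)$ for any two non-isomorphic rank-one summands $A,B$. That only holds once $A$ and $B$ are whole homogeneous components, which you know only after proving the components have rank $\le 1$. For the case you flag as the main obstacle (comparable types) you give only an unexecuted sketch. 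The missing observation is that this case cannot occur. $E(A)\cong\mathbb{Z}_{p,q}$ means $A$ is $r$-divisible exactly for $r\ne p,q$, so $A\cong\mathbb{Z}_{p,q}$. If $\{p,q\}\ne\{p',q'\}$, some prime of one pair lies outside the other, so the types of $\mathbb{Z}_{p,q}$ and $\mathbb{Z}_{p',q'}$ are incomparable. Hence all $\mathrm{Hom}$-groups between distinct homogeneous components of $G_2$ vanish, and $E(G_2)$ is the product of the endomorphism rings of its components. Then argue in the paper's order. First, each component is weakly clean, hence of rank $\le 1$. Then any two components form a fully invariant summand $A\oplus B$ with $E(A\oplus B)=E(A)\times E(B)$ an image of $E(G_2)$, contradicting Example~\ref{twonon}.

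Second, twice you pass to a summand whose endomorphism ring is only a corner $eE(\cdot)e$. One is the countable subsum $\bigoplus_{n\ge1}G_n$ of an $H_p$ of uncountable rank; the other is $A^2$ inside $A^{(\kappa)}$, the direct sum of $\kappa$ copies of $A$. Nothing available transfers weak cleanness to such corners; the paper's corner lemma needs $e$ primitive. So non-weak-cleanness of the corner contradicts nothing, and you should run the arguments on the whole fully invariant piece instead.

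For $H_p=C\oplus D$ with $C=\bigoplus_{n\ge1}\langle g_n\rangle$, let $f(g_n)=pg_n+p^2g_{n+1}$, and let $f$ be multiplication by $p$ on $D$. Since $f(H_p)\subseteq pH_p$ and $\bigcap_k p^kH_p=0$, any nonzero summand $\ker e$ contains an element outside $pH_p$. This forces $e=1$, and then $g_1\notin\im(f\mp1)$ because $f\mp1$ preserves $C$ and $D$.

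For $A^{(\kappa)}$ with $\kappa\ge 2$, take multiplication by $pm$ on one copy and by $-pm$ on all the others. As you correctly note, $\ker e=0$ gives $e=1$ for any idempotent, so the proof of Proposition~\ref{rankone} then goes through verbatim.
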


\begin{proof} {\bf Necessity.} Suppose that $G_1$ is the direct sum of all clean direct summands of rank $1$ of $G$, and $G_2$ is the direct sum of all weakly clean but non-clean direct summands of rank $1$ of $G$. It follows from Remark~\ref{comments} that $G_1$ is fully invariant in $G$ and, in particular, $G_1$ is clean, as claimed, whereas $G_2$ is weakly clean (in case that it is non-zero).

Furthermore, one observes that each homogeneous component of $G_2$ is fully invariant in $G_2$, so it must be weakly clean and hence Proposition~\ref{rankone} works to deduce that this component is of rank $\leq 1$. Moreover, each direct summand of $G_2$ is weakly clean as being a fully invariant direct summand of a weakly clean group, whence Example~\ref{twonon} guarantees that ${\rm rank}(G_2)=1$ provided that $G_2\neq\{0\}$, as asserted.

{\bf Sufficiency.} It follows at one from Corollary~\ref{fullinv}.
\end{proof}

Commenting on the last necessary and sufficient condition, a difficult question that remains unsettled yet is to decide whether or not homogeneous completely decomposable weakly clean groups are of finite rank.

\medskip

As we have seen above, there is a torsion-free weakly clean group which is {\it not} clean. Nevertheless, the following claim concerning the 2-primary case somewhat sounds surprisingly. Before formulating and proving it, we need the following statement.

\begin{lemma}\label{newinstrum} (\cite[Lemma 20.3]{KMT})
Let $G$ be a $p$-group, and let $\alpha\in \E(G)$. In order to have $\alpha\in U(\E(G))$, it is necessary and sufficient $\ker\alpha\cap G[p]=\{0\}$ and $\alpha((p^nG)[p])=(p^nG)[p]$ for each $n=0,1,2,\dots$.
\end{lemma}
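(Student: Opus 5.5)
The plan is to prove the two implications separately. Necessity is formal. Sufficiency splits into injectivity, which is immediate, and surjectivity, which I would prove by induction on the order of an element, using the hypothesis at the finite levels $n$.

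\textbf{Necessity.} Let $\alpha\in U(\E(G))$. Then $\ker\alpha=\{0\}$, so in particular $\ker\alpha\cap G[p]=\{0\}$. For each $n$, both $\alpha$ and $\alpha^{-1}$ are endomorphisms, hence they map $p^nG$ into $p^nG$ and $G[p]$ into $G[p]$. Therefore $\alpha((p^nG)[p])\subseteq (p^nG)[p]$ and $\alpha^{-1}((p^nG)[p])\subseteq (p^nG)[p]$. Applying $\alpha$ to the second inclusion gives the reverse inclusion, so $\alpha((p^nG)[p])=(p^nG)[p]$.

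\textbf{Sufficiency: injectivity.} Suppose $\ker\alpha\neq\{0\}$. Since $G$ is a $p$-group, $\ker\alpha$ is a nonzero $p$-group and so contains an element of order $p$. That element lies in $\ker\alpha\cap G[p]$, contradicting the hypothesis. Hence $\alpha$ is injective.

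\textbf{Sufficiency: surjectivity.} I would show by induction on $k\ge 1$ that every $x\in G$ of order $p^k$ lies in $\im\alpha$.
\begin{itemize}
\item For $k=1$ we have $x\in G[p]=(p^0G)[p]=\alpha(G[p])$, so $x\in\im\alpha$.
\item For the inductive step, let $x$ have order $p^{k+1}$. Then $p^kx\in (p^kG)[p]$, and by hypothesis $p^kx=\alpha(z)$ for some $z\in(p^kG)[p]$. Writing $z=p^ky$ with $y\in G$ gives $p^kx=p^k\alpha(y)$.
\item Hence $p^k(x-\alpha(y))=0$, so $x-\alpha(y)$ has order at most $p^k$. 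By the induction hypothesis $x-\alpha(y)=\alpha(w)$ for some $w\in G$, and then $x=\alpha(y+w)\in\im\alpha$.
\end{itemize}
Since every element of the $p$-group $G$ has finite order, $\alpha$ is surjective. Together with injectivity, $\alpha$ is an automorphism, so $\alpha\in U(\E(G))$.

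The only delicate point is the inductive step. There the hypothesis must be used for the finite index $n=k$, and one must use that an element of $(p^kG)[p]$ is of the form $p^ky$ with $y\in G$, not merely $y\in G[p^{k+1}]$. This lets us peel off the top layer $p^kx$ and drop the order. In this argument only the conditions for finite $n$ are used.
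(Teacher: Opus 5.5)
Your proof is correct and complete. The paper gives no proof of this lemma. It is quoted from \cite[Lemma 20.3]{KMT} and only applied, in Proposition~\ref{2-groups} to $\alpha=u-2e$, so there is no in-paper argument to compare against. Your argument is the standard self-contained one:
\begin{itemize}
\item injectivity follows from $\ker\alpha\cap G[p]=\{0\}$, because every nonzero subgroup of a $p$-group meets $G[p]$;
\item surjectivity follows by induction on the order: you write $p^kx=\alpha(z)$ with $z\in(p^kG)[p]$, take $z=p^ky$, and drop to $x-\alpha(y)$.
\end{itemize}

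One point to tidy up: state the induction hypothesis as ``every element whose order divides $p^k$ lies in $\im\alpha$''. This is strong induction, and it also covers $0$. It matters because $x-\alpha(y)$ may have order strictly smaller than $p^k$. Your phrase ``order at most $p^k$'' shows this is what you intend.

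Your argument also shows the hypothesis can be weakened. Sufficiency needs only $\ker\alpha\cap G[p]=\{0\}$ together with the inclusion $(p^nG)[p]\subseteq\alpha(p^nG)$ for each finite $n$. This is certainly met by what the paper verifies for $u-2e$ in Proposition~\ref{2-groups}.
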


So, we are ready to check truthfulness of the following surprising assertion.

\begin{proposition}\label{2-groups}
Weakly clean $2$-groups are clean.
\end{proposition}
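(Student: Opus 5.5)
The strategy is to show directly that every endomorphism of a weakly clean $2$-group $G$ is a sum of an automorphism and a projection. Since $\E(G)$ is weakly clean, each $\varphi\in\E(G)$ is of the form $u+e$ or $u-e$ with $u$ an automorphism and $e^2=e$. The first form is already a clean decomposition. So the whole point is to convert a decomposition $\varphi=u-e$ into one of the form $\varphi=u'+e'$.

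For this I would use the identity
$$u-e=(u-2e)+e .$$
It then suffices to prove that $u-2e$ is an automorphism whenever $u$ is an automorphism and $e$ is an idempotent of $\E(G)$, for $G$ a $2$-group. This is exactly where Lemma~\ref{newinstrum} with $p=2$ comes in. The endomorphism $2e$ maps $G[2]$ to $0$. More generally, for every $n$ it maps $(2^nG)[2]$ to $0$, and it maps $x\in G[2]$ to $2e(x)=e(2x)=0$. Hence for $x\in G[2]$ we have $(u-2e)(x)=u(x)$. The two conditions of Lemma~\ref{newinstrum} depend only on the restriction of the map to the socle subgroups $(2^nG)[2]\subseteq G[2]$. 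On $G[2]$ the maps $u-2e$ and $u$ agree, and $u$ satisfies both conditions because it is an automorphism. In detail:
\begin{itemize}
\item $\ker(u-2e)\cap G[2]=\ker u\cap G[2]=\{0\}$;
\item $(u-2e)((2^nG)[2])=u((2^nG)[2])=(2^nG)[2]$ for all $n$.
\end{itemize}
Therefore $u-2e\in U(\E(G))$, and consequently $\varphi=(u-2e)+e$ is a clean decomposition.

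A small point remains: the notion of weakly clean group is defined for arbitrary $G$. If the $2$-group is not reduced, one can still apply Lemma~\ref{newinstrum}, since that lemma is stated for arbitrary $p$-groups. Alternatively, one can first reduce to the reduced part using the earlier corollary together with the fact that divisible groups are clean.

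I expect the main obstacle to be checking carefully that Lemma~\ref{newinstrum} really depends only on the action on the socle. In particular, the second condition asks for the image of $(2^nG)[2]$ and not for preimages. Since $(u-2e)(y)=u(y)$ for each individual $y$ in $G[2]$, this causes no difficulty. No other case analysis should be needed, which also explains why the phenomenon is special to $p=2$: the element $2e$ kills the socle precisely because $2=p$.
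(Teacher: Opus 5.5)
Your proposal is correct and is essentially the paper's own proof. The paper also rewrites $u-e=(u-2e)+e$ and uses Lemma~\ref{newinstrum} to show that $u-2e$ is an automorphism, because $2e$ annihilates $G[2]$ and so $u-2e$ agrees with $u$ on $G[2]$.
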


\begin{proof} Assume that $\alpha=u-e$, where $u$ is an automorphism and $e$ is an idempotent. Then, one may write that $\alpha=(u-2e)+e$, where $u-2e\in U(\E(G))$. To substantiate the stated above relation, one observes that, if $x\in G[2]$, then $(u-2e)(x)=u(x)\neq 0$, i.e., $\ker(u-2e)=\{0\}$, and thus $$(u-2e)((2^nG)[2])=u((2^nG)[2])=(2^nG)[2].$$ It now just remains to refer to Lemma~\ref{newinstrum} to get the promised implication.
\end{proof}

So, a difficult question which logically arises is what happens for weakly clean $p$-groups in the case of $p\not=2$; we conjecture that there is a weakly clean $3$-group that is {\it not} clean, but the formal building of such a construction still eludes us, however. Perhaps the endomorphism ring of such a group, if certainly it exists, should {\it not} be additively generated by automorphisms of this group.

\section{Variations of Quasi Nil-Clean Matrix Rings over a Finite Field}

Standardly, an element $r$ of a ring $R$ is said to be {\it $m$-potent} if there is an integer $m>1$ such that $r^m=r$. In particular, if $m=2$, the element $r$ is termed {\it idempotent}, and if $m=3$, the element $r$ is termed {\it tripotent}.

More generally, mimicking \cite{quasi-clean}, an element $e$ of a ring $R$ is called {\it quasi idempotent} if $e^{2} = \lambda e$ for some ${\lambda \in U(R) \cap C(R)}$. By analogy, an element $f$ of a ring $R$ is called {\it quasi $q$-potent} if $f^{q} = \lambda f$ for some ${\lambda \in U(R) \cap C(R)}$. If $e^{2} = \lambda e$ with ${\lambda \in U(R) \cap C(R)}$, then one simply checks that $(\lambda^{-1} e)^{2} = \lambda^{-1}e$. Therefore, the set of quasi idempotents coincides with the set of elements $kf$ with ${k \in U(R) \cap C(R)}$ and $f^{2} = f$. In particular, every element of a field is necessarily a quasi idempotent. Moreover, if $f$ is a $q$-potent and ${\lambda \in U(R) \cap C(R)}$, then one plainly verifies that $(\lambda f)^{q} = \lambda^{q} f = \lambda^{q-1} (\lambda f)$ and $\lambda f$ is a quasi $q$-potent. However, for $q > 2$, not every quasi $q$-potent can be represented in this form.

\medskip

The following illustrates some more elementary properties of this sort.

\begin{example}
(1) For any $q > 1$, every quasi $q$-potent of an integral domain $R$ (and, therefore, every element of a direct product of integral domains) has the form $\lambda f$ for some $\lambda \in U(R)$ and a $q$-potent $f$. Indeed, assume that $g \in R$ and $g^{q} = \mu g$ for some $\mu \in U(R)$. Thus, if $g = 0$, then $g = 1 \cdot 0$; otherwise, $g$ is a unit and $g = g \cdot 1$.

(2) Consider the matrix
$A :=
\begin{pmatrix}
0 & -1
\\
1 &  0
\end{pmatrix}
\in M_{2}(\mathbb{F}_{3})$. Since one calculates that $A^{3} = -A$, so $A$ is a quasi $3$-potent. It is also clear that, if $(\lambda A)^3 = \lambda A$ for some $\lambda \in U(\mathbb{F}_{3})$, then $\lambda^{3} = -\lambda$, a contradiction. Therefore, $A$ cannot be represented as $\lambda f$, where ${\lambda \in U(M_{2}(\mathbb{F}_{3})) \cap C(M_{2}(\mathbb{F}_{3}))}$ and $f \in M_{2}(\mathbb{F}_{3})$ is a tripotent.
\end{example}

We now come to the following notion, which was introduced and explored in \cite{quasi-clean} as a non-trivial generalization of clean rings (that are those rings whose elements are a sum of a unit and an idempotent).

\begin{definition}
An element $a$ of a ring $R$ is called {\it (strongly) quasi clean} if $a$ is a sum of a unit and a quasi idempotent (that commute one another). So, a ring is called {\it (strongly) quasi clean} if each of its elements is (strongly) quasi clean.
\end{definition}

For example, it was shown there that if the matrix ring $M_{n}(R)$ over a commutative ring $R$ is quasi clean, then $R$ is quasi clean. Likewise, it was proven that for a prime $p$ and a positive integer $n \geq 2$ the ring $M_{n}(\mathbb{Z}_{(p)})$ is strongly quasi clean if, and only if, $p > n$.
\medskip

Also, we recall that a ring $R$ is called {\it (strongly) $q$-nil-clean} if each of its elements is a sum of a $q$-potent and a nilpotent (that commute). We now introduce two generalizations of this concept.

\begin{enumerate}
\item A ring $R$ is called {\it (strongly) $\lambda$-$q$-nil-clean} if each of its elements is the sum $\lambda f + N$, where $f^{q}=f$, ${\lambda \in U(R) \cap C(R)}$, and $N$ is a nilpotent (that commute).
\item A ring $R$ is called {\it (strongly) quasi $q$-nil-clean} if each of its elements is the sum of a quasi $q$-potent and a nilpotent (that commute).
\end{enumerate}

The following chain of implications is pretty obvious:
\[
  \text{$R$ is $q$-nil-clean} \;\Rightarrow\;
  \text{$R$ is $\lambda$-$q$-nil-clean} \;\Rightarrow\;
  \text{$R$ is quasi $q$-nil-clean}.
\]

These three classes are pairwise distinct as our next arguments show: every infinite field is $\lambda$-$q$-nil-clean, but {\it not} $q$-nil-clean. Since $\mathbb{Z}/6\mathbb{Z}$ is isomorphic to a direct product of fields, every element of $\mathbb{Z}/6\mathbb{Z}$ is quasi idempotent, and hence $\mathbb{Z}/6\mathbb{Z}$ is quasi $2$-nil-clean. However, $\mathbb{Z}/6\mathbb{Z}$ is manifestly {\it not} $\lambda$-$2$-nil-clean, because there are no nonzero nilpotent elements in $\mathbb{Z}/6\mathbb{Z}$ and $2 + 6\mathbb{Z}$ cannot be represented as $\lambda f$, where $\lambda \in U(R)$ and $f$ is an idempotent.

Our next commentaries are somewhat worthy of recording.

\begin{remark}
Quasi $2$-nil-clean rings were studied in \cite{Su} under the name {\it nil-quasi-clean rings}. It was proved in \cite[Theorem 2.3]{Su} that, for any field $F$, the matrix ring $M_{n}(F)$ is quasi-$2$-nil-clean if, and only if, either $n=2$ and $F$ is a perfect field of characteristics $2$, or $n > 2$ and $F \cong \mathbb{F}_{2}$. For the sake of completion we will include a much simpler direct proof of this result in the sequel.

In another vein, it is trivial that, for any field $F$, the ring $M_{n}(F)$ is $\lambda$-$q$-nil-clean if, and only if, each element of $M_{n}(F)$ is a linear combination over $C(M_{n}(F))$ of a $q$-potent and a nilpotent. However, in the general case, this equivalence does {\it not} hold: in fact, $\mathbb{Z}/6\mathbb{Z}$ is {\it not} $\lambda$-$2$-nil-clean, but each element of $\mathbb{Z}/6\mathbb{Z}$ is a linear combination over $\mathbb{Z}/6\mathbb{Z}$ of an idempotent and a nilpotent (say, $z = z \cdot 1 + 0$).
\end{remark}

Let $F$ be an arbitrary field. We use the following notations:

\begin{itemize}
\item $I_{k}$ is the {\it unit $k \times k$ matrix}.
\item $\overline{F}$ is the {\it algebraic closure} of $F$.
\item $V_{\lambda}(A)$ is the {\it eigenspace} of a matrix $A \in M_{n}(F)$ associated with the {\it eigenvalue} $\lambda$ of $A$ amounting to ${V_{\lambda} (A) = \ker(A-\lambda I_{n})}$.
\item Let $p = x^{n} - \sum\limits_{i=0}^{n-1}a_{i}x^{i} \in F[x]$ be a unitary polynomial. Denote the {\it companion matrix} of $p$ by $C(p)$; i.e.,
\[
  C(p) =
  \left(
  \begin{array}{ccccc}
  0 & 0 & \cdots & 0 & a_{0}
  \\
  1 & 0 & \cdots & 0 & a_{1}
  \\
  0 & 1 & \cdots & 0 & a_{2}
  \\
  \vdots & \vdots & \ddots & \vdots & \vdots
  \\
  0 & 0 & \cdots & 1 & a_{n-1}
  \end{array}
  \right).
\]

\medskip

\noindent Define the {\it trace} of $p$ as the {\it trace} of $C(p)$; i.e., $\tr(p) = \tr(C(p)) = a_{n-1}$.
\end{itemize}

\subsection{Matrix rings of order $2$}

We will start our research with the special case of matrix rings of order $2$ by proving the following.

\begin{proposition}\label{lambda-odd-N-M2}
Let $q \geq 3$ be an odd integer. Then, for any field $F$, the ring $M_{2}(F)$ is $\lambda$-$q$-nil-clean.
\end{proposition}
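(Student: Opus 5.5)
The plan is to work up to similarity, reducing to companion matrices, and in every case to produce an explicit decomposition $A=\lambda f+N$ with $\lambda\in F^{*}$ (a central unit of $M_{2}(F)$), $N$ nilpotent, and $f$ either an idempotent or an involution. The oddness of $q$ enters exactly here. An idempotent trivially satisfies $f^{q}=f$. If $f^{2}=I_{2}$, then $f^{q}=(f^{2})^{(q-1)/2}f=f$ because $q-1$ is even. The property of being $\lambda f+N$ with $f^{q}=f$ and $N$ nilpotent is invariant under conjugation. So it suffices to treat $A$ scalar and $A=C(x^{2}-tx-d)=\begin{pmatrix}0&d\\1&t\end{pmatrix}$, since every non-scalar $2\times 2$ matrix is cyclic and hence similar to such a companion matrix. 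A $2\times2$ matrix is nilpotent exactly when its trace and determinant vanish, so in each case I only need to check these two quantities for $A-\lambda f$.

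Scalar case: if $A=aI_{2}$ with $a\neq 0$, take $\lambda=a$, $f=I_{2}$, $N=0$. If $a=0$, take $f=0$, $N=0$.

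Non-scalar case with $t\neq 0$: use a rank-one idempotent of trace $1$ and put $\lambda=t$, which kills the trace. Concretely, take $f=\begin{pmatrix}0&0\\x&1\end{pmatrix}$; one checks $f^{2}=f$ for every $x$. Then
\[
A-tf=\begin{pmatrix}0&d\\1-tx&0\end{pmatrix}.
\]
Choosing $x=t^{-1}$ makes the lower-left entry zero, so $A-tf$ is strictly upper triangular and hence nilpotent.

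Non-scalar case with $t=0$: here $A=\begin{pmatrix}0&d\\1&0\end{pmatrix}$. If $d=0$, then $A$ is already nilpotent and $f=0$ works. If $d\neq 0$, use a trace-zero involution $f=\begin{pmatrix}1&0\\y&-1\end{pmatrix}$; one checks $f^{2}=I_{2}$, and this holds in every characteristic, including $2$. Take $\lambda=1$. Then
\[
A-f=\begin{pmatrix}-1&d\\1-y&1\end{pmatrix}
\]
has trace $0$ and determinant $-1-d+dy$. Choosing $y=(1+d)/d$ makes the determinant zero, so $A-f$ is nilpotent.

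The only real obstacle is noticing that in the trace-zero case one needs a non-scalar $q$-potent of trace $0$. Only for odd $q$ does an involution qualify, and this is what makes the statement specific to odd $q$. In the remaining case, the trace is matched by scaling a trace-one idempotent by the central unit $\lambda=t$, which is precisely where the $\lambda$ in $\lambda$-$q$-nil-cleanness is needed. Beyond that, the argument only requires the routine checks $f^{2}=f$, resp.\ $f^{2}=I_{2}$, and the two determinant computations.
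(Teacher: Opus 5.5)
Your proof is correct and follows essentially the same route as the paper: reduce to rational canonical form, treat scalars trivially, write a companion matrix with nonzero trace $t$ as $t$ times a rank-one idempotent plus a square-zero matrix, and in the trace-zero case use an involution (a $q$-potent because $q$ is odd) plus a nilpotent. The only cosmetic difference is that the paper's involution $\begin{pmatrix} -1 & b+1 \\ 0 & 1 \end{pmatrix}$ works uniformly for every $b$, with nilpotent part $\begin{pmatrix} 1 & -1 \\ 1 & -1 \end{pmatrix}$, so the paper needs no separate $d=0$ subcase.
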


\begin{proof}
Take $A \in M_{2}(F)$. Without loss of generality, we may assume that $A$ is in a rational canonical form. There are three different cases:

\medskip

(a) $A = \lambda I_{2}$ for some $\lambda \in F$. Thus, $A = \lambda \cdot I_{2} + 0$ is a decomposition as a linear combination of a $q$-potent and a nilpotent.

\medskip

(b) $A = C(p)$ with $p = x^{2} - ax - b \in F[x]$ and $a \not = 0$. So,
\[
  A =
  \left(
  \begin{array}{cc}
  0 & b
  \\
  1 & a
  \end{array}
  \right)
  =
  a \cdot
  \left(
  \begin{array}{cc}
  0 & ba^{-1}
  \\
  0 & 1
  \end{array}
  \right)
  +
  \left(
  \begin{array}{cc}
  0 & 0
  \\
  1 & 0
  \end{array}
  \right)
\]
is a decomposition as a linear combination of a $q$-potent and a nilpotent.

\medskip

(c) $A = C(p)$ with $p = x^{2} - b \in F[x]$. Considering decomposition
\[
  A =
  \left(
  \begin{array}{cc}
  0 & b
  \\
  1 & 0
  \end{array}
  \right)
  =
  \left(
  \begin{array}{cc}
  -1 & b+1
  \\
  0 & 1
  \end{array}
  \right)
  +
  \left(
  \begin{array}{cc}
  1 & -1
  \\
  1 & -1
  \end{array}
  \right),
\]
one verifies that the last term is obviously nilpotent, whereas
$
\begin{pmatrix}
  -1 & b+1
  \\
  0 & 1
\end{pmatrix}
$
is a $q$-potent in any field $F$, as needed.
\end{proof}

Contrary to Proposition \ref{lambda-odd-N-M2}, the case of even $q$ presents strong restriction on a field $F$.

\begin{proposition}\label{lambda-even-N-M2}
Let $q \geq 2$ be an even integer. For a field $F$, the following four statements are equivalent:
\begin{enumerate}
\item $M_{2}(F)$ is quasi $q$-nil-clean;
\item $M_{2}(F)$ is $\lambda$-$q$-nil-clean;
\item $F$ is a perfect field of characteristic $2$.
\end{enumerate}
\end{proposition}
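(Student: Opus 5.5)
We prove the cycle $(2)\Rightarrow(1)\Rightarrow(3)\Rightarrow(2)$. The first implication is immediate from the chain of implications recorded above: a $\lambda f$ with $f^q=f$ and $\lambda$ a central unit is a quasi $q$-potent.

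For $(1)\Rightarrow(3)$ we first describe quasi $q$-potents $E\in M_2(F)$ with $q$ even. Write $E^q=\mu E$ with $\mu\in F^{*}$. Then $E$ is annihilated by $x^q-\mu x=x(x^{q-1}-\mu)$. The factor $x^{q-1}-\mu$ is coprime to $x$, so the minimal polynomial of $E$ has at most a simple root at $0$. Now take a matrix $A=E+N$ with $N$ nilpotent and compare traces and determinants.

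\emph{Characteristic not $2$.} Take $A=\begin{pmatrix}0&1\\ 1&0\end{pmatrix}$, or any matrix with eigenvalues $1,-1$. Over $\overline{F}$ we compare the spectrum of $A-N$ with that of $E$. We have $\tr E=\tr A=0$, and since $N$ is a nonzero nilpotent of rank $1$ (or zero), $\det E=\det(A-N)=\det A-\tr(\operatorname{adj}(A)N)$. We need $E$ to have eigenvalues $\alpha,-\alpha$ with $\alpha\neq0$ (or $E$ nilpotent). In the latter case $E=0$, since the minimal polynomial has no repeated root $0$; but $A$ is not nilpotent, so this is impossible. In the former case $\alpha^{q-1}=\mu=(-\alpha)^{q-1}=-\alpha^{q-1}$ because $q-1$ is odd, forcing $2\alpha^{q-1}=0$, a contradiction. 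Thus the traceless quasi $q$-potents are only $0$ and nilpotents, contradicting $\tr A=0$ with $A$ invertible. More precisely: any $E$ with $\tr E=0$ has eigenvalues $\pm\alpha$, which are ruled out unless $\alpha=0$, so $E$ would be nilpotent and hence $0$. The case $A=C(x^2-b)$ with $b$ a non-square, or $b=1$, works uniformly: $\tr A=0$, $A$ is invertible, and $A-E$ must be nilpotent. So $E$ is the only thing to control, and $\tr E=0$ kills it.

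\emph{Characteristic $2$, $F$ imperfect.} Pick $b\notin F^2$ and $A=C(x^2-b)$, so $\tr A=0$. In characteristic $2$ a traceless quasi $q$-potent has eigenvalues $\alpha,\alpha$. The minimal polynomial divides $x(x^{q-1}-\mu)$, which need not be separable in characteristic $2$; but if $E$ is non-scalar its minimal polynomial is $(x-\alpha)^2$ with $\alpha\neq0$, and $x^{q-1}-\mu$ with $q-1$ odd is separable, so a repeated root is impossible. Hence $E=\alpha I$ is scalar, and then $A-\alpha I$ nilpotent gives $\alpha^2=b$ with $\alpha\in F$, contradicting $b\notin F^2$. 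The main care in this step is handling $E$ with $\det$ conditions when $N\neq0$, and the argument above sidesteps that via the trace alone.

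For $(3)\Rightarrow(2)$, let $F$ be perfect of characteristic $2$ and take $A$ in rational canonical form, mirroring Proposition~\ref{lambda-odd-N-M2}. Scalars are trivial. If $A=C(x^2-ax-b)$ with $a\neq0$, the decomposition from case (b) there uses an idempotent, which is also a $q$-potent. If $a=0$, perfectness gives $b=c^2$, so $A=cI+(A-cI)$ with $(A-cI)^2=A^2-c^2I=0$, as needed. The hardest step is the necessity direction in characteristic $\neq2$, where the key observation is the parity argument $(-\alpha)^{q-1}=-\alpha^{q-1}$ for even $q$.
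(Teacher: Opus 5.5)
Your proof is correct and follows essentially the same route as the paper. In characteristic $\neq 2$ you use the same trace-plus-parity argument $(-\alpha)^{q-1}=-\alpha^{q-1}$ on a traceless invertible matrix. In the imperfect characteristic-$2$ case you use the same matrix $C(x^2-b)$ with $b\notin F^2$ to force a scalar quasi $q$-potent. For $(3)\Rightarrow(2)$ you use the same rational-canonical-form decompositions.

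Your separability argument for why $E$ must be scalar is cleaner than the paper's, though your aside that $x(x^{q-1}-\mu)$ ``need not be separable'' is false for even $q$. Also, for $b=0$ you should write $A=1\cdot 0+A$, since $c=0$ is not a unit.
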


\begin{proof}
It is quite obvious that (2) guarantees (1), so we skip details.

\medskip

$(1) \Rightarrow (3)$. Assume that $\Char F \not= 2$ and $M_{2}(F)$ is quasi $q$-nil-clean. Put
\[
  A =
  \left(
  \begin{array}{cc}
    1 & 0
    \\
    0 & -1
  \end{array}
  \right) \in M_{2}(F).
\]
By assumption, there exist $f \in M_{2}(F)$, $\lambda \in F^{*}$ and a nilpotent $N \in M_{n}(F)$ such that
$A = f + N$ and $f^{q} = \lambda f$. We thus have
\[
  0 = \tr A = \tr (f + N) = \tr f.
\]
Since $\Char F \not= 2$, we discover that either $f = 0$, or a matrix $f$ has two distinct nonzero eigenvalues $\pm \mu \in \overline{F}$. In the former case, $A$ is a nilpotent, a contradiction. In the latter case, we must have $$\lambda = (-\mu)^{q-1} = (-1)^{q-1} \cdot (\mu)^{q-1} = -\lambda,$$ a contradiction.

\medskip

Assume now that $\Char F = 2$, but $F$ is not a perfect field. Fix $b \in F$ that is not a square.
Set
\[
  A  =
  \left(
  \begin{array}{cc}
    0 & b
    \\
    1 & 0
  \end{array}
  \right) \in M_{2}(F).
\]
If there exist $f \in M_{2}(F)$, $\lambda \in F^{*}$ and a nilpotent $N \in M_{n}(F)$ such that
$A = f + N$ and $f^{q}=\lambda f$, then
\[
  0 = \tr A = \tr(f + N) = \tr f.
\]
Since $A$ is not a nilpotent, we extract $f \not=0$. But $\Char F = 2$ and evenness of $q$ imply diagonalizability of $f$, because $f$ is similar to $\mu I_{2}$ in $M_{n}(\overline{F})$ for some $\mu \in \overline{F}$. Hence, $f = \mu I_{2}$ and
\[
  N = A - f =
  \left(
  \begin{array}{cc}
    -\mu & b
    \\
    1 & -\mu
  \end{array}
  \right).
\]
However, $\det N = 0$ ensures $b = \lambda^{2}$, a contradiction.

\medskip

$(3) \Rightarrow (1)$. Let $A \in M_{2}(F)$. Without loss of generality, we may assume that $A$ is in a rational canonical form. Consider four basic cases:

\medskip

(a) $A = \lambda I_{2}$ for some $\lambda \in F$. Then, $A = \lambda \cdot I_{2} + 0$ is a decomposition as a linear combination of a $q$-potent and a nilpotent.

\medskip

(b) $A = C(p)$ with $p = x^{2} - ax - b \in F[x]$ and $a \not = 0$. Thus,
\[
  A =
  \left(
  \begin{array}{cc}
  0 & b
  \\
  1 & a
  \end{array}
  \right)
  =
  a \cdot
  \left(
  \begin{array}{cc}
  0 & ba^{-1}
  \\
  0 & 1
  \end{array}
  \right)
  +
  \left(
  \begin{array}{cc}
  0 & 0
  \\
  1 & 0
  \end{array}
  \right)
\]
is a decomposition as a linear combination of a $q$-potent and a nilpotent.

\medskip

(c) $A = C(p)$ with $p = x^{2} - b \in F[x]$ and $b \not = 0$. Since $F$ is a perfect field, there exists $z \in F$ such that $z^{2} = b$. So,
\[
  A =
  \left(
  \begin{array}{cc}
    0 & b
    \\
    1 & 0
  \end{array}
  \right)
  =
  z \cdot I_{2}
  +
  \left(
  \begin{array}{cc}
    z & b
    \\
    1 & z
  \end{array}
  \right)
\]
is a decomposition as a linear combination of a $q$-potent and a nilpotent.

\medskip

(d) $A = C(x^2)$. This matrix is apparently a nilpotent itself, as required.
\end{proof}

\subsection{Quasi $2$-nil-clean rings}

As it was mentioned earlier, a ring $R$ is quasi $2$-nil-clean if, and only if, it $R$ is $\lambda$-$2$-nil-clean.

Let $F$ be a field. Exploiting Proposition \ref{lambda-even-N-M2}, the ring $M_{2}(F)$ is quasi $2$-nil-clean if, and only if,$F$ is a perfect field of characteristic $2$. For matrix rings of order $n > 2$ we arrive at following much stricter criterion.

\begin{theorem}\label{lambda-2-N}
For a field $F$ and an integer $n > 2$, the following three points are equivalent:
\begin{enumerate}
\item $M_{n}(F)$ is quasi $2$-nil-clean;
\item $M_{n}(F)$ is nil-clean;
\item $F \cong \mathbb{F}_{2}$.
\end{enumerate}
\end{theorem}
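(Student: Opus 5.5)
The implication $(2)\Rightarrow(1)$ is immediate: a nil-clean decomposition $A=e+N$ is a quasi $2$-nil-clean one with $\lambda=1$. For $(3)\Rightarrow(2)$ I would simply cite the main theorem of \cite{BCDM13}, which says that $M_n(\mathbb{F}_2)$ is nil-clean for every $n$. So the real content is $(1)\Rightarrow(3)$. As observed just before the statement, quasi $2$-nil-clean is the same as $\lambda$-$2$-nil-clean. Hence every $A\in M_n(F)$ can be written as $A=\lambda e+N$ with $\lambda\in F^{*}$, $e^2=e$ and $N$ nilpotent. Equivalently, $\lambda^{-1}A$ is nil-clean, i.e.\ every matrix is a nonzero scalar multiple of a nil-clean matrix.

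The basic invariant is the trace. Since $\operatorname{tr}N=0$ and $\operatorname{tr}e=\operatorname{rk}e\cdot 1_F$, we get $\operatorname{tr}A=\lambda r$ with $r=\operatorname{rk}e\in\{0,\dots,n\}$. A second invariant should come from the nilpotency of $A-\lambda e$. On $\ker e$ this matrix agrees with $A$, and on $\operatorname{im}e$ it agrees with $A-\lambda I$. For diagonal test matrices with many zero entries this should force rank restrictions of the type $\dim\ker A\geq$ something tied to $n-r$, and $\dim\ker(A-\lambda I)\geq$ something tied to $r$. I would make this precise via Sylvester's rank inequality: if $A-\lambda e$ is nilpotent, neither $A$ restricted to $\ker e$ nor $A-\lambda I$ restricted to $\operatorname{im} e$ can be injective, unless the corresponding subspace is zero.

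\emph{Characteristic $2$.} Suppose $\Char F\neq 2$ and take $A=\diag(1,-1,0,\dots,0)$. Then $0=\operatorname{tr}A=\lambda r$, so $r\equiv 0$ in $F$. In characteristic $0$ this gives $e=0$, whence $A$ is nilpotent, a contradiction. In characteristic $p\geq 3$ it only gives $p\mid r$. Here I would use the rank argument to show that $r\in\{0,n\}$ are the only possibilities: at most one direction can be absorbed by the eigenvalue $\lambda$, and the kernel of $A$ has dimension $n-2$. The case $e=I$ means $A-\lambda I$ is nilpotent, which is impossible because $A$ has the two distinct eigenvalues $\pm1$. If the rank argument turns out not to pin $r$ down completely, I would instead vary the test matrix, e.g.\ $\diag(1,-1,1,-1,\dots)$ or the companion matrix of $x^n-1$ plus a nilpotent, to obtain incompatible trace conditions.

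\emph{$F=\mathbb{F}_2$.} Now let $\Char F=2$ and suppose $a\in F\setminus\{0,1\}$. Take $A=\diag(1,a,0,\dots,0)$, which is where $n\geq 3$ is used. On one side we have $\operatorname{tr}A=1+a=\lambda r$. On the other side, since $A$ has rank $2$ and three distinct eigenvalues $0,1,a$, nilpotency of $A-\lambda e$ should force $r\leq 2$ and, as above, force $\lambda$ to be an eigenvalue of $A$ realized on all of $\operatorname{im}e$. That leads to $\lambda\in\{1,a\}$ with $r=1$. Then $1+a\in\{1,a\}$, i.e.\ $a=0$ or $1=0$, a contradiction; the cases $r=0$ and $r=2$ would be handled directly. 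The main obstacle is exactly this rank/eigenvalue lemma: when $e$ and $A$ do not commute, one must carefully justify that nilpotency of $A-\lambda e$ forces the constraints I claimed on $\operatorname{rk}e$ and $\lambda$. If this cannot be proved cleanly, I would fall back on choosing $A$ as a companion matrix and applying the similarity-invariant counting arguments of \cite{BCDM13}.
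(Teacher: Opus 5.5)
\textbf{There is a genuine gap in $(1)\Rightarrow(3)$.} Your reductions $(2)\Rightarrow(1)$ and $(3)\Rightarrow(2)$ are fine. In characteristic $2$, however, the gap cannot be closed with the matrix you chose. If $\Char F=2$ and $a\notin\{0,1\}$, then $\diag(1,a)$ has distinct eigenvalues, so it is similar to
\[
\begin{pmatrix} 0 & -a\\ 1 & 1+a\end{pmatrix}=(1+a)\begin{pmatrix} 0 & -a(1+a)^{-1}\\ 0 & 1\end{pmatrix}+\begin{pmatrix} 0&0\\ 1&0\end{pmatrix}.
\]
This is case (b) in the proof of Proposition~\ref{lambda-even-N-M2}. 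Padding with zeros gives $\diag(1,a,0,\dots,0)=(1+a)e+N$ in $M_n(F)$, with $e$ a rank-one idempotent and $N$ nilpotent. So your claim that nilpotency forces $\lambda\in\{1,a\}$ is false: $\lambda=1+a$ works, and the relation $1+a=\lambda\cdot 1$ is perfectly consistent. No contradiction can ever come from this matrix. More generally, any $2\times2$ block padded by zeros is useless here, since $M_2(F)$ is quasi $2$-nil-clean for every perfect field of characteristic $2$, e.g.\ $\mathbb{F}_4$.

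\textbf{The rank lemma is also false.} With matrix units, take $A=E_{21}+E_{22}=e+N$ where $e=E_{22}$ and $N=E_{21}$. Then $\ker e$ is spanned by the first standard basis vector, which $A$ sends to the second, so $A$ is injective on $\ker e$. The correct elementary tool is intersecting eigenspaces. If $0\neq v\in V_\mu(A)\cap V_k(e)$, then $Nv=(\mu-\lambda k)v$, so $\mu=\lambda k$. Such a $v$ exists whenever $\dim V_\mu(A)+\dim V_k(e)>n$. For your characteristic-$\neq2$ matrix $\diag(1,-1,0,\dots,0)$ this gives $1\le r\le 2$. That is the opposite of your claim $r\in\{0,n\}$, which are exactly the excluded values. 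Then $\lambda r=0$ gives the contradiction, so that half is repairable.

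\textbf{The missing idea} is to choose a test matrix whose \emph{nonzero} eigenvalue has an eigenspace of dimension $>n/2$; this is where $n>2$ really enters. The paper takes $A=\diag(I_{n-1},b)$.
\begin{itemize}
\item One of $V_0(e),V_1(e)$ has dimension $\ge n/2$, and $\dim V_1(A)=n-1>n/2$. So there is a common eigenvector $v$ with $Av=v$ and $ev=kv$.
\item Then $1=\lambda k$ forces $k=\lambda=1$, so $A=e+N$ is an honest nil-clean decomposition.
\item Hence $(n-1)+b=\tr A=\rk e\cdot 1_F$, which places every $b\in F$ in the prime field.
\item If $\Char F\neq 2$, take $b=-1$: this forces $\rk e\le n-2$, so $V_1(A)\cap V_0(e)\neq 0$ and $N$ would have eigenvalue $1$, a contradiction.
\end{itemize}
Therefore $F\cong\mathbb{F}_2$.
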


\begin{proof} The equivalence $(2) \Leftrightarrow (3)$ was proved in \cite{BCDM13}. Since every nil-clean ring is quasi $2$-nil-clean, it is enough to establish only that (1) forces (3).

$(1) \Rightarrow (2)$. Assume that $M_{n}(F)$ is quasi $2$-nil-clean. Choose an arbitrary element $b \in F$ and take the matrix
\[
  A :=
  \left(
  \begin{array}{c|c}
    I_{n-1} & 0
    \\
    \hline
    0 & b
  \end{array}
  \right) \in M_{n}(F).
\]
By hypothesis, there exist an idempotent $e \in M_{n}(F)$, $\lambda \in F$ and a nilpotent $N \in M_{n}(F)$ such that
$A = \lambda e + N$. Since $\dim V_{1}(A) = n-1 > n/2$, the matrices $A$ and $e$ have common eigenvector $v$: $Av = v$ and $ev = kv$ with $k \in \{0,1\}$. Hence, $v$ is a eigenvector for $N$. Computing $Av$, we obtain
\[
  1 = \lambda k.
\]
Consequently, $k = \lambda = 1$. Thus,
\[
  A = e + N
\]
is a decomposition as a sum of an idempotent and a nilpotent. It thereby follows that $\tr (A) = \tr e$ lies in a prime subfield of $F$. However, $b$ can be any element of a field $F$. Therefore, $F$ is a prime field, as desired.

\medskip

Assume that $\Char F \not = 2$ and put $b = -1$. By computing traces, we deduce
\[
  (n-2) \cdot 1_{F} = \rk e \cdot 1_{F},
\]
i.e., $\rk e \equiv n-2 \mod \Char F$. Since $\Char F \not = 2$, we infer $\rk e \leq n-2$. It thus follows that $$\dim V_{1} (A) + \dim V_{0}(e) \geq (n-1) + 2$$ and, therefore, $A$ and $e$ have common eigenvector $v$ such that $Av = v$ and $ev = 0$. So, $Nv = (A-e)v = v$, a contradiction. Consequently, $F$ is a prime field of characteristic $2$, that is, $F \cong \mathbb{F}_{2}$, as wanted.
\end{proof}

\subsection{Quasi $3$-nil-clean rings}

We present here some common generalizations to some of the results from \cite{Su}. Our chief result here is the following criterion.

\begin{theorem}\label{lambda-3-N}
For a field $F$ and an integer $n \geq 3$, the following four issues are equivalent:
\begin{enumerate}
\item $M_{n}(F)$ is quasi $3$-nil-clean;
\item $M_{n}(F)$ is $\lambda$-$3$-nil-clean;
\item $M_{n}(F)$ is $3$-nil-clean;
\item $F \cong \mathbb{F}_{2}$ or $F \cong \mathbb{F}_{3}$.
\end{enumerate}
\end{theorem}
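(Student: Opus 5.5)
The implications $(3)\Rightarrow(2)\Rightarrow(1)$ are formal: a tripotent is $\lambda f$ with $\lambda=1$, and $\lambda f$ with $f^3=f$ and $\lambda$ central is a quasi $3$-potent. For $(4)\Rightarrow(3)$ I would rely on known results rather than build decompositions from scratch. For $F\cong\mathbb{F}_2$, $M_n(\mathbb{F}_2)$ is nil-clean by \cite{BCDM13}, and every idempotent is a tripotent, so it is $3$-nil-clean. For $F\cong\mathbb{F}_3$ I would invoke the known fact that every matrix over $\mathbb{F}_3$ is a sum of a tripotent and a nilpotent. If a self-contained argument is wanted, I would reduce to rational canonical form. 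I would then treat each cyclic block $C(p)$ as in the proof of Proposition~\ref{lambda-odd-N-M2}: split off a nilpotent subdiagonal part and leave a triangular matrix with diagonal entries in $\{0,\pm1\}$ whose trace equals $\tr(p)$. Such a matrix is tripotent, since over $\mathbb{F}_3$ every residue is a sum of $0,\pm1$ entries.

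The real content is $(1)\Rightarrow(4)$, and I would model it on the proof of Theorem~\ref{lambda-2-N}. Fix $b\in F$ and set $A=\diag(I_{n-1},b)$. Write $A=f+N$ with $f^3=\lambda f$, $\lambda\in F^*$, and $N$ nilpotent. Over $\overline{F}$, with $\mu^2=\lambda$, the space $\overline{F}^n$ decomposes into the generalized eigenspaces of $f$ for $0,\mu,-\mu$. When $\Char F\neq 2$, the polynomial $x(x^2-\lambda)$ is separable, so $f$ is diagonalizable and these are genuine eigenspaces. One of them has dimension $\geq n/3$, and since $\dim V_1(A)=n-1$ it meets $V_1(A)$ nontrivially as soon as $n-1+n/3>n$, i.e.\ $n>3$. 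A common eigenvector $v$ with $Av=v$ and $fv=kv$ forces $Nv=(1-k)v$, hence $k=1$, so $\pm\mu=1$ and $\lambda=1$. Then $f$ is an honest tripotent, so $\tr A=n-1+b=\tr f$ lies in the prime subfield, and since $b$ is arbitrary, $F$ is prime.

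To exclude $\mathbb{F}_p$ with $p\geq5$, I would take $b=2$ and again use intersection counting. If $\dim V_0(f)+\dim V_{-1}(f)\geq 2$, this sum meets the $(n-1)$-dimensional space $V_1(A)$ nontrivially. I would then pass to a genuine eigenvector of $f$ in that intersection, which gives $Nv=v$ or $Nv=2v$; since $\Char F\neq 2$ this contradicts nilpotency. Hence $\tr f\in\{n,\,n-1,\,n-2\}$, which forces $b\in\{1,0,-1\}$, contradicting $b=2$ in $\mathbb{F}_p$ for $p\geq 5$. So only $\mathbb{F}_3$ survives in odd characteristic.

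I expect the main obstacles to be the edge cases. First, $n=3$: the pigeonhole bound $n/3$ is too weak there, so I would instead pick an eigenvalue of $f$ of multiplicity $\geq2$, or handle the case where $f$ has three distinct eigenvalues directly via the trace and determinant of $f$ and of $A-f$. Second, $\Char F=2$: here $f$ need not be diagonalizable. I would first show $\mu\in F$ using a common eigenvector inside $V_1(A)\cap\ker(f^2-\lambda)$ or $V_1(A)\cap\ker f$, whose dimensions sum to at least $n$. I would then argue as before that $\lambda=1$, so $f$ is tripotent, and conclude by the trace argument that $F$ is prime, i.e.\ $F\cong\mathbb{F}_2$.
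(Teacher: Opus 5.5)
Your route for $(1)\Rightarrow(4)$ is the paper's: the test matrix $A=\diag(I_{n-1},b)$, a common eigenvector in $V_1(A)$ forcing $\lambda=1$, traces forcing $F$ to be prime, then eigenspace bounds to exclude large characteristic. Your $b=2$ in place of the paper's $b=-3$ is harmless. However, one step recurs and it fails. You treat a nonzero vector of $V_1(A)\cap W$, where $W$ is a \emph{sum} of eigenspaces or a \emph{generalized} eigenspace of $f$, as if it were a common eigenvector of $A$ and $f$.

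When excluding $p\ge5$, the hypothesis $\dim V_0(f)+\dim V_{-1}(f)\ge2$ only yields some $v=v_0+v_{-1}\in V_1(A)$. Nothing lets you pass from this to an $f$-eigenvector, and in fact the bound $\dim V_0(f)+\dim V_{-1}(f)\le1$ is false. Take $D=\diag(1,0,-1)$ and
\[
B=\begin{pmatrix}1&1&-4\\0&2&-4\\0&1&-3\end{pmatrix},\qquad B-D=\begin{pmatrix}0&1&-4\\0&2&-4\\0&1&-2\end{pmatrix}.
\]
Since $B-I$ has rank one and trace $-3$, $B$ is similar to $\diag(1,1,-2)$ over $\mathbb{Q}$ or over $\mathbb{F}_p$ with $p\ge5$. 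Also $B-D$ is nilpotent. Hence $\diag(I_{n-1},-2)=f+N$ with $f$ a tripotent satisfying $\dim V_0(f)=\dim V_{-1}(f)=1$. Here $V_1(B)\cap\langle e_2,e_3\rangle$ is spanned by $(0,4,1)^T$, which is not a $D$-eigenvector.

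The repair is the paper's. If $\dim V_0(f)\ge2$ or $\dim V_{-1}(f)\ge2$, that eigenspace by itself meets $V_1(A)$ in a genuine common eigenvector. So each has dimension at most $1$, and $\tr f\in\{n,n-1,n-2,n-3\}$. Your $b=2$ survives this weaker bound: $n+1=n-k$ with $0\le k\le3$ forces $\Char F\mid k+1\le4$. This same computation also rules out $F=\mathbb{Q}$, which you never explicitly exclude.

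The same conflation undermines your characteristic-$2$ sketch. There $\ker(f^2-\lambda)=\ker(f-\mu)^2$ is only a generalized eigenspace. A vector of $V_1(A)\cap\ker(f^2-\lambda)$ therefore gives no relation $Nv=(1-k)v$. The detour through ``$\mu\in F$'' is neither justified nor needed, since eigenvectors over $\overline{F}$ suffice. What does work is the following:
\begin{enumerate}
\item $V_1(A)\cap\ker f=0$ (else $Nv=v$), which forces $\dim\ker f\le1$.
\item Since $x$ and $x^2-\lambda$ are coprime, the Jordan blocks at $\mu$ have size at most $2$.
\item Hence $\dim\ker(f-\mu)\ge\lceil (n-1)/2\rceil\ge2$ for $n\ge4$, and a genuine common eigenvector gives $\mu=1$.
\end{enumerate}
For $n=3$, however, $f\sim J_2(\mu)\oplus(0)$ can occur with $\mu\neq1$. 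For instance, $\diag(1,1,0)$ over $\mathbb{F}_4=\mathbb{F}_2(\omega)$ admits such a decomposition with $\lambda=\omega^2$. So ``$\lambda=1$'' is not available for every $b$. You must dispose of this case by the trace, as the paper does: $\tr f=2\mu=0$ forces $b=0$, which lies in $\mathbb{F}_2$ anyway.

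Finally, citing \cite{A17} for $(4)\Rightarrow(3)$ is exactly what the paper does. Your optional self-contained sketch, though, is wrong as stated. A triangular matrix with diagonal entries in $\{0,\pm1\}$ need not be tripotent, e.g.\ $\left(\begin{smallmatrix}1&1\\0&1\end{smallmatrix}\right)$. Deleting the subdiagonal of $C(p)$ leaves a rank-one $U$ with $U^2=\tr(p)\,U$, which is not tripotent when $\tr(p)=0$ and $U\neq0$. This is why case (c) of Proposition~\ref{lambda-odd-N-M2} needs a different nilpotent.
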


\begin{proof} The equivalence $(3) \Leftrightarrow (4)$ was established in \cite{A17}. Since every $3$-nil-clean ring is $\lambda$-$3$-nil-clean, and $\lambda$-$3$-nil-clean ring is quasi $3$-nil-clean, it is enough to prove that (1) gives (4).

$(1) \Rightarrow (4)$. Assume that $M_{n}(F)$ is quasi $3$-nil-clean. Take an arbitrary element $b \in F$ and choose the matrix
\[
  A :=
  \left(
  \begin{array}{c|c}
    I_{n-1} & 0
    \\
    \hline
    0 & b
  \end{array}
  \right) \in M_{n}(F).
\]
By hypothesis, there exist $f \in M_{n}(F)$, $\lambda \in F^{*}$ and a nilpotent $N \in M_{n}(F)$ such that
$A = f + N$ and $f^{3} = \lambda f$.

\medskip

\noindent{\bf Claim 1.} The field $F$ is prime.

\medskip

Assume that matrices $A$ and $f$ have common eigenvector $v \in \overline{F}^{n}$ such that $Av = v$. Hence, $v$ is an eigenvector for $N$ and $fv = (A-N)v = v$. From $f^{3} = \lambda f$, it immediately follows that $\lambda=1$ and $f$ is a tripotent. Therefore, $b$ is in a prime subfield of $F$ as $$b = \tr(A) - (n-1) \cdot 1_{F} =\tr(f) - (n-1) \cdot 1_{F}.$$

Therefore it is enough to prove that $A$ and $f$ have common eigenvector $v \in \overline{F}^{n}$ such that $Av = v$. Consider two possible cases:

\medskip

(a) $\Char F \not = 2$.

\medskip

In this case, $f$ is diagonalizable over $\overline{F}$ and has at most $3$ pairwise distinct eigenvalues in $\overline{F}$, namely $0$, $\mu$ and $-\mu$, where $\mu^{2}=\lambda$. Since $\dim V_{1}(A) = n-1$, for all $n>3$ matrices $A$ and $f$ have common eigenvector $v \in \overline{F}^{n}$ with $Av = v$. Thus, all elements $b \in F$ are in a prime subfield of $F$, i.e., $F$ is a prime field, as asserted.

\medskip

Assume now that $n=3$. In this case, either of $A$ and $f$ have common eigenvector $v \in \overline{F}^{n}$ with $Av = v$, or $f$ is similar in $M_{n}(\overline{F})$ to $\diag(-\mu, 0, \mu)$. In the latter case, one finds that $\tr (A) = \tr(f) = 0$. In the former case, $\tr (A)$ is in a prime subfield of $F$. Therefore, $F$ is a prime field too, as claimed.

\medskip

(b) $\Char F = 2$. In this case, $f$ has at most $2$ pairwise distinct eigenvalues in $\overline{F}$, namely $0$ and $\mu$, where $\mu^{2}=\lambda$. Also, one knows that the Jordan normal form of $f$ is triangular with blocks of sizes $1$ and $2$.

\medskip

If $n>5$, matrices $A$ and $f$ necessary have common eigenvector $v \in \overline{F}^{n}$ with $Av = v$.

\medskip

If $n=4$, matrices $A$ and $f$ necessary have common eigenvector $v \in \overline{F}^{n}$ with $Av = v$, unless $f$ is similar in $M_{n}(\overline{F})$ to the matrix
\[
  \left(
  \begin{array}{cc|cc}
  0 & 1 & 0 & 0
  \\
  0 & 0 & 0 & 0
  \\
  \hline
  0 & 0 & \mu & 1
  \\
  0 & 0 & 0 & \mu
  \end{array}
  \right).
\]
In the latter case, $\tr(A) = \tr(f) = 0$ and $b$ is in a prime subfield of $F$. Thus, we conclude that $F$ is a prime field.

\medskip

If $n=3$, matrices $A$ and $f$ either have common eigenvector $v \in \overline{F}^{n}$ with $Av = v$, or $f$ is similar in $M_{n}(\overline{F})$ to one of the following matrices
\[
  \left(
  \begin{array}{cc|c}
  \mu & 1 & 0
  \\
  0 & \mu & 0
  \\
  \hline
  0 & 0 & 0
  \end{array}
  \right),
  \quad
  \left(
  \begin{array}{cc|c}
  0 & 1 & 0
  \\
  0 & 0 & 0
  \\
  \hline
  0 & 0 & \mu
  \end{array}
  \right).
\]
If $f$ is similar to the first matrix, then $\tr(A) = \tr(f) = 0$ and $b$ is in a prime subfield of $F$. If $f$ is similar to the second matrix, then $b = \tr(A) = \mu$. So, all elements of the field $F$, except perhaps one, lie in a prime subfield. Consequently, the field $F$ is prime indeed.

\medskip

\noindent{\bf Claim 2.} The field $F$ is isomorphic either to $\mathbb{F}_{2}$ or $\mathbb{F}_{3}$.

\medskip

Since $F$ is a prime field, it is enough to show only that $\Char F \not \in \{2,3\}$. To that end, assume the contrary. Consider two major cases:

\medskip

(a) $n > 3$. As we showed earlier, $f$ is a tripotent. Since $\dim V_{1}(A) = n-1$, both $\dim V_{-1} (f)$ and $\dim V_{0} (f)$ are at most one, as for otherwise matrices $A$ and $f$ would have common eigenvector $v$ with $Av = v$ and $fv \not = v$. Therefore, the tripotent $f$ is similar to the one of the following four matrices
\[
  \left(
  \begin{array}{c|c|c}
    I_{n-2} & 0 & 0
    \\
    \hline
    0 & -1 & 0
    \\
    \hline
    0 & 0 & 0
  \end{array}
  \right),
  \quad
  \left(
  \begin{array}{c|c}
    I_{n-1} & 0
    \\
    \hline
    0 & -1
  \end{array}
  \right),
  \quad
  \left(
  \begin{array}{c|c}
    I_{n-1} & 0
    \\
    \hline
    0 & 0
  \end{array}
  \right),
  \quad
  I_{n}.
\]
Thus, $\tr f \in \{(n-3) \cdot 1_{F}, (n-2) \cdot 1_{F}, (n-1) \cdot 1_{F}, n \cdot 1_{F}\}$.

Next, put $b = -3 \cdot 1_{F}$. We now derive that $\tr f = (n-4) \cdot 1_{F}$ and $n-4 \equiv n-k \mod \Char F$, where $k \in \{0,1,2,3\}$. It follows at once that $\Char F \mid 4-k$, a contradiction.

\medskip

(b) $n=3$. As it was proved earlier, if $\tr(A) \not = 0$, then $A = f + N$ is a decomposition as a sum of a tripotent and a nilpotent. If $\tr(A) = 0$, then $A$ is a tripotent itself and we can put $f = A$, $N = 0$. Similarly to the case (a), we detect $\Char F \mid 4-k$ for $b \not = -3 \cdot 1_{F}$, a contradiction which concludes the entire argumentation.
\end{proof}

It was shown earlier that, for $q \in \{2,3\}$ and a field $F$, the ring $M_{n}(F)$ is $\lambda$-$q$-nil-clean if, and only if, $M_{n}(F)$ is quasi $q$-nil-clean. That is why, the following question is sensible.

\begin{problem}
Let $F$ be a field and let $q \geq 4$ be an integer. Is it true that, for every $n\geq 2$, the ring $M_{n}(F)$ is $\lambda$-$q$-nil-clean if, and only if, $M_{n}(F)$ is quasi $q$-nil-clean?
\end{problem}

\subsection{Some remarks on the general case}

It is quite hard to say anything specific about general case. In fact, take integers $q > 3$ and large enough $n$. Assume, moreover, that $M_{n}(F)$ is quasi $q$-nil-clean and $\overline{F}$ is an algebraic closure of $F$.

Besides, take arbitrary $b \in F$ and put
\[
  A =
  \left(
  \begin{array}{c|c}
    I_{n-1} & 0
    \\
    \hline
    0 & b
  \end{array}
  \right) \in M_{n}(F).
\]
By hypothesis, there exist $f \in M_{n}(F)$, $\lambda \in F^{*}$ and a nilpotent $N \in M_{n}(F)$ such that
$A = f + N$ and $f^{q} = \lambda f$. Note that $f$ has at most $q$ pairwise distinct eigenvalues in $\overline{F}$ and $\dim V_{1}(A) = n-1$. Furthermore, consider two possible cases as follows:

\medskip

(a) $\Char F  = 0$.

\medskip

In this case, quasi $q$-potent $f$ is diagonalizable if $M_{n}(\overline{F})$. Thus, for $n > q$, matrices $A$ and $f$ have common eigenvector $v \in \overline{F}^{n}$ with $Av = v$. It therefore follows that $f$ is a $q$-potent and
\[
  A = f + N
\]
is a decomposition as a sum of a $q$-potent and a nilpotent. However, trace of a $q$-potent matrix $f \in M_{n}(\overline{F})$ can can only take a finite number of values. Hence, $\tr A = \tr f$ tells us that $|F| < \infty$, a contradiction.

\medskip

(b) $\Char F = p>0$.

\medskip

The matrix $f$ has Jordan normal form over $\overline{F}$. Take an arbitrary eigenvalue $\mu \in \overline{F}$ of $f$ and consider one of the corresponding Jordan blocks $$J_{m}(\mu) = \mu I_{m} + J_{m}(0) \in M_{m}(\overline{F}).$$ Put $p^{k}$ as the greatest nonnegative power of $p$ such that $p^{k} \mid q-1$. Since $J_{m}(\mu)^{q} = \lambda J_{m}(\mu)$, we have $1 \leq m \leq p^{k}$. Likewise, since there are $1+\frac{q-1}{p^k}$ different $q$-potents in $\overline{F}$, for any $n > (1+\frac{q-1}{p^k}) \cdot p^k = (q-1) + p^k$, the matrices $A$ and $f$ have common eigenvector $v \in \overline{F}^{n}$ with $Av = v$. It thus follows that $f$ is a $q$-potent and
\[
  A = f + N
\]
is a decomposition as a sum of a $q$-potent and a nilpotent. In particular, $\tr A = \tr f$. If, for some eigenvalue $1 \not = \mu \in \overline{F}$ of $f$, we have $\dim V_{\mu}(f) > 1$, then $1-\mu$ is an eigenvalue of $N$, a contradiction. Consequently, $\dim V_{\mu}(f) \leq 1$.

If now $p \nmid q-1$, then every Jordan block of $f \in M_{n}(\overline{F})$ is scalar and so $\tr f$ cannot take more than $2^{q-1}$ different values. If, however, $p \mid q-1$, then there are $1+\frac{q-1}{p^k}$ different $q$-potents in $\overline{F}$. For an eigenvalue $0,1 \not = \mu \in \overline{F}$ it is pretty clear that $\tr J_{m}(\mu) \in \mu\mathbb{F}_{p}$. Since there is no Jordan block $J_{m}(0)$ for $m>1$ in any $q$-potent, $\tr f$ cannot take more than $2 \cdot p^{-1 + (q-1)/p^k}$ different values. At the same time, $\tr f$ as a sum of $q$-potents lies in a splitting field of $x^q-x$ over $\mathbb{F}_{p}$.

\medskip

Thus, summarizing the conclusions presented above, we have shown the following general claim.

\begin{proposition}
\label{general-case-quasi-q-nil}
Let $F$ be a field, and let $q > 3$ be an integer.
\begin{enumerate}
\item
If for some $n > q$ the ring $M_{n}(F)$ is quasi $q$-nil-clean, then $\Char F$ is finite.

\item Let $\Char F = p < \infty$ and $p^{k}$ is the greatest non-negative power of $p$ such that $p^{k} \mid q-1$. If, for some $n > (q-1) + p^{k}$, the ring $M_{n}(F)$ is quasi $q$-nil-clean, then $F$ is a finite field and $F$ is a subfield of splitting field of $x^q-x$ over $\mathbb{F}_{p}$. Moreover,
\begin{enumerate}
\item if $\mathrm{gcd}(p, q-1) = 1$, then $|F| \leq 2^{q-1}$;
\item if $p \mid q-1$, then $|F| \leq 2 \cdot p^{-1 + (q-1)/p^k}$.
\end{enumerate}
\end{enumerate}
\end{proposition}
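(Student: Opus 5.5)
The plan is to reuse the test matrix $A=\diag(I_{n-1},b)$ from the discussion preceding the statement. For arbitrary $b\in F$ we write $A=f+N$ with $f^{q}=\lambda f$ and $N$ nilpotent. The first goal is to show that $f$ and $A$ share an eigenvector $v\in\overline{F}^{n}$ with $Av=v$. Once this holds, $Nv=(A-f)v$ forces $fv=v$, since $N$ has only the eigenvalue $0$. Then $f^{q}v=\lambda fv$ gives $\lambda=1$, so $f$ is a genuine $q$-potent and $\tr A=\tr f$. Hence $b=\tr f-(n-1)\cdot 1_F$ ranges over a set of traces of $q$-potent matrices in $M_n(\overline{F})$. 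Bounding that set bounds $F$.

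To get the common eigenvector, we bound $n-\dim V_{1}(A)+\sum_{\mu}\dim V_{\mu}(f)$ against $n$. Since $\dim V_{1}(A)=n-1$, it suffices that $f$ has an eigenspace of dimension at least $2$: it then meets $V_{1}(A)$ nontrivially.

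\textbf{Part (1).} In characteristic $0$ the polynomial $x^{q}-\lambda x$ is separable, so $f$ is diagonalizable over $\overline{F}$ with at most $q$ eigenvalues. For $n>q$ some eigenspace has dimension at least $2$, which gives the common eigenvector. A $q$-potent $f\in M_n(\overline{F})$ has eigenvalues among the roots of $x^{q}-x$ with multiplicities summing to $n$, so $\tr f$ takes only finitely many values. Therefore $F$ would be finite, contradicting $\Char F=0$.

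\textbf{Part (2).} Now let $\Char F=p$. Write $q-1=p^{k}m$ with $p\nmid m$, so $x^{q}-\lambda x=x\,(x^{m}-\lambda')^{p^{k}}$. The minimal polynomial of $f$ therefore has at most $1+m$ distinct roots. The root $0$ is simple, and every other root has multiplicity at most $p^{k}$, so Jordan blocks have size at most $p^{k}$. An eigenvalue with geometric multiplicity $1$ has algebraic multiplicity at most $p^{k}$. Hence if every eigenspace were one-dimensional, we would have $n\le 1+m p^{k}=q$. The statement's hypothesis $n>(q-1)+p^{k}$ exceeds this bound, so the common eigenvector exists and $f$ is a $q$-potent with $\tr A=\tr f$.

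Next, if some eigenvalue $\mu\neq 1$ of $f$ had $\dim V_{\mu}(f)\ge 2$, then $V_{\mu}(f)\cap V_{1}(A)\neq 0$. On that vector $N$ would act as $1-\mu\neq 0$, which is impossible. So all eigenvalues $\mu\neq 1$ have one-dimensional eigenspaces, hence a single Jordan block of size at most $p^{k}$. The eigenvalue $0$ carries only a $1\times1$ block. The trace of a block $J_{s}(\mu)$ is $s\mu\in\mu\mathbb{F}_{p}$.

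\textbf{Counting.} Consequently $\tr f=c\cdot 1+\sum_{\mu\neq 0,1}s_\mu\mu$, where each $\mu$ is a root of $x^{q}-x$. This lies in the splitting field $K$ of $x^{q}-x$ over $\mathbb{F}_p$, so $F\subseteq K$ and $F$ is finite. To obtain the explicit bounds we count the possible values of $\tr f$.

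\textbf{Case (a): $\gcd(p,q-1)=1$.} Then $k=0$, all blocks are scalar, and each nonunit root among the $q$ roots of $x^{q}-x$ appears with multiplicity $0$ or $1$. The multiplicity of $1$ is then determined by $n$, so there are at most $2^{q-1}$ subsets and hence at most $2^{q-1}$ traces.

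\textbf{Case (b): $p\mid q-1$.} Each of the roots $\mu\neq 0,1$ contributes an element of $\mu\mathbb{F}_p$, and the count is to be matched to the stated $2\cdot p^{-1+(q-1)/p^{k}}$. This matching is the main obstacle. The approach is to use that the roots come in Frobenius-related families and to track the coefficient of $1$ separately, mirroring the heuristic in the text preceding the statement. If the exact constant resists, we would settle for a cruder finite bound and adjust.
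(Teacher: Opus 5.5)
Your argument follows the paper's route: the test matrix $\diag(I_{n-1},b)$, a common eigenvector in $V_1(A)$ forcing $\lambda=1$, and then counting traces of $q$-potents with $\dim V_\mu(f)\le 1$ for $\mu\neq 1$. Part (1), the claim that $F$ is finite and lies in the splitting field of $x^q-x$, and Case (a) are all fine. Your remark that $0$ only carries $1\times 1$ blocks even shows that $n>q$ already suffices for the common eigenvector, which is sharper than the paper's $(q-1)+p^k$. The genuine gap is Case (b). You never establish the bound $2\cdot p^{-1+(q-1)/p^k}$, and a ``cruder finite bound'' would not prove the statement as written.

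The missing idea is the one you already used in Case (a): the multiplicity $c$ of the eigenvalue $1$ is not a free parameter. Put $m=(q-1)/p^k$. The roots of $x^q-x=x(x^m-1)^{p^k}$ are $0$ and the $m$ distinct $m$-th roots of unity, so there are exactly $m-1$ roots $\mu\neq 0,1$. Let $z\in\{0,1\}$ be the multiplicity of $0$, and let $s_\mu\in\{0,\dots,p^k\}$ be the size of the single Jordan block at such a $\mu$. Then $c=n-z-\sum_\mu s_\mu$, hence
\[
\tr f=(n-z)\cdot 1_F+\sum_{\mu\neq 0,1}s_\mu(\mu-1).
\]
Since $p\cdot 1_F=0$, only $z$ and the residues $s_\mu \bmod p$ matter, which gives at most $2\cdot p^{m-1}$ possible values. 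Because $b\mapsto b+(n-1)\cdot 1_F=\tr f$ is injective, $|F|\le 2\cdot p^{m-1}$. No Frobenius orbits are needed. The point is simply not to count $c \bmod p$ independently: that only yields $p^{m}$, which is too weak when $p>2$.
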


In other words, for any $q > 3$, there exists a finite set $A_{q}$ consisting of finite fields such that, for any large enough $n$, the quasi $q$-nil-cleanness of $M_{n}(F)$ implies $F \in A_{q}$ up to an isomorphism. However, it is still unclear whether every element of $F$ has to be a $q$-potent or not. Nevertheless, Proposition~\ref{general-case-quasi-q-nil} allows us to answer this problem for $q=4$.

\begin{corollary}
\label{quasi-4}
For a field $F$ and an integer $n \geq 7$, the following four issues are equivalent:
\begin{enumerate}
\item $M_{n}(F)$ is quasi $4$-nil-clean;
\item $M_{n}(F)$ is $\lambda$-$4$-nil-clean;
\item $M_{n}(F)$ is $4$-nil-clean;
\item $F \cong \mathbb{F}_{2}$ or $F \cong \mathbb{F}_{4}$.
\end{enumerate}
\end{corollary}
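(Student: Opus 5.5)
The implications (3)$\Rightarrow$(2)$\Rightarrow$(1) are immediate from the chain of implications recorded before Proposition~\ref{lambda-odd-N-M2}. For (4)$\Rightarrow$(3), note that every element of $\mathbb{F}_2$ and of $\mathbb{F}_4$ satisfies $x^4=x$. I expect $M_n(\mathbb{F}_4)$ and $M_n(\mathbb{F}_2)$ to be $4$-nil-clean by the known characterization of $q$-nil-clean matrix rings over fields; this is the analogue of the results of \cite{BCDM13} and \cite{A17} already cited in Theorems~\ref{lambda-2-N} and \ref{lambda-3-N}. I would quote that result. If no convenient reference exists, I would prove it by rational canonical form, writing each companion block as a $4$-potent plus a nilpotent in the same spirit as Proposition~\ref{lambda-even-N-M2}. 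So the substance is (1)$\Rightarrow$(4).

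For (1)$\Rightarrow$(4), I apply Proposition~\ref{general-case-quasi-q-nil} with $q=4$. Since $n\geq 7>4$, part (1) gives $\Char F=p<\infty$. Let $p^k$ be the largest power of $p$ dividing $q-1=3$. If $p\neq 3$, then $p^k=1$ and $(q-1)+p^k=4<7$. If $p=3$, then $p^k=3$ and $(q-1)+p^k=6<7$. In both cases part (2) applies. Hence $F$ is finite and embeds in the splitting field of $x^4-x$ over $\mathbb{F}_p$. Moreover, the argument preceding that proposition shows that for such $n$ we have a genuine decomposition $A=f+N$ with $f^4=f$. That is, the common-eigenvector step forces $\lambda=1$, for every $A=\diag(I_{n-1},b)$.

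Next I would pin down $F$ using this decomposition. Over $\overline{F}$, the eigenvalues of $f$ are $0$ and the cube roots of unity. If $\dim V_\mu(f)\geq 2$ for some $\mu\neq 1$, this contradicts $\dim V_1(A)=n-1$, so each such $\mu$ occurs with multiplicity at most one. Thus $\tr A=(n-1)+b$ lies in the finite set $S$ of sums of $m\cdot 1$ (with $m\in\{n-3,\dots,n\}$) plus a subset of $\{0,\omega,\omega^2\}$, where the multiplicities of the $\omega$-values are also bounded. Jordan blocks need care when $p=3$: then $x^4-x=x(x-1)^3$, so $f$ may carry nontrivial unipotent blocks, but these contribute only integers to the trace. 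Letting $b$ range over $F$ gives $F\subseteq \mathbb{F}_p(\omega)$. For $p=3$ we have $\omega=1$, so $F=\mathbb{F}_3$. For $p=2$ we get $F\subseteq\mathbb{F}_4$. For $p\equiv 1 \pmod 3$ we get $F=\mathbb{F}_p$. For $p\equiv 2 \pmod 3$ with $p>2$ we get $F\subseteq \mathbb{F}_{p^2}$.

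The main obstacle is then excluding $p\geq 3$. I would mimic Claim~2 of Theorem~\ref{lambda-3-N}. Take $b$ so that $\tr A=n-1+b$ avoids every value $\tr f$ can attain, namely integers from $n-3$ to $n$ plus $\omega$-contributions. Then $n-1+b-m\in\{0,\omega,\omega^2,\omega+\omega^2=-1,\dots\}$, and this finite set is strictly smaller than $F$ whenever $|F|\geq 3$ and $p$ is odd. For instance, when $p=3$, choosing $b=-3\cdot 1_F$ fails, so I would instead use a block matrix such as $\diag(I_{n-2},-1,-1)$. For this matrix the $-1$-eigenspace has dimension two, and since $-1$ is not a $4$-potent when $p$ is odd, the nilpotent $N=A-f$ would be forced to have a nonzero eigenvalue on a common eigenvector, a contradiction. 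This leaves $F\in\{\mathbb{F}_2,\mathbb{F}_4\}$.
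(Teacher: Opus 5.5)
You follow the paper's route: Proposition~\ref{general-case-quasi-q-nil} gives finiteness and $\lambda=1$, the common-eigenvector step gives $\dim V_\mu(f)\le 1$ for $\mu\neq 1$, and you compare $\tr f$ with $\tr\diag(I_{n-1},b)$. The genuine error is in your exclusion of $p=3$. For $A=\diag(I_{n-2},-1,-1)$ you claim that $N=A-f$ must have a nonzero eigenvalue on a common eigenvector. That presupposes that the $2$-dimensional space $V_{-1}(A)$ meets some eigenspace of $f$, which is only forced when that eigenspace has dimension at least $n-1$. The conclusion is in fact false. In $M_3(\mathbb{F}_3)$, the matrix $B=\left(\begin{smallmatrix}2&0&0\\0&2&0\\1&2&1\end{smallmatrix}\right)$ is diagonalizable with eigenvalues $1,2,2$, hence similar to $\diag(1,-1,-1)$. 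Let $J$ be the nilpotent Jordan block of size $3$. Then $f=B-J=\left(\begin{smallmatrix}2&2&0\\0&2&2\\1&2&1\end{smallmatrix}\right)$ has characteristic polynomial $x^3+x^2+x=x(x-1)^2$ over $\mathbb{F}_3$. This divides $x(x-1)^3=x^4-x$, so $f^4=f$. Hence $\diag(I_{n-2},-1,-1)=I_{n-3}\oplus\diag(1,-1,-1)$ \emph{is} a $4$-potent plus a nilpotent in $M_n(\mathbb{F}_3)$, and it gives no contradiction.

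The repair lies inside your own framework; your bookkeeping $m\in\{n-3,\dots,n\}$ was too coarse, since modulo $3$ it already exhausts $\mathbb{F}_3$. In characteristic $3$ one has $x^4-x=x(x-1)^3$, so the only eigenvalue of $f$ other than $1$ is $0$. It occurs only in $1\times 1$ blocks and, by the eigenspace argument, at most once. Hence $\tr f\in\{n-1,n\}\cdot 1_F$, whereas $b=-1$ gives $\tr A=n-2$. This is exactly the bound $|F|\le 2\cdot 3^{0}=2$ of Proposition~\ref{general-case-quasi-q-nil}(2)(b), which the paper uses to dismiss $p=3$ at once.

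In fact $b=-1$ works for every odd $p$. Let $T$ be the set of eigenvalues of $f$ different from $1$: for $p=3$ this is $T\subseteq\{0\}$, and for $p\neq 3$ it is $T\subseteq\{0,\omega,\omega^2\}$ with $\omega$ a primitive cube root of unity. Then $\tr f=\tr A$ reads $\sum_{\mu\in T}(1-\mu)=2$. None of the possible sums $0,1,1-\omega,1-\omega^2,2-\omega,2-\omega^2,3,4$ equals $2$ in odd characteristic.

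Some explicit check of this kind is needed anyway. Your assertion that the attainable traces form a set smaller than $F$ is not automatic for $F=\mathbb{F}_7$, where there are eight subset sums. The paper instead uses $|F|\le 8$ from part (2)(a) to reduce to $\mathbb{F}_5$ and $\mathbb{F}_7$, and then lists the traces there; for $\mathbb{F}_7$ the eight sums collapse to six values.
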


\begin{proof}
$(1) \Rightarrow (4)$. Let $p = \Char F$ and $q = 4$. Since $n > (q-1) + 3$, we can apply Proposition~\ref{general-case-quasi-q-nil}~(2) to get the assertion.

\medskip

(a) If $\mathrm{gcd}(p, 3)$ = 1, then $|F| \leq 2^3 = 8$. Thus, $p \in \{2,5,7\}$. Let $f \in M_{n}(F)$ be a $q$-potent as in case (b) before Proposition \ref{general-case-quasi-q-nil}; thereby, for any eigenvalue $1 \not= \mu$ of $f$, we know that $\dim V_{\mu}(f) \leq 1$.

\smallskip

If $p = 2$, then a field $F$ is a subfield of a splitting field of $x^{4}-x$ over $\mathbb{F}_{2}$, i.e., $F \subseteq \mathbb{F}_{4}$.

\smallskip

If $p = 5$, then $F = \mathbb{F}_{5}$. So, the polynomial $x^4 - x \in \mathbb{F}_{5}[x]$ decomposes as $x(x-1)(x^2 + x + 1)$. Set $\alpha, \beta \in \overline{F}$ to be the roots of $x^2 + x + 1$. Since the polynomial $x^2 + x + 1 \in \mathbb{F}_{5}[x]$ is irreducible, $\tr f$ can only take one of the following values:

\begin{gather*}
((n-3) + (0 + \alpha + \beta)) \cdot 1_{\mathbb{F}_{5}},
  \\
((n-2) + (\alpha + \beta)) \cdot 1_{\mathbb{F}_{5}},
  \\
((n-1) + (0)) \cdot 1_{\mathbb{F}_{5}},
  \\
n \cdot 1_{\mathbb{F}_{5}},
\end{gather*}
Therefore, $\tr f$ can take only $3 < |\mathbb{F}_{5}|$ different values -- a contradiction.

\smallskip

If $p = 7$, then $F = \mathbb{F}_{7}$. There are already four $4$-potents in $\mathbb{F}_{7}$, namely $0, 1, 2, 4$. Hence, $\tr f$ can only take one of the following values:

\begin{gather*}
((n-3) + (0 + 2 + 4)) \cdot 1_{\mathbb{F}_{7}}, 
    \\
((n-2) + (0 + 2)) \cdot 1_{\mathbb{F}_{7}},\quad 
((n-2) + (0 + 4)) \cdot 1_{\mathbb{F}_{7}},\quad 
((n-2) + (2 + 4)) \cdot 1_{\mathbb{F}_{7}}, 
    \\
((n-1) + (0)) \cdot 1_{\mathbb{F}_{7}},\quad 
((n-1) + (2)) \cdot 1_{\mathbb{F}_{7}},\quad 
((n-1) + (4)) \cdot 1_{\mathbb{F}_{7}}, 
    \\
n \cdot 1_{\mathbb{F}_{7}}.  
\end{gather*}
Consequently, $\tr f$ can take only $6 < |\mathbb{F}_{7}|$ different values -- a contradiction.

\smallskip

(b) If $p \mid q-1$, then $p=3$ and $|F| \leq 2 \cdot 3^{-1 + 3/3} = 2$. Thus, there is obviously no such a field $F$.

\medskip

$(4) \Rightarrow (3)$ is precisely \cite[Theorem 2]{A17}. Also, implications $(3) \Rightarrow (2)$ and $(2) \Rightarrow (1)$ are quite trivial, so their verification is voluntarily dropped off.
\end{proof}

Similarly to the case of $\mathbb{F}_{5}$ in the Corollary~\ref{quasi-4}, one can specifically restate Proposition~\ref{general-case-quasi-q-nil}~(2b) as follows: if $\mathrm{gcd}(\Char F,q-1) = 1$ and $n > q$, then $|F| < 2^{t-1}$, where $t$ is the number or irreducible factors of $x^{q}-x \in F[x]$.

\section{\bf Weakly Strongly k-Nil-Clean Rings}

Let $k>1$ be a natural number. Generalizing the basic notion introduced in \cite{DM26}, we say that a ring $R$ is {\it weakly strongly $k$-nil-clean} if, for every element $r\in R$, there are $e_1,\ldots, e_k\in Id(R)$ and $n \in nil(R)$ that all commute such that $r = \pm e_1\pm \ldots \pm e_k + n$.

\medskip

The following technical claim is a necessary starting point of view.

\begin{lemma}\label{la1}
Let $k>1$ be a natural number and let $R$ be a weakly strongly $k$-nil-clean ring. If $p={\rm char}(R)$ is a prime number, then there exists a natural number $t$ such that $r^{p^{t}}=r^{p^{t+1}}$ for any $r \in R$.
\end{lemma}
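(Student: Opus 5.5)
The plan is to first obtain the identity $r^{p^t}=r^{p^{t+1}}$ for each single element $r$ from its decomposition, and then make the exponent uniform. Fix $r\in R$ and write $r=s+n$ with $s=\pm e_1\pm\cdots\pm e_k$, where $e_1,\dots,e_k\in Id(R)$, $n\in nil(R)$, and all of these commute. Since $\Char R=p$ and $e_1,\dots,e_k,n$ generate a commutative subring, the Frobenius map $x\mapsto x^p$ is additive there. Hence
\[
s^p=\sum(\pm1)^p e_i^p=\sum \pm e_i=s,
\]
because $(-1)^p=-1$ for odd $p$ and $-1=1$ when $p=2$. So $s^{p^j}=s$ for all $j$, and therefore
\[
r^{p^j}=s^{p^j}+n^{p^j}=s+n^{p^j}.
\]
If $n^{m}=0$ and $p^{t}\ge m$, then $r^{p^t}=s=r^{p^{t+1}}$. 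This settles the identity for one element at a time, with $t$ depending only on the nilpotency index of the nilpotent part of $r$.

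Everything therefore reduces to a single claim: there is an integer $m$ such that every nilpotent component $n$ occurring in some decomposition $r=s+n$ satisfies $n^m=0$. Then $t$ with $p^t\ge m$ works for every $r$. Note that for such a decomposition, $n=r-s=r-r^{p^{j}}+n^{p^j}$ for every $j$, so $n$ is a polynomial in $r$.

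The main obstacle is this uniform bound. My first attempt is by contradiction via a product argument. Suppose elements $r_i$ exist with $r_i^{p^i}\neq r_i^{p^{i+1}}$, and try to combine them into one element of $R$ (for example inside a corner, or as $\sum_i r_i$ after arranging orthogonality). Applying the defining property to that single element, then projecting back, would bound all the indices at once. If this cannot be arranged inside $R$, my fallback is to show directly that $nil(R)$ has bounded index. For this I would apply the weakly strongly $k$-nil-clean decomposition to the elements $1+a$ and $a$ with $a$ nilpotent. Comparing the two decompositions, the idempotent parts should be forced, after Frobenius, to satisfy $s^p=s$ with $s$ both nilpotent-perturbed and $p$-potent. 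I would try to extract from this that $a^{p^{c}}=0$ for a constant $c$ depending only on $k$ and $p$. I expect this last step to be where the real work lies, and I am not certain it goes through as stated.
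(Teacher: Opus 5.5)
Your first paragraph is correct and is exactly the paper's proof. Write $r=\varepsilon_1+\dots+\varepsilon_k+j$ with everything commuting, expand $r^{p^t}=\sum_i\varepsilon_i^{p^t}+j^{p^t}$ using commutativity and characteristic $p$, note $\varepsilon_i^{p^t}=\varepsilon_i$, and choose $t$ with $j^{p^t}=0$. This gives $r^{p^t}=\varepsilon_1+\dots+\varepsilon_k=r^{p^{t+1}}$. In the paper, too, $t$ comes from the nilpotency index of $j$ and therefore depends on $r$. That per-element version is all the paper proves, and all it uses in Theorem~\ref{tha2}, where the lemma is applied to one element at a time ($\bar r$, $a$, or the matrix $A$).

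The genuine problem is the second half of your plan. The uniform bound you reduce to is false, so neither the packaging argument nor the fallback $a^{p^c}=0$ with $c=c(k,p)$ can work. First, a uniform $t$ would bound the index of every nilpotent: if $a\in nil(R)$ and $a^{p^t}=a^{p^{t+1}}$, then $a^{p^t}(1-a^{p^{t+1}-p^t})=0$, and the second factor is a unit, so $a^{p^t}=0$.

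Now let $R=\mathbb{F}_2[x_2,x_3,\dots]/(x_n^n : n\ge 2)$.
\begin{itemize}
\item $R$ is commutative local of characteristic $2$. Its maximal ideal $(x_2,x_3,\dots)$ is nil, being generated by nilpotents, and its residue field is $\mathbb{F}_2$.
\item Hence every element is $e+j$ with $e\in\{0,1\}$ and $j$ nilpotent. Padding with $e_2=\dots=e_k=0$ shows $R$ is weakly strongly $k$-nil-clean for every $k>1$.
\item Yet $x_n^{n-1}\neq 0$, since it lies outside the monomial ideal. So $x_n$ has index exactly $n$, and no single $t$ serves all $r$.
\end{itemize}

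Your idea of combining the bad elements into one fails because no element of $R$ packages all the $x_n$. In $\prod_n\mathbb{F}_2[x]/(x^n)$ the element $(x,x,\dots)$ does package them, but it has no admissible decomposition. Indeed, the idempotent part is componentwise $0$ or $1$ and must vanish in each component, leaving $(x,x,\dots)$ as the "nilpotent" part, which is not nilpotent. So that ring is not weakly strongly $k$-nil-clean.

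Thus the literal $\exists t\,\forall r$ reading of the statement is false. The right move is to stop after your first paragraph and state the conclusion with $t=t(r)$.
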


\begin{proof} Choose $r\in R$. According to the stated conditions, the equality $$r= \varepsilon_1+ \ldots +\varepsilon_k + j$$ holds, where $\varepsilon_i\in \{\pm e_i\}$ for every $1\leq i\leq k$, $e_1,\ldots, e_k\in Id(R)$, $j \in nil(R)$, and the elements $e_1,\ldots, e_k, j$ commute with each other. Moreover, for some natural number $t$, the equalities $j^{p^{t}}=j^{p^{t+1}}=0$ also hold. Since $\varepsilon_i^{p^{t}}=\varepsilon_i^{p^{t+1}}$, one can write that $$r^{p^{t}}=e_1^{p^{t}}+\ldots+e_k^{p^{t}}=e_1^{p^{t+1}}+\ldots+e_k^{p^{t+1}}=r^{p^{t+1}},$$ as needed.
\end{proof}

As it is well-known, a ring $R$ is called {\it strongly $\pi$-regular} if, for every element $r\in R$, there is an element $s\in R$ such that $r^n=sr^{n+1}$ for some $n\in\N$.

\medskip

Our main achievement is this section is the following structural result.

\begin{theorem}\label{tha2} Let $k>1$ be a fixed natural number. If $R$ is a weakly strongly $k$-nil-clean ring, then $R$ is a strongly $\pi$-regular ring which is isomorphic to a finite direct product of rings $S$, each of which satisfies the following conditions:
\begin{enumerate}
\item[(a)] $J(S)$ is a nil-ideal;
\item[(b)] Every right primitive ideal $I$ of $S$ is maximal, and the quotient ring $S/I\cong\F_p$, where $p\leq 2k+1$;
\item[(c)] For some prime $p$, the quotient ring $S/J(S)$ is a subdirect product of the field $\F_p$ and $p\leq 2k+1$;
\item[(d)] If for some natural number $m$ the ring $(\F_p)^m$ is not a weakly strongly $k$-nil-clean ring, then $S/J(S)\cong (\F_p)^k$, where $k<m$.
\end{enumerate}
\end{theorem}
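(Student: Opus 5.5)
Fix $r\in R$ and write $r=\varepsilon_1+\dots+\varepsilon_k+j$ with commuting $\varepsilon_i\in\{\pm e_i\}$ and $j$ nilpotent. The subring generated by the $e_i$ is a Boolean-like commutative ring spanned by the products $e_{i_1}\cdots e_{i_s}$. Splitting along the $2^k$ orthogonal idempotents $\prod_i f_i$ with $f_i\in\{e_i,1-e_i\}$, the element $x=\varepsilon_1+\dots+\varepsilon_k$ acts on each piece as an integer $c$ with $|c|\le k$. Hence $x$ is a root of $g(t)=\prod_{c=-k}^{k}(t-c)$, and therefore $g(r)=g(x+j)$ is nilpotent, since $j$ commutes with $x$.

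\textbf{Step 1: finite characteristic and the direct product decomposition.} Applying this to $r=1$ only gives information about $1$ itself, so I instead apply it to $r=2k+1$ (or simply observe that $g(t)$ has integer coefficients). Then $g(r)$ is nilpotent for every $r\in R$. In particular, $N:=(2k+1)!\cdot\big((2k+1)!-\text{stuff}\big)$ — more simply, $g(2k+1)=(2k+1)!$ is a central nilpotent integer, so $m\cdot 1$ is nilpotent for $m=(2k+1)!$. It follows that $\Char R$ divides some power $m^s$, and by the Chinese remainder theorem $R\cong\prod_{p\le 2k+1}R_p$, where each $R_p$ has characteristic a power of $p$. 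Each factor inherits the weakly strongly $k$-nil-clean property, since idempotents and nilpotents project. This yields the finite product of rings $S$, each with $\Char S=p^a$ and $p\le 2k+1$.

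\textbf{Step 2: strong $\pi$-regularity and the structure of $S/J(S)$.} In $S$, the element $p$ is central nilpotent, so $pS\subseteq J(S)$ and $\bar S=S/pS$ has prime characteristic. By Lemma~\ref{la1}, $\bar S$ satisfies $r^{p^t}=r^{p^{t+1}}$, and this identity lifts modulo the nil ideal $pS$. The identity $r^{p^t}=r^{p^{t+1}}$ says $r^{p^t}=r\cdot r^{p^t}\cdot r^{p^{t+1}-p^t-1}$-type relations, so every element is strongly $\pi$-regular. Hence $S$ is strongly $\pi$-regular, and $J(S)$ is nil, giving (a).

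For (b), take a right primitive ring $S/I$. It satisfies the polynomial identity $x^{p^t}=x^{p^{t+1}}$ and is therefore reduced modulo its nilpotents. By Kaplansky's theorem on primitive PI rings, $S/I$ is a matrix ring over a division ring, finite-dimensional over its center. The identity forces it to have no nonzero nilpotents, hence it is a division ring in which every element satisfies $x^{p^{t+1}}=x^{p^t}$, so $x^{p^t(p^{t}(p-1))}=1$ for $x\ne0$. Jacobson's commutativity theorem then makes it a field, namely a finite field $\F_{p^d}$. Since $g(x)$ is nilpotent, hence zero, in a field, every element of $S/I$ lies in the image of $\{-k,\dots,k\}$, so the field is exactly $\F_p$. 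Primitive ideals are then maximal because the quotient is a field, and (c) follows because $J(S)$ is the intersection of the primitive ideals. As in Step~1, only one prime occurs per factor $S$.

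\textbf{Step 3: condition (d), which I expect to be the main obstacle.} A subdirect product of copies of $\F_p$ satisfying $x^p=x$ is a $p$-Boolean ring. I would show that $\bar S=S/J(S)$ is weakly strongly $k$-nil-clean, which is a homomorphic image argument, and that, being reduced, each of its elements is a signed sum of $k$ commuting idempotents. If $\bar S$ had $m$ orthogonal nonzero idempotents, then $(\F_p)^m$ would be a corner (a direct factor, for finite sets of idempotents) that is weakly strongly $k$-nil-clean. So if $(\F_p)^m$ is not, the Boolean algebra of idempotents of $\bar S$ is finite with fewer than $m$ atoms, and $\bar S\cong(\F_p)^{l}$ with $l<m$. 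The delicate point is the lifting and the finiteness argument for infinite subdirect products: there I would pick an element of $\bar S$ taking $m$ distinct "patterns" on $m$ orthogonal idempotents and project onto the corresponding finite factor $(\F_p)^m$.
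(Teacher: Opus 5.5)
The gap is in Step~2, where you go from Kaplansky's conclusion $S/I\cong M_n(D)$ to ``$S/I$ is a division ring.'' Your justification is that the identity $x^{p^t}=x^{p^{t+1}}$ forces $S/I$ to have no nonzero nilpotents. That is false. Every nilpotent of index at most $p^t$ satisfies the identity, since both sides vanish: for instance $E_{12}\in M_2(\F_p)$ when $t\ge 1$, and $\F_p[\epsilon]/(\epsilon^2)$ satisfies $x^p=x^{p^2}$. So nothing you wrote excludes $n\ge 2$, and that exclusion is the real content of (b).

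What does exclude it is an obstruction among \emph{units}. For $n\ge 2$, $M_n(D)\supseteq M_n(\F_p)$ contains a unit $\diag(A,I_{n-2})$ of multiplicative order $p+1$, where $A$ comes from $\F_{p^2}^{*}\subseteq M_2(\F_p)$. The identity gives $A^{p^t(p-1)}=1$, and since $\gcd(p+1,p^t(p-1))\le 2<p+1$, this is impossible. This is the paper's argument; it reaches $M_2(\F_p)$ via the density theorem rather than Kaplansky, which makes no difference.

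You could also close the gap with your own polynomial $g$. If $A\in M_2(\F_p)$ has irreducible characteristic polynomial, then every $A-c$ with $c\in\Z$ is invertible. Hence $g(\diag(A,I_{n-2}))=\diag(g(A),0)$ is not nilpotent, contradicting the nilpotency of $g(r)$ for all $r$.

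Once $S/I$ is known to be a division ring, you do not need Jacobson's theorem. Its only idempotents are $0,1$ and its only nilpotent is $0$, so every element is an integer in $[-k,k]$, which gives $S/I\cong\F_p$ with $p\le 2k+1$.

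There are also three smaller corrections.
\begin{enumerate}
\item In Step~1, $g(2k+1)=(k+1)(k+2)\cdots(3k+1)=(3k+1)!/k!$, not $(2k+1)!$, so your computation only bounds the primes by $3k+1$. Evaluate at $r=k+1$ instead, where $g(k+1)=(2k+1)!$; this is exactly the paper's use of $(k+1)\cdot 1_R$. The bound $p\le 2k+1$ is in any case recovered from (b).
\item ``The identity lifts modulo $pS$'' should be made precise as in the paper. From $r^{n_1}-r^{n_2}\in pS$ and $(pS)^a=0$ you get $(r^{n_1}-r^{n_2})^a=0$, which expands to $r^{n_2a}=r^{n_2a+1}f(r)$ with $f\in\Z[x]$.
\item In Step~3, $(\F_p)^m$ is a homomorphic image of $S/J(S)$, not a corner. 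The ``delicate point'' about infinite subdirect products disappears if you argue as the paper does. Each primitive ideal has quotient $\F_p$, hence is maximal, and $m$ distinct ones would give, by the Chinese Remainder Theorem, a surjection $S\to(\F_p)^m$. So there are fewer than $m$ of them, and $S/J(S)\cong(\F_p)^l$ with $l<m$.
\end{enumerate}
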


\begin{proof}
We intend to show that there will exist smallest natural number $N$ for which the equality $N\cdot 1_R=0$ is true. To that purpose, for some commuting idempotents $e_1, \ldots, e_{k}$ and a nilpotent $j$ from the ring $R$, the equality $$(k+1)\cdot 1_R=\varepsilon_1+\ldots+\varepsilon_k+j$$ is fulfilled, where $\varepsilon_i\in \{\pm e_i\}$ for each $1\leq i\leq k$. Furthermore, for an arbitrary subset $A$ of the set $I=\{1,\ldots, k\},$ we put $$e_A:=\prod_{i\in A}e_i \prod_{i\in I \setminus A}(1-e_i).$$ Since
$$
1_R=\prod_{i\in I}(e_i+(1-e_i))=\sum_{A\in 2^I}e_A,
$$
one inspects that
$$
(k+1)\cdot 1_R=(\varepsilon_1+\ldots+\varepsilon_k+j)\sum_{A\in 2^I}e_A=\sum_{A\in 2^I}N_A e_A+j,
$$
where $N_A\in \Z$ and $\mid N_A\mid\leq k$ for every $A\in 2^I.$ Then,
$$
j=\sum_{A\in 2^I}(k+1-N_A ) e_A.
$$
Since $(e_A)_{A\in 2^I}$ is an orthogonal system of idempotents, for some natural number $n$, the following equalities are valid:
$$
0=j^n=\sum_{A\in 2^I}(k+1-N_A )^ne_A.
$$
Therefore, $(k+1-N_A )^ne_A=0$ for any $A\in 2^I$ and so
$$[1,\ldots, 2k+1]^n\cdot 1_R=\sum_{A\in 2^I}[1,\ldots, 2k+1]^ne_A=0.$$ Thus, indeed, there exists smallest natural number $N$ such that the equality $N\cdot 1_R=0$ is realizable, as claimed.

Writing now $N=p_1^{\alpha_1}\ldots p_m^{\alpha_m}$, where $p_1,\ldots, p_m$ are pairwise distinct primes, the classical Chinese Remainder Theorem reaches the existence of a ring isomorphism $$R\cong R_1\times \ldots\times R_m,$$ where $p_i^{\alpha_i}\cdot 1_{R_i}=0$ for every $1\leq i\leq m$. Next, for an arbitrary $1\leq i\leq m$, we manage to prove that each of the rings $R_i$ is strongly $\pi$-regular. To substantiate this, choose $r\in R_i$. So, Lemma \ref{la1} teaches us that, for some natural numbers $n_1, n_2$ with $n_1>n_2$, the equality $\overline{r}^{n_1}=\overline{r}^{n_2}$ holds, where $\overline{r}:=r+p_iR_i\in R_i/p_iR_i$. That is why, the equality $(r^{n_1}-r^{n_2})^{\alpha_i}=0$ holds as well and, consequently, for some polynomial $f(x)\in \Z[x]$, we deduce $$r^{n_2 \alpha_i}=r^{n_2 \alpha_i+1}f(r).$$ Now, since a finite direct product of strongly $\pi$-regular rings remains a strongly $\pi$-regular ring, it must be that $R$ is a strongly $\pi$-regular ring too. Also, since the Jacobson radical of an arbitrary $\pi$-regular ring is known to be a nil-ideal, $J(R)$ is too a nil-ideal.

Furthermore, we fix an arbitrary index $i\in \{1,\ldots, m\}$. We show that every right primitive factor of the ring $R_i$ is isomorphic to the field $\F_{p_i}$. In fact, let $I$ be a right primitive ideal of the ring $R_{i}$, and suppose $a$ is an arbitrary element of the ring $R_i/I$. Then, for some commuting idempotents $f_1, \ldots, f_{k}\in R_i/I$, and a nilpotent $j_0\in R_i/I$, the equality $$a=f_1'+\ldots+f_k'+j_0,$$ where $f_i'\in \{\pm f_i\}$, will hold for each $1\leq i\leq k$. But, since the ring $R_i/I$ has characteristic $p_i$, the equality $a^{p_i^n}=a^{p_i^{n+1}}$ is obviously satisfied for some natural number $n$, as required.

Suppose now that $R_i/I$ is {\it not} a division ring. Since the field $\mathbb{F}_{p^2_i}$ can be embedded in the matrix ring $M_2(\mathbb{F}_{p_i})$, there is an invertible matrix $A$ from $M_2(\mathbb{F}_{p_i})$ whose order is exactly $p_i+1$. On the other hand, from the reasoning given above and looking at \cite[11.19]{Lam01}, it follows for some $n\in \mathbb{M}$ that the equality $A^{p_i^{n+1}-p_i^n}=E$ is fulfilled, where $E$ is the standard identity matrix. If, for a moment, $p_i$ is an odd prime, then $(p_i+1, p_i^{n+1}-p_i^n)=2$ and, thereby, $A^2=E$, which contradicts the choice of matrix $A$. If, however, $p_i=2$, then $(p_i+1, p_i^{n+1}-p_i^n)=1$, which is manifestly also impossible. So, $R_i/I$ is a division ring, as suspected. Since $R_i/I$ is a weakly strongly $k$-nil-clean ring and $0,1$ are the only idempotents of $R_i/I$, one concludes that $$R_i/I\cong \F_p$$ and $p\leq 2k+1$, as formulated above.

Suppose next that, for some natural number $m$, the ring $(\F_p)^m$ is {\it not} a weakly strongly $k$-nil-clean ring. Since, same as above, every right primitive ideal is maximal, it again follows from the Chinese Remainder Theorem that the set of all right primitive ideals of the ring $R$ consists of at most $m-1$ elements. In particular, we deduce $$R_i/J(R_i)\cong (\F_p)^k,$$ where $k<m$, as asserted.
\end{proof}

A logical question of important interest, in order to obtain an eventual necessary and sufficient condition, is whether or not the statement in the last theorem can be reversed? At this stage, we are unable to conjecture the way of a probable validity.

\medskip

Further, as a series of subsequent corollaries, we extract the following ones.

\begin{corollary} If $R$ is a weakly strongly $k$-nil-clean ring for some $k\in \N$, then $J(R)=nil(R)$.
\end{corollary}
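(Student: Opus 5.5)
We prove the two inclusions $J(R)\subseteq nil(R)$ and $nil(R)\subseteq J(R)$ separately, using Theorem~\ref{tha2} throughout.

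For $J(R)\subseteq nil(R)$, Theorem~\ref{tha2} says $R$ is strongly $\pi$-regular, and in a strongly $\pi$-regular ring the Jacobson radical is nil. This is exactly what was noted at the end of the proof of Theorem~\ref{tha2}, so this inclusion is immediate.

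For $nil(R)\subseteq J(R)$, the key structural fact is that $R/J(R)$ is reduced. By Theorem~\ref{tha2}, $R\cong S_1\times\cdots\times S_m$, where each $S_i/J(S_i)$ is a subdirect product of copies of $\F_{p_i}$ (condition (c)). A subdirect product of fields embeds in a direct product of fields, so it is reduced. Moreover $J(R)=\prod_i J(S_i)$ and $R/J(R)\cong\prod_i S_i/J(S_i)$, and a finite product of reduced rings is reduced. Hence $R/J(R)$ is reduced.

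Now take $a\in nil(R)$. Its image $\bar a$ in $R/J(R)$ is nilpotent, hence $\bar a=0$ by reducedness, so $a\in J(R)$. Alternatively, one can use condition (b) directly: every right primitive factor of each $S_i$ is the field $\F_p$, so $a$ maps to $0$ in every right primitive factor. Since $J$ is the intersection of the right primitive ideals, $a\in J(R)$.

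No step is a genuine obstacle. The only point needing care is that the primitive ideals, and hence the Jacobson radical, of the finite direct product decompose componentwise, which is standard. Combining the two inclusions gives $J(R)=nil(R)$, and in particular $nil(R)$ is an ideal.
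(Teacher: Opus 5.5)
Your proof is correct and follows essentially the paper's intended route: the paper states this corollary without proof as an immediate consequence of Theorem~\ref{tha2}, with $J(R)$ nil by strong $\pi$-regularity (as remarked inside that proof), and $nil(R)\subseteq J(R)$ because $R/J(R)$ is a finite product of subdirect products of prime fields and hence reduced. One small detail should be added: Theorem~\ref{tha2} assumes $k>1$. Since $0$ is an idempotent commuting with everything, a weakly strongly $k$-nil-clean ring is also weakly strongly $(k+1)$-nil-clean, so the case $k=1$ reduces to $k=2$.
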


The next consequence also appeared in \cite{BDZ} in a slightly more weak form concerning only {\it abelian} rings.

\begin{corollary}\label{ca2} \cite[Theorem 1]{KZ16}
The following conditions are equivalent for a ring $R$:
\begin{enumerate}

\item[(1)] $R$ is strongly weakly nil-clean;
\item[(2)] $R$ is isomorphic to either $R_1$, $R_2$, or $R_1\times R_2$, where
\begin{enumerate}
\item[(a)] $R_1$ is strongly nil-clean, that is, $J(R_1)$ is nil and $R_1/J(R_1)$ is Boolean;
\item[(b)] $R_2/J(R_2)\cong \F_3$ and $J(R_2)$ is nil.
\end{enumerate}

\end{enumerate}
\end{corollary}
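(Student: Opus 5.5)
The plan is to run the proof of Theorem~\ref{tha2} with $k=1$ and then sharpen its conclusions. A strongly weakly nil-clean ring is exactly a weakly strongly $1$-nil-clean ring. Nothing in the proof of Theorem~\ref{tha2} or of Lemma~\ref{la1} uses $k>1$, so I will check this briefly and reuse it.

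For $(1)\Rightarrow(2)$ I follow that proof step by step. First, writing $2\cdot 1_R=\pm e+j$ and decomposing along $e$ and $1-e$ gives $N\cdot 1_R=0$ for some $N$ whose prime divisors are $\le 2k+1=3$. The Chinese Remainder Theorem then splits $R\cong R_1\times R_2$ with $2^a\cdot 1_{R_1}=0$ and $3^b\cdot 1_{R_2}=0$, either factor possibly zero. Each factor is a homomorphic image of $R$, so it is again strongly weakly nil-clean. As in Theorem~\ref{tha2}, each $R_i$ is strongly $\pi$-regular, so $J(R_i)$ is nil, and every right primitive factor of $R_i$ is $\F_2$, respectively $\F_3$. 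Hence $R_1/J(R_1)$ is a subdirect product of copies of $\F_2$. It is therefore reduced and satisfies $x^2=x$, i.e.\ it is Boolean, which gives (a) by the known characterization of strongly nil-clean rings.

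For $R_2$, I show that $\F_3\times\F_3$ is not strongly weakly nil-clean. It is reduced, so its only nilpotent is $0$. The elements $\pm e$ have all coordinates in $\{0,1\}$ or all in $\{0,-1\}$, so $(1,-1)$ is not of the form $\pm e+n$. By part (d) of Theorem~\ref{tha2} (with $m=2$), $R_2$ has at most one right primitive ideal. Thus $J(R_2)$ is that maximal ideal and $R_2/J(R_2)\cong\F_3$ whenever $R_2\neq0$, which is (b).

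For $(2)\Rightarrow(1)$ the main obstacle is sign alignment: in $R_1\times R_2$ the same sign must be used in both coordinates.
\begin{itemize}
\item In $R_2$, every element is congruent to $0$, $1$ or $-1$ modulo the nil ideal $J(R_2)$. Hence it equals $0+n$, $1+n$ or $-1+n$, and these pieces commute trivially. For the residue $0$ both signs work.
\item In $R_1$, every element $r$ is $e+n$ with $e$ and $n$ commuting. Applying this to $-r$ gives $-r=f+m$, so $r=-f+(-m)$. Thus every element of $R_1$ admits both a plus and a minus representation.
\end{itemize}
So for $(r_1,r_2)$ I first fix the sign demanded by $r_2$, then choose the representation of $r_1$ with that same sign. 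Componentwise commutation then gives a commuting decomposition in $R_1\times R_2$, which proves (1).
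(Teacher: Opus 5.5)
Your proof is correct. For $(1)\Rightarrow(2)$ you follow the paper. You apply Theorem~\ref{tha2} with $k=1$ and use the witness $(1,-1)\in\F_3\times\F_3$. You rightly observe that the proof of Theorem~\ref{tha2} never uses $k>1$, which the paper assumes tacitly. You also spell out the Chinese Remainder split and why $R_1/J(R_1)$ is Boolean.

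For $(2)\Rightarrow(1)$ your route differs from the paper's.

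\emph{The paper} works globally modulo $J(R)$:
\begin{itemize}
\item It writes $R/J(R)\cong B\times\F_3$ with $B$ Boolean.
\item It uses that $\Z[\bar a]$ is finite and sits inside a copy of $\F_2^n\times\F_3$, where $\bar a=\pm\bar e$.
\item It lifts $\bar e$ to an idempotent modulo the nil ideal $J(R)\cap\Z[a]$ of the commutative ring $\Z[a]$, so the idempotent and nilpotent commute automatically.
\end{itemize}

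\emph{You} decompose coordinatewise:
\begin{itemize}
\item In $R_2$ the idempotent part is the central element $0$ or $\pm1$.
\item In $R_1$ you obtain both a $+$ and a $-$ representation by applying strong nil-cleanness to $-r$.
\item You then align the signs, exactly as in the proof of Lemma~\ref{direct}.
\end{itemize}

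Your argument is shorter and avoids both lifting idempotents and the finiteness of $\Z[\bar a]$. It does use hypothesis (a) in its \emph{strongly nil-clean} form, which is legitimate since the statement identifies this with the radical description. The paper's argument uses only the radical description ($J(R)$ nil, $R/J(R)$ known). It is also the template the authors reuse for the weakly strongly $2$- and $3$-nil-clean corollaries. There, matching the sign patterns of several idempotents factor by factor would need more case checking.
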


\begin{proof}
$(1) \Rightarrow (2)$. Since the element $(1, -1)\in \F_3\times \F_3$ manifestly does not have the form $\pm e$, where $e$ is an idempotent in the ring $\F_3\times \F_3$, the ring $\F_3\times \F_3$ is not strongly weakly nil-clean. The desired implication then follows directly from Theorem \ref{tha2}.

$(2) \Rightarrow (1)$. Write $R=R_1\times R_2$, where the rings $R_1$ and $R_2$ satisfy conditions (a) and (b) of point (2), respectively. We now menage to establish that $R$ is a strongly weakly nil-clean ring. To that target, assume $a\in R$ and set $\overline{a}:=a+J(R)$.

Knowing that $R/J(R)$ is isomorphic to the direct product of the field $\F_3$ and a ring which is a subdirect product of the field $\F_2$, it thus follows from the proof of Theorem \ref{tha2} that the ring $\Z[\overline{a}]$ is finite and, therefore, is contained in a subring $S$ of the ring $R/J(R)$ which is isomorphic to the ring $S=\F_2^{n}\times \F_3$, where $n\in \N$. Furthermore, a direct inspection verifies that $S$ is a strongly weakly nil-clean ring. Moreover, since $J(R)\cap \Z[\overline{a}]$ is a nilpotent ideal of the ring $\Z[\overline{a}]$, the idempotents of the ring $\Z[\overline{a}]$ lift modulo the ideal $J(R)\cap \Z[\overline{a}]$ and, thus, $a$ is the sum or the difference of a
nilpotent and an idempotent that commute with each other, as required.
\end{proof}

\begin{corollary} \cite[Theorem 2.26]{DM26}
The following conditions are equivalent for a ring $R$:
\begin{enumerate}

\item[(1)] $R$ is weakly strongly 2-nil-clean;
\item[(2)] $R$ is isomorphic to either $R_1$, $R_2$, or $R_1\times R_2$, where
\begin{enumerate}
\item[(i)] $R_1$ is strongly 2-nil-clean, that is, $J(R_1)$ is nil and $R_1/J(R_1)$ is 3-potent ring;
\item[(ii)] $R_2/J(R_2)\cong \F_5$ and $J(R_2)$ is nil.
\end{enumerate}

\end{enumerate}
\end{corollary}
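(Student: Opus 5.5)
We prove the two implications separately, following the template of the proof of Corollary~\ref{ca2}.

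\emph{$(1)\Rightarrow(2)$.} Suppose $R$ is weakly strongly $2$-nil-clean. By Theorem~\ref{tha2} with $k=2$, $R$ is a finite direct product of rings $S$. Each factor has $J(S)$ nil, and $S/J(S)$ is a subdirect product of copies of $\F_p$ with $p\le 5$, so $p\in\{2,3,5\}$.

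The factors with $p\in\{2,3\}$ give quotients $S/J(S)$ that are subdirect products of $\F_2$ or $\F_3$. Such quotients satisfy $x^3=x$, so they are tripotent rings. Since $J(S)$ is nil, these factors are strongly $2$-nil-clean, and their product is $R_1$.

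For $p=5$ we apply part (d) of Theorem~\ref{tha2} with $m=2$, which requires showing that $\F_5\times\F_5$ is not weakly strongly $2$-nil-clean. This ring has no nonzero nilpotents and its idempotents are $(0,0),(1,0),(0,1),(1,1)$. The sums $\pm e_1\pm e_2$ therefore have each coordinate in $\{0,\pm1,\pm2\}$, with the same sign pattern governing both coordinates. We check that the element $(2,-2)$ is not of this form: to get $2$ in the first coordinate we need $+e_1+e_2$ with both $e_i$ having first coordinate $1$. The second coordinate is then $(e_1)_2+(e_2)_2\in\{0,1,2\}$, and $-2\equiv 3 \pmod 5$ is not in this set. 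Also $2\equiv -3$ is not attainable another way, since sums lie in $\{-2,\dots,2\}$. Hence by (d), a factor with $p=5$ has $S/J(S)\cong\F_5$.

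It remains to see that at most one such factor occurs. If two occurred, $R$ would map onto $\F_5\times\F_5$, and the weakly strongly $2$-nil-clean property passes to homomorphic images, contradicting the previous paragraph. So that factor is $R_2$.

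\emph{$(2)\Rightarrow(1)$.} Take $a\in R$ and pass to $\bar a\in R/J(R)$. Here $R/J(R)$ is a tripotent ring times possibly $\F_5$. As in the proof of Corollary~\ref{ca2}, $\Z[\bar a]$ is finite and sits inside a subring isomorphic to $\F_2^{n}\times\F_3^{s}\times\F_5$, where the factors $\F_2^{n}\times\F_3^{s}$ come from the tripotent part.

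In $\F_2$ and $\F_3$ every element is $\pm e_1\pm e_2$ with the pattern chosen freely; for example $2=1+1$ and $-1=-1-0$. The main point is that one common sign pattern must serve all coordinates at once. We use the pattern $e_1-e_2$, which yields $0,1,-1$, and these cover $\F_3$. In $\F_5$ the same pattern $e_1-e_2$ only yields $\{0,1,-1\}$, so we instead take the pattern $+e_1+e_2$ and check the tripotent coordinates under it.
- In the $\F_5$ coordinate, $+e_1+e_2$ yields $\{0,1,2\}$.
- In the tripotent coordinates, $+e_1+e_2$ yields $\{0,1,2\}$, which covers $\F_3$ and $\F_2$.

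If $\bar a$ has $\F_5$-component in $\{0,1,2\}$, we use $+e_1+e_2$ everywhere. If the $\F_5$-component is in $\{3,4\}=\{-2,-1\}$, we use $-e_1-e_2$ everywhere, which yields $\{0,-1,-2\}$ in every coordinate and is fine in $\F_2$ and $\F_3$.

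So $\bar a$ has a commuting decomposition with a uniform sign pattern. Finally we lift. The idempotents lift modulo the nilpotent ideal $J(R)\cap\Z[a]$ inside the commutative ring $\Z[a]$, so the lifted idempotents commute with $a$ and with each other. The difference between $a$ and $\pm e_1\pm e_2$ then lies in $J(R)\cap\Z[a]$, which is nilpotent, and it commutes with the $e_i$. This gives the required decomposition of $a$.

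I expect the main obstacle to be this uniform-sign bookkeeping, together with verifying that $\F_5^2$ fails the property.
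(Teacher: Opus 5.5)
Your proposal is essentially the paper's proof. For $(1)\Rightarrow(2)$ it combines Theorem~\ref{tha2} with an element of $\F_5\times\F_5$ not of the form $\pm e_1\pm e_2$; you use $(2,-2)$ where the paper uses $(1,3)$, and both work. For $(2)\Rightarrow(1)$ it runs the lifting template of Corollary~\ref{ca2} on $\F_2^n\times\F_3^s\times\F_5$, and your uniform-sign bookkeeping supplies the check the paper calls straightforward.

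One point to tighten, which the paper also glosses over: lifting inside $\Z[a]$ needs the idempotents to lie in $\Z[\bar a]$, not merely in the ambient subring. This holds because $\Z[\bar a]$ is itself a finite reduced ring isomorphic to a product of copies of $\F_2$, $\F_3$ and at most one $\F_5$, so your sign argument can be run inside $\Z[\bar a]$.
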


\begin{proof}
$(1) \Rightarrow (2)$. Since the element $(1, 3)\in \F_5\times \F_5$ manifestly does not have the form $\pm e\pm f$, where $e, f$ are idempotents in the ring $\F_5\times \F_5$, then the ring $\F_5\times \F_5$ is not weakly strongly 2-nil-clean. Then, the implication follows immediately from Theorem \ref{tha2}.

$(2) \Rightarrow (1)$. Let the rings $R_1$ and $R_2$ satisfy conditions (i) and (ii) of point (2), respectively. It is straightforward to verify that the ring $(\F_2)^{n}\times (\F_3)^{m}\times \F_5$ is a weakly strongly 2-nil-clean ring for arbitrary $n,m\in \N$. Then, using arguments similar to those given in proving implication $(2) \Rightarrow (1)$ of Corollary \ref{ca2}, we can straightforwardly demonstrate that the direct product $R_1\times R_2$ is a weakly strongly 2-nil-clean ring, as asked for.
\end{proof}

Our final consequence is the following one.

\begin{corollary} The next conditions are equivalent for a ring $R$:
\begin{enumerate}

\item[(1)] $R$ is weakly strongly 3-nil-clean;
\item[(2)] $R$ is isomorphic to either $R_1$, $R_2$, $R_3$, or $R_1\times R_2$, or $R_1\times R_3$, where
\begin{enumerate}
\item[(i)] $R_1$ is strongly 2-nil-clean;
\item[(ii)] $R_2/J(R_2)\cong (\F_5)^m$, where $m\leq 3$ and $J(R_2)$ is nil;
\item[(iii)] $R_3/J(R_3)\cong \F_7$ and $J(R_3)$ is nil.
\end{enumerate}

\end{enumerate}
\end{corollary}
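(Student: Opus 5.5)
The plan is to derive necessity from Theorem~\ref{tha2}, after ruling out a few specific finite products of prime fields, and to obtain sufficiency by the lifting argument already used for Corollary~\ref{ca2}. Throughout, the key observation is this. In a product of prime fields, an element $\pm e_1\pm e_2\pm e_3$ uses one sign pattern $(s_1,s_2,s_3)\in\{\pm1\}^3$ for all coordinates. In each coordinate its value is then a subset sum of $\{s_1,s_2,s_3\}$, and different coordinates may use different subsets.

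Up to a global sign there are four patterns. Their subset-sum sets are the integer intervals
$$\{0,1,2,3\},\quad \{-1,0,1,2\},\quad \{-2,-1,0,1\},\quad \{-3,-2,-1,0\}.$$
An element of $\prod_i \F_{p_i}$ is therefore weakly strongly $3$-nil-clean if and only if some single one of these four intervals, reduced modulo the relevant primes, contains every coordinate. Nilpotents play no role here, since such a product is reduced.

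\textbf{Necessity.} By Theorem~\ref{tha2}, $R$ is a finite product of rings $S$ with $J(S)$ nil and $S/J(S)$ a subdirect product of copies of $\F_p$, where $p\in\{2,3,5,7\}$. Since $R/J(R)$ is again weakly strongly $3$-nil-clean, I would test the following rings with the interval criterion.
\begin{enumerate}
\item $\F_5^4$ fails on $(1,2,3,4)$. The needed residues $\{1,2,3,4\}$ correspond to $\{\pm1,\pm2\}$, and no interval of length four in $\{-3,\dots,3\}$ contains $\pm1,\pm2$ modulo $5$.
\item $\F_7^2$ fails on $(3,-3)$.
\item $\F_5\times\F_7$ fails on $(4,3)$. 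The coordinate $3\in\F_7$ forces the pattern $\{0,1,2,3\}$, and this interval misses $4\equiv -1$ modulo $5$.
\end{enumerate}
By Theorem~\ref{tha2}(d), the first two give $R_2/J(R_2)\cong\F_5^m$ with $m\le 3$ and $R_3/J(R_3)\cong\F_7$. The third shows that the $5$- and $7$-components cannot both occur.

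The components of characteristic $2$ and $3$ have semisimple quotients that are subdirect products of $\F_2$ and $\F_3$, hence tripotent. Their product therefore forms a strongly $2$-nil-clean factor $R_1$. This is where I would lean on the strongly $\pi$-regular description in Theorem~\ref{tha2}, so that $R_1/J(R_1)$ satisfies $x^3=x$ and hence $R_1$ is strongly $2$-nil-clean. This yields the five listed shapes.

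\textbf{Sufficiency.} First I would check that $\F_2^a\times\F_3^b\times\F_5^3$ and $\F_2^a\times\F_3^b\times\F_7$ are weakly strongly $3$-nil-clean. Every interval above covers all residues modulo $2$ and modulo $3$.
\begin{itemize}
\item For $\F_5^3$: each interval omits exactly one residue modulo $5$, namely $4,3,2,1$ respectively. A triple of coordinates misses at least two residues, hence at least one nonzero residue, and the corresponding interval works.
\item For $\F_7$: a single coordinate is covered by some interval.
\end{itemize}
Then, exactly as in the proof of Corollary~\ref{ca2}, the subring $\Z[\overline a]$ of $R/J(R)$ is finite and sits inside such a product. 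Idempotents lift modulo the nilpotent ideal $J(R)\cap\Z[\overline a]$, and commuting idempotents can be lifted to commuting idempotents in the commutative ring $\Z[a]$. This gives the decomposition in $R$.

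I expect the main obstacle to be the necessity bookkeeping around Theorem~\ref{tha2}(d). That statement is phrased per factor $S$, so one must argue separately that products of different characteristics, such as $\F_5\times\F_7$, also fail. One must also argue that the characteristic-$2$ and characteristic-$3$ parts genuinely assemble into a strongly $2$-nil-clean ring rather than merely having tripotent residue rings. I would first try to handle both points through the interval criterion applied to $R/J(R)$ directly.
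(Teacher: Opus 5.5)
Your proposal is correct and follows essentially the same route as the paper. You rule out $(\F_5)^4$, $(\F_7)^2$ and $\F_5\times\F_7$ by explicit elements, deduce necessity from Theorem~\ref{tha2}, and get sufficiency by checking $(\F_2)^a\times(\F_3)^b\times(\F_5)^3$ and $(\F_2)^a\times(\F_3)^b\times\F_7$ and then lifting as in Corollary~\ref{ca2}; your $(1,2,3,4)$ is the paper's $(1,-1,2,-2)$ up to permutation, your $(4,3)$ plays the role of its $(1,-3)$, and your interval criterion simply makes explicit the verifications the paper leaves as ``directly verified''.
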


\begin{proof}
$(1) \Rightarrow (2)$. Since the element $$(1, -3)\in \F_5\times \F_7$$ manifestly does not have the form $\pm e\pm f \pm g$, where $e, f, g$ are idempotents in the ring $\F_5\times \F_7$, it follows that the ring $\F_5\times \F_7$ is not weakly strongly 3-nil-clean. It is, likewise, similarly shown that the ring $\F_7\times \F_7$ is not weakly strongly 3-nil-clean.

Besides, it is directly verified that the element $$(1, -1, 2, -2)\in (\F_5)^4$$ does not have the form $\pm e\pm f \pm g$, where $e, f, g$ are idempotents in the ring $(\F_5)^4$, whence the ring $(\F_5)^4$ is not weakly strongly 3-nil-clean. Then, the implication follows automatically from Theorem \ref{tha2}.

$(2) \Rightarrow (1)$. Let us assume that the rings $R_1$, $R_2$ and $R_3$ satisfy conditions (i), (ii) and (iii) of point (2), respectively. It is directly inspected that the rings $$(\F_2)^{n}\times (\F_3)^{m}\times (\F_5)^k$$ and $$(\F_2)^{n}\times (\F_3)^{m}\times \F_7$$ are weakly strongly 3-nil-clean for arbitrary $n,m\in \N$ and $k\leq 3$.
Thus, using arguments analogous to those given in proving implication $(2) \Rightarrow (1)$ of Corollary \ref{ca2}, we can straightforwardly illustrate that the direct products $R_1\times R_2$ and $R_1\times R_3$ are both weakly strongly 3-nil-clean rings, as pursued.
\end{proof}

\section{Concluding Discussion and Further Work}

In this final section, in regard to the posed above concrete questions and problems, we now comment some further perspectives of the results we had obtained so far, hoping eventually that their generative expansions could be true, thereby hopefully stimulating an intensive research study of the presented subject in the very near future. Explicitly, the more attractive things that we have achieved above suggest to describe those rings for which each element is the sum of a unit and a linear combination of idempotents with coefficients in $\mathbb{Z}$ commuting all together, so the leitmotif here is to ask where or not such rings could be classified in terms of the Jacobson radical and (sub)direct products of already well-characterized classes of rings. So, as a culmination of our recent considerations, we conjecture that this can occur for at least endomorphism and matrix rings happening that their complete description is achievable.

\vskip0.5pc

\noindent{\bf Funding:} The work of the second-named author, A.R. Chekhlov, was supported by the Ministry of Science and Higher Education of Russia (agreement No. 075-02-2025-1728/2) and by the Regional Scientific and Educational Mathematical Center of Tomsk State University.

\end{document}